\numberwithin{equation}{section}
\newtheorem{theorem}[equation]{Theorem}
\newtheorem{proposition}[equation]{Proposition}
\newtheorem{lemma}[equation]{Lemma}
\newtheorem{corollary}[equation]{Corollary}
\theoremstyle{definition}
\newtheorem{rmk}[equation]{Remark}
\newenvironment{remark}[1][]{\begin{rmk}[#1] \pushQED{\qed}}{\popQED \end{rmk}}
\newtheorem{eg}[equation]{Example}
\newenvironment{example}[1][]{\begin{eg}[#1] \pushQED{\qed}}{\popQED \end{eg}}
\newtheorem{defnaux}[equation]{Definition}
\newenvironment{definition}[1][]{\begin{defnaux}[#1]\pushQED{\qed}}{\popQED \end{defnaux}}
\newcommand{\bA}{\mathbf{A}}
\newcommand{\bF}{\mathbf{F}}
\newcommand{\bQ}{\mathbf{Q}}
\newcommand{\bT}{\mathbf{T}}
\newcommand{\bV}{\mathbf{V}}
\newcommand{\fm}{\mathfrak{m}}
\newcommand{\arxiv}[1]{\href{http://arxiv.org/abs/#1}{{\tiny\tt arXiv:#1}}}
\newcommand{\DOI}[1]{\href{http://doi.org/#1}{\color{purple}{\tiny\tt DOI:#1}}}
\newcommand{\stacks}[1]{\cite[\href{http://stacks.math.columbia.edu/tag/#1}{Tag~#1}]{stacks}}
\newcommand{\defn}[1]{\emph{#1}}
\let\ol\overline
\renewcommand{\phi}{\varphi}
\DeclareMathOperator{\im}{im}
\DeclareMathOperator{\Mod}{Mod}
\DeclareMathOperator{\Free}{Free}
\DeclareMathOperator{\Hom}{Hom}
\DeclareMathOperator{\gr}{gr}
\DeclareMathOperator{\init}{init}
\DeclareMathOperator{\Spec}{Spec}
\newcommand{\id}{\mathrm{id}}
\renewcommand{\Vec}{\mathrm{Vec}}
\newcommand{\GL}{\mathbf{GL}}
\DeclareMathOperator{\Sym}{Sym}
\DeclareMathOperator{\Frac}{Frac}
\DeclareMathOperator{\chr}{char}
\DeclareMathOperator{\str}{str}
\DeclareMathOperator{\red}{red}
\DeclareMathOperator{\rad}{rad}
\DeclareMathOperator{\astr}{astr}
\newcommand{\Sh}{\mathrm{Sh}}
\newcommand{\diag}{\mathrm{diag}}
\newcommand{\comment}[1]{}
\title{The geometry of polynomial representations in positive characteristic}
\author{Arthur Bik}
\address{University of Neuch\^atel, Switzerland, and Institute for Advanced Study, Princeton, NJ, USA}
\email{\href{mailto:mabik@ias.edu}{mabik@ias.edu}}
\urladdr{\url{http://arthurbik.nl}}
\author{Jan Draisma}
\address{University of Bern, Switzerland, and Eindhoven University of Technology, The Netherlands}
\email{\href{mailto:jan.draisma@math.unibe.ch}{jan.draisma@math.unibe.ch}}
\urladdr{\url{https://mathsites.unibe.ch/jdraisma/}}
\author{Andrew Snowden}
\address{Department of Mathematics, University of Michigan, Ann Arbor, MI, USA}
\email{\href{mailto:asnowden@umich.edu}{asnowden@umich.edu}}
\urladdr{\url{http://www-personal.umich.edu/~asnowden/}}
\thanks{AB was partially supported by Postdoc.Mobility Fellowship number P400P2\_199196 from the Swiss National Science
Foundation and a grant from the Simons Foundation (816048, LC).
JD was partially supported by the Vici grant 639.033.514 from the Netherlands Organisation for Scientific Research and by Swiss National Science Foundation project grant 200021\_191981.
AS was supported by NSF grant DMS-2301871.}
\date{June 11, 2024}
\begin{document}

\begin{abstract}
A \defn{$\GL$-variety} is a (typically infinite dimensional) variety modeled on the polynomial representation theory of the general linear group. In previous work, we studied these varieties in characteristic~0. In this paper, we obtain results in positive characteristic: for example, we prove a version of Chevalley's theorem on constructible sets. We give an application of our theory to strength of polynomials.
\end{abstract}

\maketitle
\tableofcontents

\section{Introduction}

A \defn{$\GL$-variety} is a (typically infinite dimensional) algebraic variety modeled on the polynomial representation theory of the infinite general linear group $\GL$. These objects have surprising finiteness properties \cite{draisma,BDD}, which have been applied to Stillman's conjecture \cite{DLL,ESS1} and the geometry of tensors \cite{imgclosure,unirat}. In \cite{polygeom}, we established fundamental structural results for $\GL$-varieties in characteristic~0. In this paper, we extend these results to positive characteristic.

\subsection{An application}

Before stating our main results about $\GL$-varieties, we give a concrete application to strength; this application was the main source of motivation for this paper.

Fix a field $K$. Let $f \in K[x_1, \ldots, x_n]$ be a homogeneous form of degree $d>1$. The \defn{strength} of $f$, denoted $\str(f)$, is the minimal $s$ for which there exists an expression
\begin{displaymath}
f = \sum_{i=1}^s g_i \cdot h_i
\end{displaymath}
where $g_i$ and $h_i$ are homogeneous forms over $K$ of degrees $<d$.
This notion was first seriously considered by Schmidt \cite{Schmidt}
(and is therefore sometimes called \defn{Schmidt rank}), who used it in
the context of analytic number theory. Ananyan and Hochster \cite{AH}
showed that polynomials of high strength have various nice algebraic
properties, and this formed the basis of their proof of Stillman's
conjecture (they also introduced the term ``strength'').  Strength also
plays an important role in algebraic combinatorics.  Indeed, over finite
fields $K$ of characteristic $>d$, the strength of a degree-$d$ form
$f$ is closely related to its \defn{analytic rank}, which measures the
statistical bias of $f$ regarded as a function $K^n \to K$.  This line of
research goes back to Green and Tao \cite{GT}, a polynomial upper bound
for strength in terms of analytic rank was found by Mili{\'c}evi{\'c}
\cite{M}, and this was further improved by Cohen, Moshkovitz, and Zhu to
(almost) linear bounds \cite{CM,MZ}. The connection between strength
and bias has been exploited to establish further algebraic properties
of high-strength forms by Kazhdan, Lampert, Polishchuk, and Ziegler
\cite{KaZ,KLP,LZ1,LZ2}. Interestingly, this is an entirely different
route to such algebraic properties than the one we take here. 

The strength of a polynomial can drop over an extension field. For example, $f=x_1^2+x_2^2$ has strength~2 over the real numbers, but strength~1 over the complex numbers due to the factorization
\begin{displaymath}
f = (x_1+ix_2)(x_1-i x_2), \qquad i = \sqrt{-1}.
\end{displaymath}
We define the \defn{absolute strength} of $f$, denoted $\astr(f)$, to be the strength of $f$ over the algebraic closure of $K$. We thus have $\astr(f) \le \str(f)$.

Certain results, especially those of a geometric nature, require large
absolute strength. For example, \cite[Theorem~F]{AH} shows that if $f$
has large absolute strength then the singular locus of $f=0$ has large
codimension. However, absolute strength can be hard to get a handle
on algebraically. It is therefore desirable to have a lower bound for
absolute strength in terms of strength; i.e., we would like to know
that strength cannot drop too much by passing to an extension. Such a
result has been established when the characteristic is zero or large (see
\cite{BDLZ}; special cases were established earlier in \cite{LZ2,KP}):

\begin{theorem} \label{thm:astr}
Suppose that $\chr(K)=0$ or $\chr(K)>d$. Given any $s$ there is some $s'$
(depending only on $d$ and $s$, not on $K$) such that if $f \in K[x_1,
\ldots, x_n]$ is a homogeneous form of degree $d$ with $\str(f)>s'$
then $\astr(f)>s$. Furthermore, for fixed $d$, the minimal such $s'$
 is bounded by a polynomial function of $s$.
\end{theorem}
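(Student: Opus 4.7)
The plan is to interpret $\str(f) \le s$ and $\astr(f) \le s$ as $K$-points, respectively $\ol K$-points, of the image of a single morphism of $\GL$-varieties, and then to exploit a Chevalley-type theorem together with $\GL$-noetherianity to bound the gap. Concretely, let $P_e$ denote the $\GL$-variety of homogeneous forms of degree $e$. For each tuple $((a_1, b_1), \ldots, (a_s, b_s))$ of pairs of positive integers with $a_i + b_i = d$, set $Y_{\vec a, \vec b} = \prod_{i=1}^s (P_{a_i} \times P_{b_i})$, and define
$$\phi_s \colon Y_s := \bigsqcup_{(\vec a, \vec b)} Y_{\vec a, \vec b} \longrightarrow P_d, \qquad ((g_i, h_i))_{i=1}^s \longmapsto \sum_{i=1}^s g_i h_i.$$
Then $\str(f) \le s$ iff $f \in \phi_s(Y_s(K))$, while $\astr(f) \le s$ iff $f \in \phi_s(Y_s(\ol K))$.

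By Chevalley's theorem for $\GL$-varieties in characteristic zero from \cite{polygeom}, the image $A_s := \phi_s(Y_s(\ol K))$ is a constructible $\GL$-subvariety of $P_d$, i.e.\ a finite disjoint union of locally closed $\GL$-subvarieties $Z_1, \ldots, Z_N$. For each stratum $Z_i$ I would produce a further morphism of $\GL$-varieties $\psi_i \colon W_i \to P_d$ whose image contains $Z_i$ and which factors through an \emph{explicit} sum-of-products expression with at most $s'(i)$ terms, in such a way that every $K$-rational point of $Z_i$ lifts to a $K$-rational point of $W_i$. Setting $s' := \max_i s'(i)$ then gives the existence statement: any $f \in P_d(K)$ with $\astr(f) \le s$ lies in some $Z_i(K)$, hence lifts to $W_i(K)$, hence satisfies $\str(f) \le s'$.

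The main obstacle is the construction of the $K$-rational sections. In characteristic zero or $\chr(K) > d$, I would rely on complete reducibility of polynomial $\GL$-representations, and on the resulting smoothness and splittings of the natural $\GL$-equivariant affine bundles that appear over each stratum, to split fibers of $\phi_s$ over $K$. In small characteristic such splittings can fail, and producing an analogous lift is precisely the sort of obstruction motivating the positive-characteristic theory developed in the present paper.

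Finally, for the polynomial bound on $s'$ as a function of $s$ with $d$ fixed, one must track the quantitative structure of the Chevalley stratification. Since $Y_s$ is an $s$-fold product of a bounded collection of building blocks $\{P_a \times P_b : a + b = d, \ a, b > 0\}$, one expects both the number $N$ of strata and each bound $s'(i)$ to grow polynomially in $s$; establishing this explicitly, by following the proof of Chevalley and recording the complexity of each resolution step, is the last and most delicate piece of the plan, and is the refinement that required the successively finer geometric control developed in \cite{LZ2, KP, BDLZ}.
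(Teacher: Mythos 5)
First, a point of orientation: the paper does not prove Theorem~\ref{thm:astr} at all; it is quoted from \cite{BDLZ} (with special cases from \cite{LZ2,KP}), and the paper's own contribution is the semi-perfect analogue Theorem~\ref{thm:astr2}, proved in \S\ref{s:strength} by encoding ``strength $\le s$'' as the image of a morphism $\theta_s \colon Y^s \to X$ of $\GL$-varieties and then invoking the lifting theorem (Theorem~\ref{thm:lift}, Corollary~\ref{cor:lift}), which rests on the decomposition theorem. Your setup of $\phi_s$ is the same encoding, so the qualitative skeleton of your plan is in the spirit of the paper's proof of Theorem~\ref{thm:astr2}, but the proposal has genuine gaps as a proof of Theorem~\ref{thm:astr}. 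The central one is the claim that every $K$-rational point of a stratum $Z_i$ lifts to a $K$-rational point of a parametrizing variety $W_i$. This is false in general, even in characteristic zero: after decomposing, the ``finite-dimensional'' (0-elementary) part of such a map is merely a smooth surjection of finite-dimensional varieties, and smooth surjections do not admit $K$-point lifts of $K$-points (think of squaring on $\mathbb{G}_m$ over $\bQ$). What is true, and what the paper's method delivers, is a lift over an extension $M/K$ of bounded degree; to conclude one then needs the descent inequality $\str_K(f) \le [M:K]\cdot \str_M(f)$ (the second lemma of \S\ref{s:strength}), which is entirely absent from your argument. Without that lemma, bounded-degree lifts give you nothing about $\str_K(f)$.

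Two further problems. (1) The polynomial bound on $s'$ cannot be extracted from Chevalley-type stratifications: both the Chevalley theorem of \cite{polygeom} and the decomposition machinery here rest on $\GL$-noetherianity, which is non-effective, so ``one expects the number of strata to grow polynomially in $s$'' has no support; indeed the authors state explicitly that their method yields no quantitative bound even in the semi-perfect positive-characteristic setting. The polynomial bound in Theorem~\ref{thm:astr} comes from the quantitative arguments of \cite{BDLZ} (building on the analytic-rank circle of ideas), not from this geometric route. (2) Your Chevalley input is characteristic-zero only, while the theorem also covers all $K$ with $\chr(K)>d$, including non--semi-perfect ones such as $\bF_p(t_1,t_2,\ldots)$ with $p>d$; the positive-characteristic machinery of this paper lifts points only up to Frobenius (producing $q$-th roots, hence the semi-perfectness hypothesis in Corollary~\ref{cor:lift}), and you would need a separate argument that $q=1$ can be taken when $\chr(K)>d$, as well as an argument (e.g.\ working over the prime field) that the resulting $s'$ depends only on $d$ and $s$ and not on $K$. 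None of this is addressed, so the proposal is a plausible outline of the qualitative statement but not a proof of the theorem as stated.
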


We note in particular that the $s$ in the above theorem is independent of the number of variables $n$; this is really the main point. The conclusion in the theorem is false (in general) when $\chr(K)=p>0$ and $d \ge p$, as the following example shows:

\begin{example} \label{ex:nonsemiperfect}
Let $K=\bF_p(t_1,t_2,\ldots)$ and let $f=t_1x_1^p+\cdots+t_nx_n^p$. Then $\str(f) \ge n/2$, but $\astr(f)=1$. See \cite[Example~3.4]{BDS1} for details.
\end{example}

Recall that a field $K$ is \defn{perfect} if $\chr(K)=0$, or
$\chr(K)=p$ and $K=K^p$. We say that a field $K$ is
\defn{semi-perfect} if $\chr(K)=0$, or $\chr(K)=p$ and
$[K:K^p]<\infty$. The function field of a (finite dimensional)
algebraic variety over a finite field is semi-perfect, while the field
$K$ in the above example is not semi-perfect. An important application
to strength of our more general results below shows that the phenomenon in Example~\ref{ex:nonsemiperfect} is the only obstruction to extending Theorem~\ref{thm:astr}:

\begin{theorem} \label{thm:astr2}
Suppose $K$ is semi-perfect and $d>1$ is arbitrary. Given any $s$ there is some~$s'$ (depending only on $K$, $d$, and $s$) such that if $f \in K[x_1, \ldots, x_n]$ is a homogeneous form of degree $d$ with $\str(f)>s'$ then $\astr(f)>s$.
\end{theorem}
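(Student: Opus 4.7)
The plan is to recast strength in the language of $\GL$-varieties and invoke the Chevalley-type theorem established in the body of this paper. Let $X$ denote the (infinite-dimensional) $\GL$-variety of degree-$d$ forms, and for each $t \ge 1$ consider the $\GL$-morphism
\[
\mu_t\colon Y_t \;=\; \bigsqcup_{0<a<d}\bigl(\bA(\Sym^a V^*)\times\bA(\Sym^{d-a}V^*)\bigr)^t \;\longrightarrow\; X,\qquad (g_i,h_i)_{i=1}^t \mapsto \sum_i g_ih_i.
\]
By the positive-characteristic $\GL$-Chevalley theorem of this paper, the set-theoretic image $C_t := |\mu_t(Y_t)|$ is a constructible $\GL$-subset of $X$ defined over $K$, and for $f \in X(K)$ one has $f \in C_t(K)$ if and only if the base change $f_{\bar K}$ lies in the functor image $\mu_t(Y_t(\bar K))$, that is, if and only if $\astr(f)\le t$. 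The theorem thus amounts to showing $C_s(K)\subseteq\mu_{s'}(Y_{s'}(K))$ for some $s' = s'(K,d,s)$.

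The second step is an elementary descent calculation. Suppose I can produce, for each $f\in C_s(K)$, a lift to a point of $Y_s$ defined over a finite extension $L/K$ with $[L:K]\le N = N(K,d,s)$. Writing $f = \sum_{i=1}^s g_ih_i$ with $g_i,h_i\in L[x]$, fixing a $K$-basis $e_1=1,e_2,\ldots,e_N$ of $L$, expanding $g_i = \sum_j e_jg_{ij}$ and $h_i = \sum_k e_kh_{ik}$ with $g_{ij},h_{ik}\in K[x]$, and using the structure constants $e_je_k = \sum_\ell c_{jk\ell}e_\ell$ together with $f\in K[x]$, comparison of the $e_1$-coefficient in $f=\sum_i g_ih_i$ gives
\[
f \;=\; \sum_{i,j,k} c_{jk1}\,g_{ij}\,h_{ik},
\]
a sum of at most $sN^2$ products of $K$-forms of degrees $<d$. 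Hence $\str(f)\le sN^2$, proving the theorem with $s'=sN^2$.

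The main obstacle is producing such an $L$ with $[L:K]$ uniformly bounded. The plan is to apply the Chevalley decomposition $C_s = \bigcup_\alpha (Z_\alpha\setminus Z_\alpha')$ into finitely many locally closed $\GL$-subsets defined over $K$. Over the generic point $\eta_\alpha$ of each $Z_\alpha$ the fiber of $\mu_s$ is a nonempty $\kappa(\eta_\alpha)$-scheme whose closed points have residue fields of bounded complexity: I expect that the separable degree is bounded by some $e = e(d,s)$ and the purely inseparable exponent by some $m = m(d,s)$, both flowing from the explicit shape of $\mu_s$ together with the structural results for $\GL$-morphisms developed in this paper. Spreading out, a $K$-point $f\in Z_\alpha(K)\setminus Z_\alpha'(K)$ then lifts to a point of $Y_s$ defined over an extension $L$ with separable part of degree $\le e$ and purely inseparable part contained in $K^{p^{-m}}$. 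The hypothesis that $K$ is semi-perfect enters exactly here: it forces $K^{p^{-m}}$ to be a finite extension of degree $[K:K^p]^m$, and hence $[L:K]\le e\cdot[K:K^p]^m$. In Example~\ref{ex:nonsemiperfect} the inseparable exponent $m=1$ already suffices, but $[K:K^p]=\infty$ destroys the bound, showing that semi-perfectness is used in an essential way.
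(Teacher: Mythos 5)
Your overall architecture coincides with the paper's: encode strength via the multiplication morphism $\mu_s\colon Y_s\to X$ of $\GL$-varieties (the paper's $\theta_s$), lift a $\bar K$-witness for $\astr(f)\le s$ to a point of $Y_s$ defined over an extension $L/K$ of degree bounded only in terms of $K$, $d$, $s$, and then descend to $K$ by expanding in a $K$-basis of $L$ (your bound $sN^2$ versus the paper's $\str_K(f)\le N\cdot\str_L(f)\le Ns$ is immaterial). The descent computation and your identification of exactly where semi-perfectness must enter are both correct.

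The gap is the middle step, which is the entire technical heart of the theorem: you never prove that a $K$-point in the image of $\mu_s$ lifts to a point of $Y_s$ over an extension of uniformly bounded degree; you only say you ``expect'' the residue fields in the fibers over generic points to have bounded separable degree and bounded inseparable exponent, and that this ``spreads out.'' Chevalley's theorem (Theorem~\ref{thm:chevalley}) gives constructibility of the image but no control whatsoever over fibers or fields of definition of lifts; in this infinite-dimensional setting even your asserted equivalence $f\in C_t(K)\Leftrightarrow\astr(f)\le t$ is not automatic, since a nonempty fiber over a $K$-point of a non-finite-type scheme need not have a $\bar K$-point. Moreover, a generic-point-plus-spreading-out argument only yields the bound on a dense open subset of each $Z_\alpha$, not at every $K$-point of $Z_\alpha\setminus Z_\alpha'$, so at minimum a Noetherian induction is needed, and more importantly one needs actual structure theory for the map on each stratum. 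This is precisely what the paper supplies and what your proposal would have to reconstruct: the decomposition theorem (Theorem~\ref{thm:decomp}) stratifies $\mu_s$ into locally $F$-elementary maps, and the lifting theorem (Theorem~\ref{thm:lift}) with Corollary~\ref{cor:lift} --- proved by analyzing lifts through $0$-elementary maps (finite extension of bounded degree), purely elementary maps (no extension needed), and $F$-equivalences (a $q$-th root extension, finite of bounded degree exactly because $K$ is semi-perfect) --- delivers the uniform bound on $[M:K]$. Either invoke these results explicitly or prove an equivalent statement; as written, the key quantitative input is assumed rather than established.
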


Example~\ref{ex:nonsemiperfect} shows that one cannot bound $s'$ in
terms of $s$ and $d$ independently of $K$. It is conceivable, however,
that $s'$ can be bounded by a polynomial in $s$ when both $K$ and $d$
are fixed. We do not currently have such a quantitative result.

In fact, we prove a stronger result than Theorem~\ref{thm:astr2}: we show that if $\astr(f)=s$ then there is a finite extension $K'/K$, whose degree can be controlled by $s$ and $d$, such that $f$ has strength $s$ over $K'$. This result is new even in characteristic~0.

Theorem~\ref{thm:astr2} plays a critical role in \cite{BDS1} when the field has positive characteristic.

\subsection{Main results}

Fix an infinite field $K$; in the body of the paper we also allow
finite fields, but this requires some additional care. Let
$\GL=\bigcup_{n \ge 1} \GL_n(K)$ be the infinite general linear group
and let $\bV=\bigcup_{n \ge 1} K^n=\langle e_1, e_2,\ldots \rangle_K$ be its standard representation. A
representation of $\GL$ is \defn{polynomial} if it occurs as a
subquotient of a direct sum of tensor powers of $\bV$. Given a
polynomial representation $P$ of finite length in the abelian category
of polynomial representations, let $\bA(P)$ be the scheme $\Spec(\Sym(P))$, equipped with its natural $\GL$-action. An affine \defn{$\GL$-variety} is a reduced scheme equipped with an action of $\GL$ that admits a closed $\GL$-equivariant embedding into some $\bA(P)$. These are our main objects of study.

In \cite{polygeom}, we studied $\GL$-varieties in characteristic~0. We
defined a $\GL$-variety to be \defn{elementary} if it has the form $B
\times \bA(P)$, where $B$ is a finite dimensional variety (with
trivial $\GL$-action). We also define a notion of elementary
morphisms, and local versions of these concepts. The main theme of
\cite{polygeom} is that these elementary objects are the building
blocks of general objects. A bit more precisely, one of the main
results of \cite{polygeom} is the decomposition theorem, which says
that if $f \colon Y \to X$ is a map of $\GL$-varieties, then $Y$ and $X$ can be stratified by locally elementary varieties such that $f$ induces locally elementary maps on the strata. This is a very powerful result, and has immediate applications, such as a version of Chevalley's theorem for $\GL$-varieties.

In this paper, we follow a similar plan. In particular, one of our main results is a version of the decomposition theorem in positive characteristic (Theorem~\ref{thm:decomp}). However, there are two main
difficulties we have to overcome. First, in positive characteristic
there are interesting ``twisted'' affine spaces, and we must allow these
in our class of elementary varieties. And second, many statements only
hold ``up to Frobenius.'' For example, in our notion of locally elementary
maps we allow for $F$-equivalences (see \S \ref{ss:frob}).

\subsection{General coefficient rings} \label{ss:genco}

It is not difficult to extend the results of this paper to
$\GL$-varieties over arbitrary noetherian base rings. The basic idea
is the following. Suppose $X$ is a $\GL$-variety over a noetherian
ring $k$, and one wants to prove the decomposition theorem in this
setting. First, reduce to the case of a domain, which is elementary.
Then consider the base change of $X$ to $\Frac(k)$. The decomposition
theorem holds there by the results of this paper. One then spreads out
the decomposition to $k[1/f]$ for some non-zero $f$. Finally, one
passes to $k/(f)$ and runs the same argument. The process terminates
since $\Spec(k)$ is noetherian. There are some technicalities in this
process, e.g.~in the definition of polynomial functors; but these are
dealt with in \cite{BDD}. 

\subsection{Module theory}

In this paper and its predecessor \cite{polygeom}, we study $\GL$-algebras geometrically. One can also study these algebras algebraically, through their module theory. There has been much work on this in characteristic~0, e.g., \cite{symc1, symu1, Snowden}. Only recently have the first serious results been obtained in positive characteristic \cite{Ganapathy1,Ganapathy2}. We hope to see more work in this direction in the future.

\subsection{Outline}

In \S \ref{s:bg}, we recall some background material. In \S
\ref{s:torsor}, we discuss torsors for vector groups; this material
plays an important role in \S \ref{s:embed}, where we prove the
embedding theorem. In \S \ref{s:elem}, we introduce elementary
varieties and maps, and establish numerous basic results about them.
In \S \ref{s:shift}, we prove our main structural theorems for
$\GL$-varieties---the shift, unirationality, and decomposition
theorems---and give some consequences. Finally, in \S
\ref{s:strength}, we prove our application to strength (Theorem~\ref{thm:astr2}).

\subsection{Notation and terminology}

Some terminology and notation:
\begin{itemize}
\item A ``characteristic power'' is~1 if $\chr(K)=0$, or a power of $p$ if $\chr(K)=p>0$.
\item We prefix certain properties with the symbol $F$ to indicate that they hold ``up to Frobenius.'' For instance, a ring map is $F$-injective if every element of the kernel is killed by a power of Frobenius. We use the term $F$-equivalence for a map that is an isomorphism up to Frobenius. See \S \ref{ss:frob}.
\item A \defn{prevariety} is a scheme that is of finite type over $K$.
We similarly define \defn{$\GL$-prevariety}. 
\item A \defn{simple open} of an affine $\GL$-variety $X$ is an open set of the form $h \ne 0$, where $h$ is a $\GL$-invariant function on $X$. 
\item We use $\otimes_R$ to denote tensor products over a
(commutative, unital) ring $R$, and usually abbreviate $\otimes_R$ to
$\otimes$. Similarly, we use $\Hom_R(-,-)$ to denote the $R$-module of
$R$-module homomorphisms, and usually abbreviate $\Hom_K$ to $\Hom$.
\end{itemize}

We list a few important pieces of notation:
\begin{description}[align=right,labelwidth=2cm,leftmargin=!]
\item [$K$] the base field;
\item [$p$] the characteristic of $K$, either~0 or a prime number;
\item [$q$] a characteristic power (1 if $p=0$, and a power of $p$ if
$p$ is prime);
\item [$G(n)$] the subgroup $\GL_{\infty-n}$ of $\GL$ (\S \ref{ss:polyrep})
\item [$\bA(P)$] the $\GL$-variety associated to the polynomial
representation $P$;
\item [$\Sym^d(V)$] the $d$-th symmetric power of $V$; and 
\item [$\Sym(V)$] the symmetric algebra $\bigoplus_{d=0}^\infty
\Sym^d(V)$ on $V$.
\end{description}

\subsection*{Acknowledgments}

The third author thanks Karthik Ganapathy for helpful discussions.

\section{Background} \label{s:bg}

In this section, we collect various pieces of background material: we recall material on polynomial functors and representations, define $\GL$-algebras and $\GL$-varieties, and discuss some aspects of the Frobenius map in positive characteristic.

\subsection{Polynomial functors} \label{ss:poly}

Let $\Vec_K$ denote the category of all $K$-vector spaces. Covariant functors $\Vec_K \to \Vec_K$ form an abelian category, in which the morphisms are natural transformations. We write $P\{V\}$ for the value of a functor on a vector space $V$. An example of such a functor is the $d$th tensor power functor $\mathbb{T}^d$, defined by $\mathbb{T}^d\{V\}=V^{\otimes d}$. We define a \defn{naive polynomial functor} to be a functor $\Vec_K \to \Vec_K$ that is isomorphic to a subquotient of a (possibly infinite) direct sum $\bigoplus_{i \in I} \mathbb{T}^{d_i}$. If $K$ is infinite, naive polynomial functors are exactly the objects we want to use. However, if $K$ is finite then we will require a more sophisticated notion. The purpose of \S \ref{ss:poly} is to explain this idea.

We first recall the type of problem that occurs when $K$ is finite.
Essentially, we want polynomial functors to be compatible with
extending the coefficient field $K$. It turns out that naive
polynomial functors automatically satisfy this when $K$ is infinite,
but not when $K$ is finite. For instance, if $K=\bF_2$ then the
identity functor $V \mapsto V$ is a subobject of the symmetric square
$V \mapsto \Sym^2 V$ via the natural transformation $V \mapsto \Sym
V,\ v \mapsto v^2$, but this does not hold over any proper extension
field of $\bF_2$.

The category of finite dimensional vector spaces is enriched over the category of varieties: this means that if $U$ and $V$ are finite dimensional vector spaces then $\Hom(U,V)$ is an algebraic variety (it is simply an affine space). One way to enforce the compatibility with base change, in this setting, is to work with enriched functors. Such a functor $P$ assigns to vector spaces $V$ and $U$ a map of varieties
\begin{displaymath}
\Hom(U,V) \to \Hom(P\{U\}, P\{V\}),
\end{displaymath}
whereas an ordinary functor only gives a function on the $K$-points of these varieties. This leads to the notion of a \defn{strict polynomial functor}, first introduced in \cite{FS}.

We will need to work with functors that are both defined on infinite
dimensional spaces, and take values in infinite dimensional spaces
(even on finite dimensional input). The enriched functor approach becomes
technically more difficult in this setting, and so we give an alternative
definition. This definition is inspired by, but slightly different from,
the definition from \cite{Touze} via polynomial laws. In what follows,
we let $\Mod_R$ denote the category of $R$-modules and $\Free_R$ the
category of free $R$-modules (of possibly infinite rank).

\begin{definition}
A \defn{polynomial functor} $P$ over $K$ consists of the following data:
\begin{itemize}
\item for each $K$-algebra $R$, a covariant functor $P_R \colon \Free_R \to \Mod_R$; and
\item for each $K$-algebra homomorphisms $R \to S$, a natural isomorphism
\begin{displaymath}
\iota_{R \to S} \colon S \otimes_R P_R(-) \to P_S(S \otimes_R -)
\end{displaymath}
of functors $\Free_R \to \Mod_S$.
\end{itemize}
There is one axiom we impose on this data: given $K$-algebra homomorphisms $R \to S \to T$
and $V \in \Free_R$ the following diagram must commute:
\begin{displaymath}
\xymatrix{
T \otimes_{S} (S \otimes_R P_R\{V\}) \ar[rrrr]^{\id_{T} \otimes
\iota_{R \to S}} \ar[d]_{\cong} &&&& T \otimes_{S} P_{S}\{S
\otimes_R V\} \ar[d]^{\iota_{S \to T}}\\
T \otimes_R P_R\{V\} \ar[rr]_{\iota_{R \to T}} &&
P_{T}\{T \otimes_R V\} 
&& P_{T}\{T
\otimes_S (S \otimes_R V)\}. \ar[ll]^{\cong}
} \\[-2\dp\strutbox]
\end{displaymath}
A \defn{morphism} $P \to Q$ of polynomial functors is given
by a family of natural transformations $(\phi_R \colon P_R \to Q_R)_R$ such that
for all homomorphisms of $K$-algebras $R \to S$ and all $V \in \Free_R$
the following diagram commutes (by abuse of notation, we do not make
the dependence of $\iota$ on $P,Q$ explicit):
\[ \xymatrix{
S \otimes_R P_R\{V\} \ar[r]^{\iota_{R \to S}} \ar[d]_{\id_S \otimes
\phi_R} & P_S(S \otimes V) \ar[d]^{\phi_S} \\
S \otimes_R Q_R\{V\} \ar[r]^{\iota_{R \to S}} & Q_S(S \otimes V). }
\]
In this way, polynomial functors over $K$ form a category. One easily sees (using Remark~\ref{rmk:poly-free}) that is abelian, and carries a natural $K$-linear structure.
\end{definition}

\begin{example}
The prototypical example of a polynomial functor is the $d$-th tensor power $\bT^d$: here $\bT^d_R\{V\}=V^{\otimes d}$, and $\iota_{R \to S} \colon S \otimes_R V^{\otimes d} \to (S \otimes_R V)^{\otimes d}$ is the natural isomorphism. In each case, the tensor power is taken over the natural ring.
\end{example}

\begin{remark} \label{rmk:poly-free}
Suppose $P$ is a polynomial functor and $V$ is a free $R$-module. Then $V \cong R \otimes_K V_0$ for some $K$-vector space $V_0$. By compatibility with base change, we have $P_R(V) \cong R \otimes_K P_K(V_0)$, and so $P_R(V)$ is a free $R$-module. We thus see that $P_R$ necessarily takes values in the category $\Free_R$ of free $R$-modules.
\end{remark}

Let $P$ be a polynomial functor and let $x \in P_R\{V\}$. We say that $x$ is \defn{homogeneous of degree $d$} if for every $K$-algebra homomorphism $\phi \colon R \to S$ and every $t \in S$, we have $P_S\{t \cdot \id\}(x_S)=t^d \cdot x_S$, where here $x_S=\iota_{R \to S}(1 \otimes x)$ and $\id$ is the identity map on $S \otimes_R V$. One easily sees that the set of all such elements forms a polynomial subfunctor $P_d$ of $P$.

We have the following proposition summarising the relation between polynomial 
functors in the sense above to strict polynomial functors.

\begin{proposition}
Let $P$ be a polynomial functor over $K$. Then:

\begin{enumerate}

\item $P$ is a subquotient of a (possibly infinite) direct sum $\bigoplus_{i \in I} \bT^{d_i}$ of tensor power functors.

\item $P = \bigoplus_{d \ge 0} P_d$ is the direct sum of its homogeneous components.

\item $P$ is a direct limit of finite length objects. If $P$ itself has
finite length, then $P_K$ sends finite dimensional $K$-spaces to finite
dimensional $K$-spaces.

\item If $P_K$ maps finite dimensional $K$-spaces to finite dimensional
$K$-spaces, then there exists a unique strict polynomial functor $Q$
in the sense of \cite{FS} that agrees with $P$ in the following sense:
$Q\{U\}=P_K\{U\}$ for every $U \in \Vec_K$, and for any $K$-algebra
$S$ and any $U,V \in \Vec_K$ and any $\psi \in S \otimes \Hom(U,V)
\cong \Hom_S(S \otimes U, S \otimes V)$
the following diagram commutes:
\[ 
\xymatrix{
S \otimes P_K\{U\}=S \otimes Q\{U\} \ar[rr]^{Q\{\psi\}}
\ar[d]_{\iota_{K \to S}} && S \otimes Q\{V\} = S
\otimes P_K\{V\} \ar[d]^{\iota_{K \to S}} \\
P_S\{S \otimes U\} \ar[rr]_{P_S\{\psi\}} && P_S\{S \otimes V\};
}
\]
here $Q\{\psi\}$ stands for the map on $S$-points induced by the
morphism $Q\{\psi\}$. 
The assignment $P \mapsto Q$ is an equivalence of abelian categories
between the full subcategory of polynomial functors in our sense that
map finite dimensional $K$-spaces to finite dimensional $K$-spaces and
the category of strict polynomial functors.
\end{enumerate}
\end{proposition}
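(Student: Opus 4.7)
The plan is to prove the four parts in the order (b), (d), (c), (a): the homogeneous decomposition is foundational, the equivalence with strict polynomial functors gives clean access to classical Friedlander--Suslin tools, and the last two parts then follow by translation.

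For (b), I would use a universal scaling argument. For a $K$-algebra $R$ and $V \in \Free_R$, consider the base change along $R \to R[t]$ and the $R[t]$-linear scaling $\mu_t \colon R[t] \otimes_R V \to R[t] \otimes_R V$. Applying $P_{R[t]}$ and conjugating with $\iota_{R \to R[t]}$ gives an $R[t]$-linear endomorphism $\Theta$ of $R[t] \otimes_R P_R\{V\}$, and for each $x \in P_R\{V\}$ expanding $\Theta(1 \otimes x) = \sum_{d \ge 0} t^d e_d(x)$ (a finite sum automatically) defines $R$-linear maps $e_d \colon P_R\{V\} \to P_R\{V\}$. Functoriality applied to $\mu_1 = \id$ yields $\sum_d e_d = \id$, while functoriality applied to $\mu_{st} = \mu_s \circ \mu_t$ inside $R[s,t]$ gives the orthogonality $e_a \circ e_b = \delta_{a,b}\, e_a$ by comparing coefficients of $s^a t^b$. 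Checking naturality in $V$, in $R$, and under morphisms of polynomial functors, the subfunctors $P_d := \im(e_d)$ are seen to be precisely the subfunctors of homogeneous elements of degree $d$ by reading off the definition with $S = R[t]$.

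For (d), given $P$ with $P_K$ sending finite-dim to finite-dim, set $Q\{V\} = P_K\{V\}$ on finite-dim $V$. For each $K$-algebra $S$ and each $S$-point $\psi \in S \otimes \Hom(U, V) \cong \Hom_S(S \otimes U, S \otimes V)$, the map $P_S(\psi)$ composed with $\iota_{K \to S}$ yields an $S$-linear map $S \otimes Q\{U\} \to S \otimes Q\{V\}$ that is natural in $S$. By representability of the (now finite-dimensional!) functor $S \mapsto \Hom_S(S \otimes Q\{U\}, S \otimes Q\{V\})$ by the affine $K$-scheme $\Hom(Q\{U\}, Q\{V\})$, this family is the $S$-point description of a morphism of $K$-schemes $\Hom(U, V) \to \Hom(Q\{U\}, Q\{V\})$; the coherence axiom on $\iota$ upgrades this to functoriality of $Q$ on composition. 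The inverse sends a strict polynomial functor $Q$ to the polynomial functor with $P_R\{R \otimes_K V_0\} = R \otimes_K Q\{V_0\}$ on finite-dim $V_0$, extended by filtered colimit; the base-change isomorphisms and functoriality are supplied by the polynomial-law structure of $Q$. One checks these constructions are mutually inverse.

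For (c) and (a), reduce to $P$ homogeneous of degree $d$ via (b) and fix $x \in P_K\{K^n\}$. For (c), the subfunctor $\langle x \rangle \subseteq P$ generated by $x$ has $\langle x \rangle_K\{K^m\}$ equal to the $K$-span of coefficients of $P_S(\psi)(x_S)$, where $\psi$ is the universal $n \times m$ matrix with entries $y_{ij}$ in $S = K[y_{ij}]$; since $P$ has degree $d$, this is a homogeneous polynomial of degree $d$ in the $y_{ij}$, whose coefficient space has dimension at most $\binom{nm + d - 1}{d}$. Thus $\langle x \rangle$ is finite type, so by (d) it corresponds to a finite-dimensional module over the Schur algebra $S(m, d)$ for $m \gg d$ and is of finite length; writing $P = \varinjlim_x \langle x \rangle$ proves (c). For (a) it suffices to exhibit each $\langle x \rangle$ as a subquotient of a direct sum of $\bT^d$'s: via (d) one uses the projective generators $\Gamma^{d, V}\{W\} = \Gamma^d \Hom(V, W)$, which satisfy $\Hom(\Gamma^{d, V}, \langle x \rangle) \cong \langle x \rangle\{V\}$ by Yoneda, to produce a surjection $\bigoplus_{(V_i, x_i)} \Gamma^{d, V_i} \twoheadrightarrow \langle x \rangle$; and each $\Gamma^{d, V}$ embeds in a finite direct sum of $\bT^d$'s via the decomposition $\Gamma^d(W^{\oplus n}) = \bigoplus_{|\alpha| = d} \Gamma^{\alpha_1} W \otimes \cdots \otimes \Gamma^{\alpha_n} W \hookrightarrow (W^{\otimes d})^{\oplus N}$. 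Summing these subquotient descriptions over $x$ gives (a). The main obstacle I foresee lies in (d): verifying that the $S$-linear maps $P_S(\psi)$, as $S$ and $\psi$ vary, really assemble into a morphism of $K$-schemes rather than merely a pointwise-natural family; this is precisely where the naturality of $\iota_{R \to S}$ in the $K$-algebra variable is essential, and it is what licenses the subsequent use of classical strict-polynomial-functor technology in (a) and (c).
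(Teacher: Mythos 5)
Your proposal is sound, and on the one part the paper actually proves --- part (d) --- you follow essentially the same route: your ``representability of $S \mapsto \Hom_S(S \otimes Q\{U\}, S \otimes Q\{V\})$ plus Yoneda'' argument is exactly the abstract form of the paper's construction, which applies $P$ to the universal matrix $\phi$ over $R=K[x_{ij}]$ (the universal $S$-point of the affine space $\Hom(U,V)$) and reads off polynomial entries; your inverse construction (set $P_R\{R \otimes V_0\} = R \otimes Q\{V_0\}$ and extend by filtered colimits) is also the paper's. The paper offers no proof of (a)--(c), so your arguments there are genuine additions rather than alternatives: the scaling-idempotent argument over $R[t]$ and $R[s,t]$ for (b), the coefficient-span bound and the Friedlander--Suslin/Schur-algebra reduction for (c), and the surjection from divided-power projectives $\Gamma^{d,V}$ embedded in finite sums of $\bT^d$ for (a) are all correct standard routes, and they buy an explicit, self-contained justification of statements the paper merely asserts.

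One step you gloss over deserves an explicit argument: in (c) and (a) you write $P = \varinjlim_x \langle x \rangle$ over elements $x \in P_K\{K^n\}$, but the definition of a polynomial functor contains no continuity axiom, so it is not immediate that every element of $P_K\{V\}$ for infinite-dimensional $V$ (hence every element of every $P_R\{W\}$, via $\iota$) lies in a subfunctor generated by elements at finite-dimensional spaces. This can be fixed with the same universal-point technique you use elsewhere: for $y \in P_K\{V\}$ with basis $(e_i)_{i \in I}$, apply $P$ over $S = K[t_i : i \in I]$ to the diagonal map $e_i \mapsto t_i e_i$; the result is a finite sum of monomials in finitely many $t_i$, and specializing the remaining $t_i$ to $0$ (so the map factors through the finite-dimensional span $W$ of the involved $e_i$) shows each coefficient, hence $y$ itself after setting all $t_i = 1$, lies in the image of $P_K\{W\} \to P_K\{V\}$. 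With that lemma in place, your exhaustion of $P$ by the finitely generated subfunctors $\langle x \rangle$, and therefore parts (a) and (c), go through as you describe (minor points: $m \ge d$ suffices for the Schur algebra, and a single surjection $\Gamma^{d,K^n} \twoheadrightarrow \langle x \rangle$ suffices by Yoneda, avoiding infinite sums inside the finite-dimensional-values subcategory).
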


\begin{proof}[Proof sketch.]
We only sketch the constructions for (d).  Let $U,V$ be finite dimensional
$K$-spaces. After choosing bases, we identify $U$ with $K^n$ and $V$ with $K^m$. Write $R:=K[x_{ij} \mid
i \in [m], j \in [n]]$, where the $x_{ij}$ are variables. Let $\phi \in
\Hom_R(R \otimes U, R \otimes V)=\Hom_R(R^n,R^m)$ be the matrix with
$(i,j)$-entry $x_{ij}$. Choose $K$-bases of $P_K\{U\}$ and $P_K\{V\}$, and
call their sizes $n'$ and $m'$, respectively. These bases map to bases
of $P_R\{R \otimes U\}$ and $P_R\{R \otimes V\}$ via $\iota_{K \to R}$. With respect to these
bases, the $R$-module homomorphism $P\{\phi\}$ is a matrix with entries
from $R$, i.e., polynomials in the $x_{ij}$. This $m' \times n'$-tuple
of polynomials in $R$ defines a morphism
\[ Q: \Spec(\Sym(\Hom(U,V)^*)) \to
\Spec(\Sym(\Hom(P\{U\},P\{V\})^*)).
\]
A straightforward computation shows that this morphism does not depend
on the choice of bases of $U,V,P\{U\},P\{V\}$ and that this construction
with $U,V$ varying yields a strict polynomial functor $Q$. To show that
$Q$ agrees with $P$, let $S$ be another $K$-algebra and let $\psi \in
S \otimes \Hom(U,V)$.  Let $\psi_{ij}$ be the matrix
entries of $\psi$ with respect to the chosen bases and let $R \to S$
be the $K$-algebra homomorphism determined by $x_{ij} \mapsto
\psi_{ij}$. We claim that the following diagram commutes:
\[
\xymatrix{
S \otimes P_K\{U\} \ar@/_15ex/[dddd]_{\iota_{K \to S}} \ar[rr]^{Q\{\psi\}} 
&& S \otimes P_K\{V\} \ar@/^15ex/[dddd]^{\iota_{K \to S}} \\
S \otimes_R (R \otimes P_K\{U\}) \ar[u]^{\cong} \ar[d]_{\id_S \otimes
\iota_{K \to R}} && 
S \otimes_R (R \otimes P_K\{V\}) \ar[u]^{\cong} \ar[d]_{\id_S \otimes
\iota_{K \to R}}\\
S \otimes_R P_R\{R \otimes U\} \ar[d]_{\iota_{R \to S}} \ar[rr]^{\id_S
\otimes P_R\{\phi\}} &&
S \otimes_R P_R\{R \otimes V\} \ar[d]_{\iota_{R \to S}}  \\
P_S\{S \otimes_R (R \otimes U)\} \ar[d]_{\cong} \ar[rr]_{P\{\id_S \otimes
\phi\}} &&
P_S\{S \otimes_R (R \otimes V)\} \ar[d]_{\cong} \\
P_S\{S \otimes U\} \ar[rr]_{P_S\{\psi\}} && P_S\{S \otimes V\}.} \]
Indeed, the left semicircle commutes by our axiom for $\iota$, and so
does the right semicircle. The center square commutes because $\iota_{R
\to S}$ is a natural transformation. The bottom square commutes because
of our choice of homomorphism $R \to S$, and the top rectangle commutes
by our definition of $Q$. This shows that $Q$ agrees with $P$. 

To show that $P \mapsto Q$ is essentially surjective, 
let $Q$ be a strict polynomial functor. We construct
a polynomial functor $P$ in our sense as follows. We first define
$P=(P_R)_R$ on finite rank modules as follows. For any $K$-algebra $R$
and any free $R$-module $U$ of finite rank choose a $K$-vector space $U'$
and an isomorphism $R \otimes U' \to U$. Then set $P_R\{U\}:=R \otimes
Q\{U'\}$. For any homomorphism $\phi: U \to V$ of free $R$-modules
of finite rank, we have a unique $R$-module $\phi':R \otimes U' \to R
\otimes V'$ such that the following diagram commutes:
\[ 
\xymatrix{
R \otimes U' \ar[d]_{\phi'} \ar[r]^{\cong} & U \ar[d]^{\phi} \\
R \otimes V' \ar[r]_{\cong} & V 
}.
\]
Now $\phi'$ is an $R$-point of $\Spec(\Sym(\Hom(U',V')^*))$, and hence
$Q$ associates an $R$-point of $\Spec(\Sym(\Hom(Q\{U'\},Q\{V'\}))^*)$
to it, i.e., an $R$-module homomorphism $Q\{\phi'\}: R \otimes Q\{U'\}
\to R \otimes Q\{V'\}$. We define $P_R\{\phi\}:=Q\{\phi'\}$. In the
general case, where $U$ is not necessarily finite dimensional, we can
define $P(U)$ as the direct limit $\varinjlim P(M)$ where $M$ runs over
finitely generated free $R$-submodules of $M$, and similarly for
morphisms. 

Next, we need to exhibit the natural transformation $\iota_{R \to S}$
for any $K$-algebra homomorphism $R \to S$. Again, we first define
$\iota_{R \to S}: S \otimes P_R\{U\} \to P_S\{S \otimes U\}$ 
for a finitely generated free $R$-module $U$. Set $V:=S \otimes_R U$,
and recall that we have also fixed an isomorphism $S \otimes V' \to V$ where
$V'$ is a finite dimensional $K$-space. Now there is a unique
$S$-module isomorphism $\psi:S \otimes U' \to S \otimes V'$ that makes
the following diagram commute:
\[ 
\xymatrix{
S \otimes_R (R \otimes U') \ar[rr]^{\id_S \otimes \cong} \ar[d]_{\cong}
&& S \otimes_R U=V \\
S \otimes U' \ar[rr]_{\psi} && S \otimes V' \ar[u]_{\cong}
}
\]
and the strict polynomial functor $Q$ assigns to $\psi$ an $S$-module
homomorphism 
\[ Q\{\psi\}:P_S\{U\}=S \otimes Q\{U'\} \to S \otimes Q\{V'\}
=P(S \otimes V'). \] 
We define $\iota_{R \to S}$ at $U$ as $Q\{\psi\}$. We extend
$\iota_{R \to S}$ to arbitrary free $R$-modules by a limit. To prove
that $\iota$ is well-defined and has the desired properties is tedious
but straightforward.
\end{proof}

\begin{corollary}
If $K$ is infinite then $P \mapsto P_K$ is an equivalence between the category of polynomial functors and the category of naive polynomial functors.
\end{corollary}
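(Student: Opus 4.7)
The plan is to reduce the corollary to parts (a)--(d) of the preceding proposition, together with the classical identification of strict and naive polynomial functors over an infinite base field.

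First I check that $P \mapsto P_K$ lands in the naive polynomial functors. By part (a), $P$ is a subquotient in our category of some $\bigoplus_i \bT^{d_i}$; since kernels, images, and direct sums are computed pointwise at $K$, $P_K$ is a subquotient of the corresponding direct sum of $(\bT^{d_i})_K$ in ordinary functors, hence is naive polynomial. For faithfulness, if $\phi\colon P \to P'$ has $\phi_K=0$, then for any $R$ and $V\in\Free_R$, Remark~\ref{rmk:poly-free} lets us write $V \cong R \otimes V_0$ with $V_0 \in \Vec_K$; the compatibility of $\phi$ with the $\iota$ isomorphisms identifies $\phi_R\{V\}$ with $\id_R \otimes \phi_K\{V_0\} = 0$, forcing $\phi=0$.

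For fullness and essential surjectivity I reduce to the finite-length case. On our side this is parts (b) and (c); on the naive side, the same decomposition by degree (using that $K$ is infinite, so that $K^\times$ acts on $F\{V\}$ with distinct integer weights separating homogeneous pieces) together with the fact that any naive polynomial functor is a directed union of its finite-length subfunctors allow the same reduction. In the finite-length case part (d) gives an equivalence between our category and that of strict polynomial functors, so it suffices to prove that over infinite $K$, the functor $Q \mapsto Q_K$ is an equivalence from strict polynomial functors to finite-length naive polynomial functors.

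Faithfulness of the last functor is immediate: a morphism of affine $K$-varieties is determined by its $K$-points over an infinite field. Essential surjectivity is the crux. Given a finite-length naive $F$, homogeneous of some degree $d$, I would present $F$ as a subquotient of a finite direct sum $(\bT^d)^{\oplus n}$ (using finite generation of finite-length objects). The tensor power $\bT^d$ carries an obvious strict polynomial structure, and over infinite $K$ one has
\[
\End_{\mathrm{naive}}\bigl((\bT^d)^{\oplus n}\bigr) \;=\; \End_{\mathrm{strict}}\bigl((\bT^d)^{\oplus n}\bigr) \;=\; \mathrm{Mat}_n(K[\mathfrak{S}_d]),
\]
by Schur--Weyl duality. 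Hence the naive morphisms used to cut out $F$ lift canonically to strict polynomial morphisms, giving $F$ a unique strict polynomial functor structure, from which part (d) produces the desired polynomial functor. The main obstacle is exactly this Schur--Weyl-type identification of endomorphism algebras, which is where the assumption that $K$ is infinite enters essentially.
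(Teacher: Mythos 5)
Your skeleton (reduce to finite-length homogeneous pieces, invoke part~(d) of the proposition, and compare strict with naive polynomial functors over an infinite field) is the same as the paper's, which simply quotes that last comparison as well known; your preliminary steps (that $P_K$ is naive, faithfulness via base change, the reduction to finite length) are fine. The genuine gap is in your key step, essential surjectivity of the strict-to-naive comparison. A finite-length naive functor $F$ is presented as a \emph{subquotient} of $(\bT^d)^{\oplus n}$, i.e.\ by a pair of naive subfunctors $F_1 \subseteq F_2 \subseteq (\bT^d)^{\oplus n}$; these are not ``cut out by morphisms'' of $(\bT^d)^{\oplus n}$, so the identification $\End_{\mathrm{naive}}((\bT^d)^{\oplus n}) = \End_{\mathrm{strict}}((\bT^d)^{\oplus n}) = \mathrm{Mat}_n(K[\mathfrak{S}_d])$ (itself a nontrivial theorem of De Concini--Procesi/Green in characteristic $p$, though true) gives you nothing to lift. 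You also cannot trade the subquotient presentation for a presentation by morphisms: when $p \le d$ the tensor powers do not generate the degree-$d$ category --- for instance every map $\bT^d \to \Gamma^d$ is a scalar multiple of total symmetrization, which already fails to be surjective on one-dimensional spaces, so $\Gamma^d$ is not a quotient of any direct sum of copies of $\bT^d$. Worse, an argument resting only on the equality of these endomorphism algebras would prove too much: it would identify both degree-$d$ categories with modules over $K[\mathfrak{S}_d]$, and the Schur functor $\Hom(\bT^d,-)$ is well known \emph{not} to be an equivalence when $0 < p \le d$. So Schur--Weyl duality is not where the hypothesis ``$K$ infinite'' does its work.

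What is actually needed (and what the paper's appeal to the well-known statement amounts to) is a density argument: for $K$ infinite, $\GL_n(K)$ is Zariski dense in the reduced group scheme $\GL_n$, and the $K$-points of the affine space $\Hom(U,V)$ are Zariski dense in it; hence a subspace stable under all $K$-points of $\GL_n$ is stable scheme-theoretically, and a family of linear maps natural with respect to all $K$-linear maps is natural with respect to $R$-points as well, i.e.\ compatible with base change. This is what promotes naive subfunctors of $\bigoplus_i \bT^{d_i}$ to strict ones (essential surjectivity) and naive transformations between strict functors to strict ones (fullness --- a point your write-up also leaves unaddressed beyond the endomorphism-ring claim). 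With such a lemma in place your reductions assemble into a proof; as written, the argument does not close.
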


\begin{proof}
This is well-known for strict polynomial functors. The result thus follows since all flavors of polynomial functors are direct limits of finite length objects.
\end{proof}

From now on, we will usually evaluate a polynomial functor $P$ only at
$K$-vector spaces, and write $P\{V\}$ rather than $P_K\{V\}$.  We will
only carry out constructions that are in a straightforward manner
compatible with base change, so we will not explicitly indicate the
natural isomorphisms $\iota_{R \to S}$. This will not lead to confusion.

\subsection{Operations on polynomial functors}

Let $U$ be a finite dimensional vector space and let $P$ be a polynomial
functor. The \defn{shift of $P$ over $U$}, denoted $\Sh_U(P)$, is the
polynomial functor that assigns to $V \in \Vec_K$ the space $P\{U \oplus V\}$ and to
$\phi \in \Hom_K(V,W)$ the map $P\{\id_U \oplus \phi\}$ (enriched 
in the obvious manner to be compatible with base change to $K$-algebras).
We will also use the notation $\Sh_n(P)$ for $\Sh_{K^n}(P)$.

Assume $\chr(K)=p$ and let $q$ be a characteristic power. An important operation
on polynomial functors is the {\em Frobenius twist}: $P^{(q)}$ assigns to
$V$ the space $K \otimes_K P\{V\}$, where the $K$-module structure of $K$ comes from the $q$-th
power map $K \to K$. If $P$ has finite degree $d$, then $P^{(q)}$ is a polynomial
functor of finite degree $qd$.

A last ingredient in our proofs is the following partial order on
polynomial functors: we say that $Q$ is smaller than $P$ if they are not
isomorphic and for the smallest $e$ with $Q_e$ not isomorphic to $P_e$,
$Q_e$ is a sub-object of $P_e$. This is a well-founded partial order on
(isomorphism classes of) polynomial functors. A typical instance of
this that we will use is the following: if $P$ has degree $d$ and $U$
is a finite dimensional vector space, then the shift $\Sh_U P$ also has
degree $d$ and satisfies $(\Sh_U P)_d \cong P_d$. So if in $\Sh_U P$
we replace the top-degree part by a proper subobject, then we obtain a
polynomial functor strictly smaller than $P$.

\subsection{Polynomial representations} \label{ss:polyrep}

The space $\bV=\bigcup_{n \ge 1} K^n$ is a representation of the ind-group-scheme $\GL=\bigcup_{n \ge 1} \GL_n$. A \defn{polynomial representation} of $\GL$ is one that occurs as a subquotient of a possible infinite direct sum of tensor powers of $\bV$. If $K$ is infinite then one can replace the ind-group-scheme $\GL$ with the discrete group $\GL(K)$.

Given a polynomial functor $P$, the group $\GL(K)$ acts on $P\{\bV\}=\bigcup_n P\{K^n\}$ via the group homomorphism $g \mapsto P\{g\}$. The compatibility of $P$ with base change yields an action of $\GL$. The assignment $P \mapsto P\{\bV\}$ is an equivalence of categories from polynomial functors to polynomial
representations of $\GL$.

The shift and Frobenius twist operations on polynomial functors translate to similar operations on polynomial representations. If $P$ is a polynomial representation then $\Sh_n(P)$ can
be regarded naturally as the polynomial representation obtained from $P$
by precomposing with the injective ind-group endomorphism $\GL \to \GL,\
g \mapsto \diag(1_n,g)$, where $1_n$ stands for the $n \times n$-identity
matrix. We write $G(n)$ for the image of this endomorphism.

Suppose $P$ is a polynomial representation and $x$ is an element of $P$. If the field $K$ is finite then the $\GL(K)$-subrepresentation generated by $x$ and the $\GL$-subrepresentation generated by $x$ may not coincide. The following proposition gives us some control on this situation.

\begin{proposition} \label{prop:FiniteExt}
Let $P$ be a polynomial $\GL$-representation, let $x_1,\ldots,x_k \in
P=P\{\bV\}$, and let $Q$ be the smallest polynomial subrepresentation of
$P$ containing the $x_i$. Then there exists a finite extension $L$ of $K$
such that $L \otimes Q$ equals the $L$-linear span of the orbits $\GL(L)
\cdot (1 \otimes x_i),\ i=1,\ldots,k$.
\end{proposition}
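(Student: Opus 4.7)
The plan is to reduce to a finite-dimensional question at a single level $m_0$, solve that question using Zariski density over $\ol K$ and faithful flatness, and then propagate the solution upward using transitivity of $\GL_m$ on subspaces. Choose $n$ with $x_1,\ldots,x_k \in P\{K^n\}$. Splitting into homogeneous components, let $d$ be the maximum degree of any non-zero component of any $x_i$, and set $m_0 = \max(n,d)$. The polynomial subrepresentation $Q$ corresponds to the smallest polynomial subfunctor $Q'$ of $P$ containing $x_1,\ldots,x_k$; then $Q'$ is contained in $P_{\le d} := \bigoplus_{e \le d} P_e$ and has finite length, so $Q'\{K^{m_0}\}$ is finite-dimensional. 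Since $m_0 \ge d$, the equivalence between polynomial functors of degree $\le d$ and polynomial $\GL_{m_0}$-representations of degree $\le d$ identifies $Q'\{K^{m_0}\}$ with the $\GL_{m_0}$-scheme-subrepresentation of $P_{\le d}\{K^{m_0}\}$ generated by $x_1,\ldots,x_k$.

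For the finite-dimensional descent, consider the orbit morphisms $\phi_i \colon \GL_{m_0} \to P_{\le d}\{K^{m_0}\}$, $g \mapsto g \cdot x_i$: the $K$-linear span of their scheme-theoretic images is $Q'\{K^{m_0}\}$. Since $\ol K$ is infinite and $\GL_{m_0}$ is smooth, $\GL_{m_0}(\ol K)$ is Zariski-dense in $\GL_{m_0}$, so the $\ol K$-linear span of $\bigcup_i \GL_{m_0}(\ol K) \cdot (1 \otimes x_i)$ equals $\ol K \otimes Q'\{K^{m_0}\}$. Choose finitely many $g_{i,j} \in \GL_{m_0}(\ol K)$ whose images $g_{i,j} \cdot x_i$ form an $\ol K$-basis of this space; their matrix entries generate a finite extension $L/K$, and by faithful flatness of $\ol K$ over $L$ the $L$-linear span of $\GL_{m_0}(L) \cdot (1 \otimes x_i)$ already equals $L \otimes Q'\{K^{m_0}\}$.

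To propagate upward, for any polynomial functor $R$ of degree $\le d$ we have $R\{V\} = \sum_{V_0} \mathrm{image}(R\{V_0\} \to R\{V\})$ with $V_0$ ranging over $d$-dimensional subspaces of $V$; this follows from the strict polynomial functor equivalence, since both sides agree on $V = K^d$. Because $\GL_m(L)$ acts transitively on $m_0$-dimensional subspaces of $L^m$ for $m \ge m_0$, this yields $R\{L^m\}$ as the $L$-linear span of $\GL_m(L) \cdot \mathrm{image}(R\{L^{m_0}\} \to R\{L^m\})$. Applied with $R = Q'$ and combined with the previous step, we obtain $L \otimes Q'\{K^m\} = L$-linear span of $\GL_m(L) \cdot (1 \otimes x_i)$ for every $m \ge m_0$; taking the union over $m$ gives the proposition. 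The principal subtlety is the identification of $Q'\{K^{m_0}\}$ with the $\GL_{m_0}$-scheme-subrepresentation generated by the $x_i$, which is where the choice $m_0 \ge d$ is essential.
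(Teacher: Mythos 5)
Your argument is correct, but it is genuinely different in form from what the paper does: the paper gives no self-contained argument at all, instead citing the proof of \cite[Lemma~2.7]{FS} (which proves the statement for any \emph{infinite} extension $L$) and observing that an inspection of that proof shows finitely many group elements suffice, so that $L$ may be taken finite over $K$, of degree controlled by $Q$. You instead reconstruct a complete proof: reduce to a single level $m_0=\max(n,d)\ge d$, where the evaluation equivalence for polynomial functors of degree $\le d$ identifies $Q\{K^{m_0}\}$ with the $\GL_{m_0}$-subrepresentation (subcomodule) generated by the $x_i$; identify that subcomodule with the linear span of the scheme-theoretic images of the orbit maps; use Zariski density of $\GL_{m_0}(\ol K)$ to extract finitely many $g_{i,j}\cdot x_i$ forming a basis, so that the finite extension $L$ generated by their matrix entries already works at level $m_0$; and then propagate to all levels using that a degree-$\le d$ functor is spanned by images from $d$-dimensional subspaces together with transitivity of $\GL_m(L)$ on $m_0$-dimensional subspaces (with $\diag(g,1)$ intertwining the inclusions, so orbit vectors at level $m_0$ map into orbit vectors at level $m$). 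All the asserted facts are correct, and you rightly flag $m_0\ge d$ as the crucial hypothesis. What your route buys is transparency: the finite extension is produced explicitly (matrix entries of finitely many elements), which makes visible the paper's parenthetical claim that $[L:K]$ can be controlled by $Q$; what the paper's route buys is brevity, at the cost of asking the reader to re-examine the Friedlander--Suslin proof. Two small points you should tighten in a full write-up: the equality ``$K$-linear span of the scheme-theoretic image of $g\mapsto g\cdot x_i$ $=$ subcomodule generated by $x_i$'' should be justified via the coaction $\rho(x_i)=\sum_j v_j\otimes f_j$ (the span of the $v_j$ is both sides, and this description also shows compatibility with base change to $\ol K$); and the finite-dimensionality of $Q\{K^{m_0}\}$ should be deduced \emph{from} the identification with the $S(m_0,\le d)$-submodule generated by the $x_i$ (a finitely generated module over a finite-dimensional algebra), rather than asserted beforehand via finite length of $Q$, which as written is slightly circular.
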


\begin{proof}
This follows from the proof of \cite[Lemma 2.7]{FS}---there it is shown
that any infinite field $L$ containing $K$ has this property, but the
proof shows that one can choose $L/K$ of finite degree that can be
controlled by the polynomial representation $Q$.
\end{proof}

\subsection{$\GL$-varieties} \label{ss:glvar}

All our schemes will be over $K$. A {\em pre-variety} is a scheme of
finite type over $K$, and a {\em variety} is a reduced pre-variety. Note
that a variety may not stay reduced under base change to $\ol{K}$: for
$K=\bF_p(t)$ the scheme $\Spec(\bF_p(t)[x]/(x^p-t))$ is reduced but its
base change to $\ol{K}$ is not. The class of pre-varieties is preserved
under base change.

A {\em $\GL$-algebra} is a (commutative, unital) algebra object in
the category of polynomial representations of $\GL$. It is called
{\em finitely $\GL$-generated} if is generated, as an algebra, by a
finite-length polynomial subrepresentation. 

An affine {\em $\GL$-scheme} $X$ is the spectrum of a $\GL$-algebra
$B$, and a {\em closed $\GL$-subscheme} of $X$ is the spectrum of a
$\GL$-algebra of the form $B/I$, where $I$ is an ideal in $B$ as well
as a subobject of $B$ in the category of polynomial representations
over $K$. An {\em open $\GL$-subscheme} of $X$ is the complement of a
closed $\GL$-subscheme, with the usual scheme structure. A {\em locally
closed $\GL$-subscheme} of $X$ is an open $\GL$-subscheme within a closed
$\GL$-subscheme of $X$.

Note that $X$ comes with an algebraic action of the ind-group $\GL$ by
automorphisms. The closed $\GL$-subschemes of $X$ are precisely the closed
subschemes that are stable under this ind-group action (this requirement
is stronger than that they are stable under the abstract group $\GL$).

An affine {\em $\GL$-prevariety} is an affine scheme of the form
$\Spec(R)$ where $R$ is a finitely $\GL$-generated algebra. An affine
{\em $\GL$-variety} is a reduced $\GL$-prevariety. We will also
encounter {\em quasi-affine} $\GL$-(pre-)varieties: these are open
$\GL$-sub(pre-)varieties in affine $\GL$-(pre-)varieties. The class of
$\GL$-prevarieties is preserved under base change to $\ol{K}$, fiber
products, and Frobenius twist.

Note that $\GL$-algebras $R$ (respectively, $\GL$-schemes $X$) are
naturally covariant (respectively, contravariant) functors from vector
spaces to algebras (respectively, schemes) over $K$. We will use the
notation $R\{U\}$ and $X\{U\}$, in accordance with that for polynomial
functors. We will sometimes write $R_0:=R\{0\}$ and $X_0:=X\{0\}$.  Also,
the shifts $\Sh_U(R), \Sh_n(R)$ and $\Sh_U(X), \Sh_n(X)$ are defined in the
natural manner.

The prototypical examples of $\GL$-varieties are those of the form
$\bA(P):=\Spec(\Sym(P))$, where $P$ is a finite-length polynomial
representation. These play the role of affine spaces in our theory.
For instance, every $\GL$-prevariety $X$ is a closed $\GL$-subscheme of some such space. Indeed, $X=\Spec(R)$ for some is a finitely
$\GL$-generated algebra $R$. Let $P \subset R$ be a finite-length
$\GL$-subrepresentation that generates $R$ as an algebra. Then we have
a natural surjection $\Sym(P) \to R$, which dually gives the desired
closed embedding $X \to \bA(P)$.

Although $\GL$-varieties are almost always infinite dimensional, they nonetheless satisfy an important noetherianity property \cite{draisma}:

\begin{theorem} \label{thm:noeth}
Let $X$ be a $\GL$-variety. Then any descending chain of $\GL$-stable closed subsets of $X$ stabilizes.
\end{theorem}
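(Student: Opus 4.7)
The strategy is to reduce to topological $\GL$-noetherianity of $\bA(P)$ for finite-length polynomial representations $P$, and then to induct on $P$ using the well-founded order defined in \S2.2. Since the excerpt shows that every $\GL$-prevariety embeds as a closed $\GL$-subvariety in some $\bA(P)$, and since closed $\GL$-stable subsets of $X$ are precisely the intersections with $X$ of closed $\GL$-stable subsets of $\bA(P)$, it suffices to prove that for every finite-length $P$ the descending chain condition holds for closed $\GL$-subvarieties of $\bA(P)$.

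The induction is on $P$ in the well-founded order. The base case $P=0$ is trivial since $\bA(0)=\Spec(K)$. For the inductive step, take a descending chain $Y_1\supseteq Y_2\supseteq\cdots$ in $\bA(P)$; if all $Y_i=\bA(P)$ we are done, and otherwise some $Y_N$ is a proper closed $\GL$-subvariety, so it suffices to show that chains inside $Y_N$ stabilize. The key input from \cite{draisma} is: for every proper closed $\GL$-subvariety $Y\subsetneq \bA(P)$ there exist an integer $n\ge 0$, a non-zero $\GL$-invariant function $h$ on $\Sh_n\bA(P)$, and a $\GL$-equivariant closed embedding of the principal open $(\Sh_n Y)[1/h]$ into a product $B\times \bA(Q)$, where $B$ is a finite-dimensional variety carrying the trivial $\GL$-action and $Q$ is strictly smaller than $P$ in the well-founded order. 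Granting this, $(\Sh_n Y)[1/h]$ is $\GL$-noetherian by the inductive hypothesis applied to $\bA(Q)$ together with ordinary noetherianity of $B$; $\GL$-translates of $\{h\ne 0\}$ cover all of $\Sh_n\bA(P)$ outside a proper closed $\GL$-subvariety, and iterating on this smaller residual locus terminates by well-foundedness.

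The main obstacle is the key embedding statement itself. Its proof uses the shift operation to trivialise the $\GL$-action at a generic point $v$ of the top-degree stratum $\bA(P_d)$: the $\GL$-orbit through $v$ is sufficiently large (roughly, unirationally parametrised) that, after passing to a suitable $\GL$-invariant localisation and effectively translating by $v$, the remaining coordinates on $\bA(P)$ can be re-expressed in terms of a polynomial functor whose top-degree component is a proper subobject of $P_d$. The well-founded order in \S2.2 is designed for exactly this move: shifts $\Sh_U$ preserve the top-degree part, while replacing that top-degree part by a proper subobject constitutes a strict decrease. Everything else in the argument---the reduction to $\bA(P)$, the covering by $\GL$-translates, and the closure of the induction---is formal, but producing this embedding is the genuinely geometric step, and it is where all the work of \cite{draisma} goes.
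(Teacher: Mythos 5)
You should first be aware that the paper contains no proof of Theorem~\ref{thm:noeth}: it is imported directly from \cite{draisma}, and the material of \S\ref{s:embed} is only \emph{modeled} on the arguments given there. So your proposal has to be measured against Draisma's proof, and what you have written is essentially a summary of that proof in which the one genuinely hard ingredient (the shift--localize--embed statement) is itself attributed back to \cite{draisma}. As an outline it has the right shape --- reduce to $\bA(P)$, induct along the well-founded order on polynomial functors, and use an embedding of a shifted principal open $(\Sh_n Y)[1/h]$ into $B\times\bA(Q)$ with $Q$ strictly smaller; this is exactly the mechanism the present paper re-develops in Theorem~\ref{thm:embed} and exploits in the proof of Theorem~\ref{thm:shift}. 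But it is not a proof, and two of the steps you dismiss as formal contain real gaps.

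First, your inductive hypothesis is too weak: you need the descending chain condition for $\GL$-stable closed subsets of $B\times\bA(Q)$, and closed $\GL$-stable subsets of such a product are not products, so ``DCC for $\bA(Q)$ together with ordinary noetherianity of $B$'' does not formally give it. In \cite{draisma} the statement is proved over an arbitrary Noetherian base (equivalently, the finite-dimensional factor $B$ is carried along in the induction) precisely to make this step go through. Second, your claim that ``iterating on this smaller residual locus terminates by well-foundedness'' is unjustified and, as stated, circular: the residual locus $Y\setminus\GL\cdot\{h\neq 0\}$ lives in the same $\bA(P)$, so the well-founded order on functors gives no decrease, and the assertion that a strictly decreasing chain of such residual loci must stop is exactly the noetherianity being proved. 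What makes the iteration terminate in \cite{draisma} --- and what powers the analogous step in the proof of Theorem~\ref{thm:embed} here --- is an auxiliary numerical induction: one chooses $f$ of minimal degree vanishing on the subvariety and takes $h$ to be a directional derivative of $f$ (after a Frobenius descent in characteristic $p$, cf.\ Proposition~\ref{prop:torsor-frob-2}), so that $\deg h<\deg f$; minimality forces $h$ not to vanish identically, and the residual locus is then cut out by equations of strictly smaller degree, so the process stops after finitely many rounds. Without this (or an equivalent decreasing quantity) your induction does not close. Relatedly, your sketch ignores positive characteristic altogether; for the purely topological statement this only affects the key lemma, which you have in any case deferred to \cite{draisma}, but it is worth noting that the ``closed embedding'' there is, in characteristic $p$, only a closed immersion up to Frobenius, which is still enough for a homeomorphism onto a closed subset and hence for the topological argument.
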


Let $X$ be a quasi-affine $\GL$-scheme. A \defn{simple open} is an open subset of the form $X[1/h]$, where $h$ is a $\GL$-invariant function on $X$. Such a set is itself an affine $\GL$-scheme. The intersection of two simple opens is still a simple open: we have $X[1/h] \cap X[1/h'] = X[1/hh']$. If $f \colon Y \to X$ is a map of affine $\GL$-schemes then the inverse image of a simple open is a simple open; indeed, $f^{-1}(X[1/h])=Y[1/f^*(h)]$.

Given an arbitrary open subset $U$ of $X$, we write $\GL \cdot U$ for the smallest $\GL$-stable open subset of $X$ containing $U$. If $K$ is infinite then $\GL \cdot U$ is the union of the translates $gU$ of $U$ with $g \in \GL(K)$; if $K$ is finite, the same is true provided we use $\GL(\ol{K})$ instead of $\GL(K)$. In fact, we sometimes have an even stronger statement:

\begin{proposition} \label{prop:GLU}
In the above setting, suppose that $U$ is $G(n)$-stable for some $n$. Then there exists a finite extension $\Omega/K$ such that $\GL \cdot U$ is the union of the sets $gU$ with $g \in \GL(\Omega)$.
\end{proposition}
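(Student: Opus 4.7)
My plan is to translate the statement into polynomial representation theory and then invoke Proposition~\ref{prop:FiniteExt}. First I would reduce to the case where $X$ is affine by embedding it as an open $\GL$-subscheme of an affine $\GL$-scheme $\bar X$; here $U$ remains open and $G(n)$-stable in $\bar X$, and the smallest $\GL$-stable open of $X$ containing $U$ is the intersection with $X$ of the analogous open in $\bar X$, so the identity descends from $\bar X$ to $X$. Assume then $X = \Spec(R)$. Let $Y := X \setminus U$, a $G(n)$-stable closed subset with $G(n)$-stable radical ideal $\cI_Y \subseteq R$, and let $Z := X \setminus (\GL \cdot U)$, the largest $\GL$-stable closed subset of $X$ contained in $Y$, with $\GL$-stable radical ideal $\cI_Z$. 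A direct check shows $\cI_Z = \sqrt{\langle \GL \cdot \cI_Y \rangle}$, where $\GL \cdot \cI_Y$ denotes the smallest polynomial $\GL$-subrepresentation of $R$ containing $\cI_Y$.

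The crucial intermediate step is to produce a \emph{finite-length} $G(n)$-subrepresentation $Q \subseteq \cI_Y$ with $\cI_Z = \sqrt{\langle \GL \cdot Q \rangle}$. I would write $\cI_Y$ as a directed union $\bigcup_\alpha Q_\alpha$ of finite-length $G(n)$-subrepresentations. Then $\GL \cdot \cI_Y = \bigcup_\alpha \GL \cdot Q_\alpha$, and since radicalization commutes with directed unions of ideals, $\cI_Z = \bigcup_\alpha \sqrt{\langle \GL \cdot Q_\alpha \rangle}$ is an ascending union of $\GL$-stable radical ideals of $R$. By Theorem~\ref{thm:noeth}, this chain stabilizes at some $Q := Q_{\alpha_0}$.

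Finally I apply Proposition~\ref{prop:FiniteExt} to a $K$-basis $x_1, \ldots, x_k$ of $Q$ inside the polynomial $\GL$-representation $R$, obtaining a finite extension $\Omega/K$ such that $\Omega \otimes_K (\GL \cdot Q)$ equals the $\Omega$-linear span of the orbits $\GL(\Omega) \cdot (1 \otimes x_i)$. Since each $x_i \in \cI_Y$, these orbits all lie in the ideal $\cJ_\Omega := \sum_{g \in \GL(\Omega)} g \cdot (\cI_Y \otimes_K \Omega) \subseteq R_\Omega$, so $\Omega \otimes_K \langle \GL \cdot Q \rangle \subseteq \cJ_\Omega$, and taking radicals yields $\cI_Z \otimes_K \Omega \subseteq \sqrt{\cJ_\Omega}$. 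The reverse inclusion $\cJ_\Omega \subseteq \cI_Z \otimes_K \Omega$ is immediate because $\cI_Z \otimes_K \Omega$ is $\GL(\Omega)$-stable and contains $\cI_Y \otimes_K \Omega$. Hence $\sqrt{\cJ_\Omega} = \sqrt{\cI_Z \otimes_K \Omega}$ as radical ideals of $R_\Omega$, and taking complements in $X_\Omega$ translates this into the desired identity $\GL \cdot U = \bigcup_{g \in \GL(\Omega)} gU$.

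The hard part is the middle step: one cannot apply Theorem~\ref{thm:noeth} directly to $\cI_Z$ to extract finitely many $\GL$-generators, since such generators need not lie in $\cI_Y$, whereas Proposition~\ref{prop:FiniteExt} requires its input to lie there. The ascending-chain argument using $G(n)$-subrepresentations of $\cI_Y$ is exactly what lets us stay inside $\cI_Y$ while still recovering all of $\cI_Z$ up to radical.
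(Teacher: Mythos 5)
Your overall strategy is the same as the paper's (reduce to the affine case, use noetherianity to get down to finitely many elements of the ideal of $X \setminus U$, then invoke Proposition~\ref{prop:FiniteExt}), and your reductions and the final radical-ideal bookkeeping over $\Omega$ are fine. The one concrete error is in the last step: you cannot take ``a $K$-basis $x_1,\ldots,x_k$ of $Q$,'' because a finite-length $G(n)$-subrepresentation of $\cI_Y$ is almost never finite dimensional over $K$ (already the $G(n)$-subrepresentation generated by a single element typically contains a copy of $\bV$). Since Proposition~\ref{prop:FiniteExt} takes finitely many \emph{elements} as input, you need a different finite set. The repair is short: a finite-length object in the category of polynomial representations is generated by finitely many elements (induct on the length, using that a simple object is generated by any nonzero element), so choose $x_1,\ldots,x_k$ that $G(n)$-generate $Q$; then the smallest polynomial $\GL$-subrepresentation containing the $x_i$ is also a $G(n)$-subobject containing them, hence contains $Q$, hence equals $\GL \cdot Q$, and Proposition~\ref{prop:FiniteExt} applied to these $x_i$ gives exactly the extension $\Omega$ your argument needs. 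With that substitution the rest of your proof goes through unchanged.

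It is also worth noting that your ``crucial intermediate step'' can be bypassed, which is what the paper does: apply Theorem~\ref{thm:noeth} to $X$ viewed as a $G(n)$-variety (via $\GL \cong G(n)$) to conclude that $X \setminus U$ is already cut out by the radical of a finitely $G(n)$-generated ideal, i.e.\ there are $f_1,\ldots,f_k \in \cI_Y$ with $\sqrt{\langle G(n)\cdot\{f_i\}\rangle}=\cI_Y$; one then checks directly that $\cI_Z=\sqrt{\langle \GL\cdot\{f_i\}\rangle}$ and feeds these same $f_i$ into Proposition~\ref{prop:FiniteExt}. This avoids the directed-union-of-$\GL$-stable-radical-ideals argument (which is correct, but does more work), and it automatically hands you finitely many elements, sidestepping the basis issue above.
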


\begin{proof}
It suffices to treat the case where $X$ is affine (replace $X$ with the spectrum of its coordinate ring). Let $R$ be the coordinate ring of $X$. By noetherianity (Theorem~\ref{thm:noeth}), the closed
$G(n)$-subscheme $X \setminus U$ of $X$ is defined by the radical
of a finitely $G(n)$-generated ideal $I$ of $R$. Let $f_1,\ldots,f_k$
be generators of $I$. Then $X$ is defined by the (radical of the ideal
generated by) smallest polynomial $\GL$-subrepresentation $Q$ of $R$
containing $f_1,\ldots,f_k$. By Proposition~\ref{prop:FiniteExt}, there
exists a finite extension $\Omega$ of $K$ such that $\Omega \otimes Q$ is generated
by the orbits $\GL(\Omega) \cdot (1 \otimes f_i),\ i=1,\ldots,k$. This
extension has the desired property.
\end{proof}

\subsection{Frobenius} \label{ss:frob}

Let $f \colon A \to B$ be a homomorphism of $K$-algebras. We say that $f$
is \defn{$F$-injective} if for every element $x$ of the kernel there
is some characteristic power $q$ such that $x^q=0$. We say that $f$
is \defn{$F$-surjective} if for every element $x \in B$ there is
some characteristic power $q$ such that $x^q \in \im(f)$.  We say
that $f$ is an \defn{$F$-equivalence} if it is both $F$-injective and
$F$-surjective. Given $f \colon A \to B$ and $g \colon B \to C$ we have analogues of familiar statements about
isomorphisms, e.g.: if $f$ (respectively, $g$) is an $F$-equivalence,
then $g \circ f$ is one if and only if $g$ (respectively, $f$) is. We say that a map of affine schemes $f \colon Y \to X$ is an \defn{$F$-equivalence} if the map on rings is, and a \defn{closed $F$-immersion} if the map on rings is $F$-surjective.

We make two comments about the above definitions. First, in characteristic~0, $F$-injective is equivalent to injective, $F$-surjective is equivalent to surjective, and $F$-equivalence is equivalent to isomorphism. Second, these properties are actually independent of the $K$-algebra structure.

\begin{proposition} \label{prop:fg-F-equiv}
Let $f \colon A \to B$ be a $\bF_p$-algebra homomorphism, and let $F$
denote the absolute Frobenius map (i.e., $F(x)=x^p$) on either of these $\bF_p$-algebras.
\begin{enumerate}
\item Suppose there is a ring homomorphism $g \colon B \to A$ and $n
\ge 0$ such that $g \circ f=F^n$ and $f \circ g = F^n$. Then $f$ is an $F$-equivalence.
\item Suppose $f$ is an $F$-equivalence and $A$ and $B$ are finitely generated $\bF_p$-algebras. Then a map $g$ as in (a) exists.
\end{enumerate}
\end{proposition}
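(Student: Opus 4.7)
The plan is to verify (a) directly from the definitions and to construct $g$ in (b) by a two-step ``align up to Frobenius'' procedure, using finite generation to make finitely many bounds uniform.

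Part (a) is immediate. If $x\in\ker f$ then $x^{p^n}=F^n(x)=g(f(x))=0$, so $f$ is $F$-injective. For any $y\in B$, $y^{p^n}=F^n(y)=f(g(y))\in\im(f)$, so $f$ is $F$-surjective.

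For part (b), first fix a presentation $B=\bF_p[b_1,\dots,b_m]/(r_1,\dots,r_k)$, which exists because $B$ is finitely generated. By $F$-surjectivity, pick a single characteristic power $q$ so that $b_i^q\in\im(f)$ for every $i$, and choose $a_i\in A$ with $f(a_i)=b_i^q$. Using that raising to the $q$-th power is a ring homomorphism over $\bF_p$, each relation satisfies $f(r_j(a_1,\dots,a_m))=r_j(b_1^q,\dots,b_m^q)=r_j(b_1,\dots,b_m)^q=0$. By $F$-injectivity applied to the finitely many $r_j(a_1,\dots,a_m)$, choose a common characteristic power $q'$ with $r_j(a_1,\dots,a_m)^{q'}=0$, equivalently $r_j(a_1^{q'},\dots,a_m^{q'})=0$ for every $j$. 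Therefore $b_i\mapsto a_i^{q'}$ extends to a ring homomorphism $g_0\colon B\to A$, and the computation $f(g_0(b_i))=f(a_i)^{q'}=b_i^{qq'}$ shows $f\circ g_0=F^{qq'}$ on generators, hence everywhere.

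Next, fix generators $a'_1,\dots,a'_\ell$ of the $\bF_p$-algebra $A$. For each $j$, one has $f(g_0(f(a'_j))-F^{qq'}(a'_j))=(f\circ g_0\circ f)(a'_j)-f(a'_j)^{qq'}=f(a'_j)^{qq'}-f(a'_j)^{qq'}=0$, so $g_0(f(a'_j))-F^{qq'}(a'_j)\in\ker f$. By $F$-injectivity and finiteness of the generator set, choose a common characteristic power $q^*$ killing each of these finitely many differences; expanding in characteristic $p$ gives $g_0(f(a'_j))^{q^*}=F^{qq'q^*}(a'_j)$. Now define $g\colon B\to A$ by $g(b)=g_0(b)^{q^*}$; this is a ring homomorphism, being the composition of $g_0$ with the $q^*$-th power Frobenius on $A$. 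On generators $b_i$ of $B$, $f(g(b_i))=f(g_0(b_i))^{q^*}=b_i^{qq'q^*}$, so $f\circ g=F^{qq'q^*}$. On generators $a'_j$ of $A$, $g(f(a'_j))=g_0(f(a'_j))^{q^*}=F^{qq'q^*}(a'_j)$, so $g\circ f=F^{qq'q^*}$. Writing $qq'q^*=p^n$ completes (b).

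The main obstacle is the asymmetry of the construction: anchoring the lift $g_0$ on generators of $B$ automatically makes $f\circ g_0$ a Frobenius power, but not $g_0\circ f$, so a further uniform Frobenius twist---afforded by finite generation of $A$ together with $F$-injectivity---is needed to bring both compositions to the same Frobenius power. All remaining steps amount to bookkeeping with the identity $(x+y)^{p^r}=x^{p^r}+y^{p^r}$ in characteristic $p$ and the fact that two ring homomorphisms agreeing on a generating set agree everywhere.
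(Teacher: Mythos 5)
Your proposal is correct. Part (a) is the same one-line verification as in the paper. Part (b), however, takes a genuinely different route. The paper defines $g$ in a single stroke as the composite
\begin{displaymath}
B \xrightarrow{\;F^s\;} \im(f) \longrightarrow A/\rad(A) \xrightarrow{\;F^r\;} A,
\end{displaymath}
where the middle map sends $f(x)$ to the class of $x$ (well defined precisely because $F$-injectivity gives $\ker(f) \subset \rad(A)$); finite generation is used only to get one $s$ with $y^{p^s} \in \im(f)$ for all $y \in B$ and one $r$ with $x^{p^r}=0$ for all nilpotent $x \in A$, and then \emph{both} identities $f \circ g = F^{r+s}$ and $g \circ f = F^{r+s}$ fall out of the construction simultaneously, with no correction step. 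You instead choose a finite presentation of $B$ (implicitly via the Hilbert basis theorem), lift the generators through $f$ up to a Frobenius power, verify the finitely many relations up to a further Frobenius power to get $g_0$ with $f \circ g_0 = F^{\log_p(qq')}$, and then—because anchoring the construction on generators of $B$ only controls one of the two compositions—apply a second Frobenius twist, using $F$-injectivity on the finitely many elements $g_0(f(a'_j)) - (a'_j)^{qq'}$, to also force $g \circ f$ to be the same Frobenius power; this is exactly the asymmetry you flag, and your handling of it is sound. What each approach buys: the paper's construction is shorter, symmetric, and presentation-free, at the cost of invoking a uniform nilpotency exponent for all of $\rad(A)$; yours localizes every use of $F$-injectivity to finitely many explicit elements and is more algorithmic in flavor, at the cost of a finite presentation of $B$ and the two-step correction. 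Both ultimately rest on the same noetherian-type finiteness of $A$ and $B$, and both yield the statement as claimed.
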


\begin{proof}
(a) Put $q=p^n$. If $f(x)=0$ then $0=g(f(x))=x^q$, and so $f$ is $F$-injective. Given $y \in B$, we have $y^q=f(x)$ where $x=g(y)$, and so $f$ is $F$-surjective.

(b) Let $r \ge 0$ be such that $x^{p^r}=0$ for every nilpotent $x \in A$, and let $s \ge 0$ be such that $y^{p^s} \in \im(f)$ for all $y \in B$; such $r$ and $s$ exists since $A$ and $B$ are finitely generated. Put $n=r+s$. Define $g$ to be the following composition
\begin{displaymath}
\xymatrix{
B \ar[r]^-{F^s} & \im(f) \ar[r] & A/\rad(A) \ar[r]^-{F^r} & A. }
\end{displaymath}
Note that since $f$ is $F$-injective, we have $\ker(f) \subset \rad(A)$, which is why the second map above exists. Explicitly, if $y \in B$ is given then $y^{p^s}=f(x)$ for some $x \in A$, and $g(y)$ is defined to be $x^{p^r}$. We thus have $f(g(y))=f(x)^{p^r}=y^{p^n}$. Also, if $x \in A$ is given and $y=f(x)$ then $y^{p^s}=f(x^{p^s})$, and so $g(f(x))=g(y)=x^{p^n}$. The result follows.
\end{proof}

\begin{proposition} \label{prop:F-equiv-bc}
Let $f \colon A \to B$ and $A \to A'$ be homomorphisms of $\bF_p$-algebras, and let $f' \colon A' \to B'$ be the base change of $f$. If $f$ is an $F$-equivalence (resp.\ $F$-surjection) so is $f'$.
\end{proposition}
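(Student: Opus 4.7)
The proof splits into two claims. For $F$-surjectivity, my approach is a direct computation exploiting that the $q$-th power map is additive in characteristic $p$. Let $\alpha \colon A \to A'$ denote the structure map, so that $f'(a') = a' \otimes 1$ and the tensor product relation reads $\alpha(a) a' \otimes b = a' \otimes f(a) b$ in $B' = A' \otimes_A B$. Given $z = \sum_{i=1}^n a'_i \otimes b_i \in B'$, the $F$-surjectivity of $f$ supplies, for each $i$, a characteristic power $q_i$ and an element $c_i \in A$ with $b_i^{q_i} = f(c_i)$; taking $q$ a common characteristic power divisible by every $q_i$ and setting $d_i = c_i^{q/q_i}$ gives $b_i^q = f(d_i)$, whence
\[
z^q = \sum_i (a'_i)^q \otimes b_i^q = \sum_i (a'_i)^q \otimes f(d_i) = \sum_i \alpha(d_i)(a'_i)^q \otimes 1 = f'\Bigl(\sum_i \alpha(d_i)(a'_i)^q\Bigr) \in \im(f').
\]

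For the $F$-equivalence claim, it remains to verify $F$-injectivity of $f'$. My plan is to reduce to an honest isomorphism statement via the perfection functor $R \mapsto R_{\mathrm{perf}} := \varinjlim(R \xrightarrow{F} R \xrightarrow{F} \cdots)$. An $\bF_p$-algebra map $g \colon R \to S$ is an $F$-equivalence if and only if $g_{\mathrm{perf}}$ is an isomorphism; this is a routine unpacking of the definitions, using that an element of $R$ becomes zero in $R_{\mathrm{perf}}$ iff it is killed by a power of Frobenius, and that any $y \in S$ is identified in $S_{\mathrm{perf}}$ with a preimage of $y^{q}$ at the next stage. Moreover, since Frobenius is an automorphism of any perfect ring, perfection is left adjoint to the inclusion of perfect $\bF_p$-algebras into all $\bF_p$-algebras, hence preserves colimits and in particular pushouts:
\[
(A' \otimes_A B)_{\mathrm{perf}} \;\cong\; A'_{\mathrm{perf}} \otimes_{A_{\mathrm{perf}}} B_{\mathrm{perf}}.
\]
Since $f_{\mathrm{perf}}$ is an isomorphism by hypothesis, so is its base change $f'_{\mathrm{perf}}$, and translating back yields the $F$-equivalence of $f'$.

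The main obstacle is really the $F$-injectivity half of the $F$-equivalence claim. There is no transparent relationship between $\ker(f') \subset A'$ and $\ker(f) \subset A$, so a direct elementwise manipulation of tensor-product relations seems painful. A possible alternative to perfection would be to restrict to finitely generated subalgebras and invoke Proposition~\ref{prop:fg-F-equiv} to produce an inverse $g \colon B \to A$ with $fg = gf = F^n$; but $g$ is only a ring map, not an $A$-module map (its $A$-linearity is twisted by $F^n$), so extending it across the base change is delicate. The perfection approach bypasses this by genuinely inverting all the Frobenii at once, after which base change is automatic.
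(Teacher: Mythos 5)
Your argument is correct, and for the $F$-equivalence half it takes a genuinely different route from the paper. The $F$-surjectivity computation is essentially the paper's (the paper works with a single pure tensor and uses that elements admitting a power in $\im(f')$ form a subring; you handle a general sum directly via Frobenius additivity---same substance). For $F$-equivalence, the paper first reduces to finitely generated $\bF_p$-algebras, invokes Proposition~\ref{prop:fg-F-equiv}(b) to get $g\colon B \to A$ with $g\circ f = f\circ g = F^n$, produces $g'\colon B' \to A'$ by the universal property of the pushout $B' = A'\otimes_A B$ (applied to $F^n\colon A'\to A'$ and $B \xrightarrow{g} A \to A'$, which agree on $A$ because ring maps commute with Frobenius), and then treats general $A$, $B$ by writing them as directed unions of finitely generated subalgebras and passing to the limit. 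You instead invert all Frobenii at once: $F$-equivalences are exactly the maps made invertible by the colimit perfection $R\mapsto R_{\mathrm{perf}}$, and since perfection is left adjoint to the inclusion of perfect $\bF_p$-algebras it preserves pushouts, so $(f')_{\mathrm{perf}}$ is the base change of the isomorphism $f_{\mathrm{perf}}$ and hence an isomorphism. This is clean and avoids both the finite-generation step and the limit argument; the only point you leave implicit is that the pushout of perfect algebras is the plain tensor product $A'_{\mathrm{perf}}\otimes_{A_{\mathrm{perf}}}B_{\mathrm{perf}}$, i.e.\ that this tensor product is already perfect (the inverse of Frobenius is $r\otimes s \mapsto r^{1/p}\otimes s^{1/p}$, well defined by biadditivity and balancedness); alternatively one does not even need this, since the perfected square is a pushout square in perfect algebras and the pushout of an isomorphism along any map is an isomorphism. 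By contrast, the paper's route stays entirely within tools it has already set up and reuses elsewhere (Proposition~\ref{prop:fg-F-equiv}); also, your worry that transporting $g$ across the base change is ``delicate'' because $g$ is only Frobenius-semilinear over $A$ is overstated---the pushout universal property for ring maps handles it in one line, which is precisely what the paper does.
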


\begin{proof}
First suppose that $f$ is an $F$-surjection. Let $u=x \otimes y$ in $B'=A' \otimes_A B$ be given. We have $y^q=f(z)$ for some characteristic power $q$ and some $z \in A$. Thus $u^q=(zx^q) \otimes 1$ belongs to the image of $f'$. Since pure tensors span $B'$, it follows that $f'$ is an $F$-surjection.

Now suppose that $f$ is an $F$-equivalence. First suppose that $A$ and $B$ are finitely generated $\bF_p$-algebras. Let $g \colon B \to A$ be a ring homomorphism such that $g \circ f = F^n$ and $f \circ g = F^n$, which exists by Proposition~\ref{prop:fg-F-equiv}(b). Let $g' \colon B' \to A'$ be the unique ring homomorphism such that the diagram
\begin{displaymath}
\xymatrix@R=4pt@C=8ex{
& B \ar[rd] \ar@/^12pt/[rrd]^g \\
A \ar[ru]^f \ar[rd] && B' \ar@{..>}[r]^-{g'} & A' \\
& A' \ar[ru]^{f'} \ar@/_12pt/[rru]_{F^n} }
\end{displaymath}
commutes, where the top map $g$ is really the composition of $g$ with the natural map $A \to A'$. The map $g'$ exists by the mapping property for $B'$. We have $g' \circ f'=F^n$ by definition. Writing $B'=A' \otimes_A B$, we have $g'(x \otimes y)=F^n(x) \cdot g(y)$ and $f'(x)=x \otimes 1$. Thus
\begin{displaymath}
f'(g'(x \otimes y)) = (F^n(x) g(y)) \otimes 1 = F^n(x) \otimes f(g(y)) = F^n(x) \otimes F^n(y) = F^n(x \otimes y),
\end{displaymath}
and so $f' \circ g'=F^n$. It follows from Proposition~\ref{prop:fg-F-equiv}(a) that $f'$ is an $F$-equivalence.

We now treat the general case. Write $A=\bigcup_{i \in I_1} A_i$ and $B=\bigcup_{i \in I_2} B_i$, where both unions are directed, and $A_i$ and $B_i$ are finitely generated $\bF_p$-algebras. Let $I$ be the set of pairs $(i_1,i_2)$ such that $f$ induces an $F$-equivalence $A_{i_1} \to B_{i_2}$. For $i=(i_1,i_2)$, put $A_i=A_{i_1}$, $B_i=B_{i_2}$, and let $f_i$ be the map induced by $f$. One easily sees $A=\bigcup_{i \in I} A_i$ and $B=\bigcup_{i \in I} B_i$.

Let $f'_i \colon A' \to B'_i$ be the base change of $f_i$ along the map $A_i \to A'$. This is an $F$-equivalence by the first paragraph. Since tensor products are compatible with direct limits, we have $f'=\varinjlim f'_i$ and $B'=\varinjlim B'_i$. It now follows easily that $f'$ is an $F$-equivalence. Indeed, suppose $x \in \ker(f')$. Then $f'_i(x)=0$ in $B'_i$ for some $i$, and so $x$ is nilpotent since $f'_i$ is an $F$-equivalence. Thus $f$ is $F$-injective. Now let $y \in B'$ be given. Then $y$ comes from some $y' \in B'_i$ for some $i$. Since $f'_i$ is an $F$-equivalence, we have $(y')^q \in \im(f'_i)$ for some $i$, and so it follows that $y^q \in \im(f')$. Thus $f'$ is $F$-surjective.
\end{proof}

Suppose that $A \to B$ is a $K$-algebra homomorphism, and let $q$ be
a characteristic power. We define the \defn{Frobenius twist} of $B$
relative to $A$, denoted $B^{(q)}$, to be $A \otimes_A B$, where the $A$-module structure of $A$ comes from the $q$-th power map $A \to A$. There is a natural $A$-algebra
homomorphism $B^{(q)} \to B$ given by $a \otimes b \mapsto ab^q$, which
is called the \defn{Frobenius map} for $B$ relative to $A$. It is easy
to see that this is an $F$-equivalence. 

In almost all cases in this paper, we take Frobenius maps relative
to the ground field $K$. We warn the reader that, even in this case,
Frobenius twisting can introduce nilpotents. These constructions apply
to affine schemes as well.

Let $A$ be a $K$-algebra, let $B,C$ be $A$-algebras, and let $f:B
\to C$ be an $A$-algebra homomorphism. Then $f$ induces an $A$-algebra
homomorphism $f^{(q)}:B^{(q)} \to C^{(q)}, a \otimes b \mapsto a \otimes
f(b)$ between the Frobenius twists of $B$ and $C$ relative to $A$.

\begin{proposition} \label{prop:frob-equiv}
If an $A$-algebra homomorphism $f:B \to C$ is $F$-surjective 
(respectively, $F$-injective), then so is the induced homomorphism
$f^{(q)}:B^{(q)} \to C^{(q)}$.
\end{proposition}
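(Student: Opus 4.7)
The case $\chr(K) = 0$ forces $q = 1$, so $B^{(q)} = B$ and $f^{(q)} = f$; hence I may assume $p := \chr(K) > 0$ and write $q = p^n$. My plan is to handle $F$-surjectivity and $F$-injectivity separately, since the first is a direct application of an earlier result while the second is not.

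For $F$-surjectivity, I observe that $B^{(q)} = A \otimes_A B$ is, by its very definition, the base change of the $A$-algebra $B$ along the ring homomorphism $F^n \colon A \to A$, and analogously for $C^{(q)}$ and $f^{(q)}$. Proposition~\ref{prop:F-equiv-bc} then applies verbatim (the relevant maps are $\bF_p$-algebra homomorphisms since $p > 0$) to conclude that $f^{(q)}$ is $F$-surjective.

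For $F$-injectivity, Proposition~\ref{prop:F-equiv-bc} does not apply, and indeed $F$-injectivity is not preserved under arbitrary base change. Instead, I plan to exploit the commutative square of $A$-algebra homomorphisms
\[
\xymatrix{
B^{(q)} \ar[r]^{f^{(q)}} \ar[d]_{\phi_B} & C^{(q)} \ar[d]^{\phi_C} \\
B \ar[r]_{f} & C
}
\]
where $\phi_B(a \otimes b) = ab^q$ and $\phi_C$ is defined analogously; the paragraph preceding the proposition tells us these relative Frobenius maps are $F$-equivalences, in particular $F$-injective. Given $x \in \ker(f^{(q)})$, commutativity and $F$-injectivity of $f$ produce a characteristic power $q'$ with $\phi_B(x)^{q'} = 0$; rewriting this as $\phi_B(x^{q'}) = 0$ (since $\phi_B$ is a ring homomorphism) and invoking $F$-injectivity of $\phi_B$ yields $x^{q' q''} = 0$ for some characteristic power $q''$, whence $x$ is nilpotent.

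I do not anticipate any substantial obstacle. The only conceptual point worth highlighting is that one cannot simply cite Proposition~\ref{prop:F-equiv-bc} for both halves of the statement; the $F$-injective case must be handled by transporting the vanishing through the naturality square above, which effectively sandwiches the problem between two Frobenius $F$-equivalences.
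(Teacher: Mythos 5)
Your proof is correct, and your $F$-injectivity half is essentially the paper's own argument: the paper likewise applies the relative Frobenius $C^{(q)} \to C$ to the identity $f^{(q)}(u)=0$, obtains $f\bigl(\sum_i a_i b_i^q\bigr)=0$, and then sandwiches the conclusion between $F$-injectivity of $f$ and $F$-injectivity of the relative Frobenius $B^{(q)} \to B$ --- exactly your commutative square, written out on tensors. Where you genuinely diverge is the $F$-surjectivity half: the paper gives a direct two-line computation (for $u=\sum_i a_i\otimes c_i$ choose a single characteristic power $\tilde q$ with $c_i^{\tilde q}=f(b_i)$ for all $i$; then $u^{\tilde q}=f^{(q)}\bigl(\sum_i a_i^{\tilde q}\otimes b_i\bigr)$), whereas you cite Proposition~\ref{prop:F-equiv-bc}. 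That citation is sound in substance but not quite ``verbatim'': as stated, Proposition~\ref{prop:F-equiv-bc} base-changes a map whose source is the base ring, while your $f$ has source $B$, not $A$. To make the appeal literal, observe that $f^{(q)}$ is the base change of $f$ along the ring map $B\to B^{(q)}$, $b\mapsto 1\otimes b$, via the canonical identification $B^{(q)}\otimes_B C\cong C^{(q)}$ (associativity of the tensor product), under which $x\mapsto x\otimes 1$ becomes $a\otimes b\mapsto a\otimes f(b)$; alternatively, note that the proof of Proposition~\ref{prop:F-equiv-bc} goes through unchanged for an arbitrary $A$-algebra map. This is a one-line repair, not a gap. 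As for what each route buys: yours reuses the already-established base-change stability and avoids repeating the Frobenius-power computation, while the paper's is self-contained and does not require matching the Frobenius twist with a base change.
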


\begin{proof}
First assume that $f$ is $F$-surjective and consider an element $u=\sum_i
a_i \otimes c_i \in C^{(q)}$. For each $i$ there exists a characteristic
power $q_i$ such that $c_i^{q_i}$ lies in the image of $f$. Let
$\tilde{q}$ be the maximum of the $q_i$; then $c_i^{\tilde{q}}=f(b_i)$
for some $b_i \in B$. We then have
\[ u^{\tilde{q}}=\sum_i a_i^{\tilde{q}} \otimes f(b_i) 
= f^{(q)} \left(\sum_i a_i^{\tilde{q}} \otimes b_i \right); \]
so $f^{(q)}$ is $F$-surjective.

Next assume that $f:B \to C$ is $F$-injective and let $u=\sum_i a_i
\otimes b_i \in B^{(q)}$ satisfy $f^{(q)}(u)=0$. Applying the
Frobenius map $C^{(q)} \to C$ to this identity, we find that 
\[ 0 = \sum_i a_i f(b_i)^q = f\left(\sum_i a_i b_i^q\right) \]
By $F$-injectivity of $f$, there exists a characteristic
power $\tilde{q}$ such that $(\sum_i a_i b_i^q)^{\tilde{q}}=0$. The
left-hand side is the image of $u^{\tilde{q}}$ under the Frobenius map
$B^{(q)} \to B$. Since this Frobenius map is an $F$-equivalence, it
follows that some characteristic power of $u^{\tilde{q}}$, hence of
$u$, is zero. Hence $f^{(q)}$ is $F$-injective. 
\end{proof}

All of the above applies in the $\GL$-context too. Note in particular
that if $B$ is a $\GL$-algebra then so is the Frobenius twist $B^{(q)}$
of $B$ relative to $K$, and the Frobenius map $B^{(q)} \to B$ is a map
of $\GL$-algebras. We saw in Proposition~\ref{prop:fg-F-equiv} that
for an $F$-equivalence $f$ of finitely generated algebras, a suitable
power of Frobenius factors via $f$. We will require the following
analog of this in the $\GL$-context. In the $\GL$-setting, we do not
know that finitely $\GL$-generated algebras are noetherian---the best
we know so far is that they are noetherian ``up to radicals'' by
\cite{draisma}---so we get a weaker statement.

\begin{proposition} \label{prop:frob-factor}
Let $A$ and $B$ be finitely $\GL$-generated $\GL$-algebras with $A$
reduced, and let $f \colon A \to B$ be a map of $\GL$-algebras that is
an $F$-equivalence. Then there exists a characteristic power $q$ and a
map $g \colon B^{(q)} \to A$ of $\GL$-algebras such that $f \circ g$
is the Frobenius on $B$ relative to $K$. 
\end{proposition}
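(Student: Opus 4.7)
The plan is to produce a single characteristic power $q$ for which $b^q \in A$ holds for every $b \in B$. Once such $q$ is in hand, the Frobenius map $B^{(q)} \to B$, $c \otimes b \mapsto c b^q$, lands inside $f(A)$; since $A$ is reduced and $f$ is $F$-injective we have $\ker(f) = 0$, so $f$ identifies $A$ with $f(A)$ and the desired factorization $g \colon B^{(q)} \to A$ is forced upon us (and is automatically a $\GL$-algebra map). In characteristic zero, $f$ is already an isomorphism and I may take $q = 1$ and $g = f^{-1}$, so I assume $\chr(K) = p > 0$ from here on.

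Because $B$ is finitely $\GL$-generated, there is a finite-length polynomial subrepresentation $V \subset B$ that generates $B$ as a $K$-algebra. A Jordan--H\"older induction, combined with the fact that a simple polynomial representation is $\GL$-cyclic, lets me pick finitely many $v_1, \ldots, v_k \in V$ such that $V$ is the smallest polynomial subrepresentation containing them. Applying $F$-surjectivity of $f$ to each $v_i$ produces characteristic powers $q_i$ with $v_i^{q_i} \in A$; taking $q := \max_i q_i$ gives a single $q$ with $v_i^q \in A$ for all $i$.

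The key step is to promote this from the $v_i$ to every element of $V$. I will invoke Proposition~\ref{prop:FiniteExt} to obtain a finite extension $L/K$ for which $V \otimes L$ is the $L$-linear span of the orbits $\GL(L) \cdot (1 \otimes v_i)$. Any translate $g v_i$ with $g \in \GL(L)$ satisfies $(g v_i)^q = g(v_i^q) \in A_L$, and because $(u+w)^q = u^q + w^q$ and $(\ell w)^q = \ell^q w^q$ with $\ell^q \in L \subseteq A_L$ in characteristic $p$, the set $\{w \in V_L : w^q \in A_L\}$ is an $L$-subspace. It therefore equals all of $V_L$. A routine basis argument gives $A_L \cap B = A$ inside $B_L$ (take a $K$-basis of $L$ containing $1$ and compare coefficients), which descends the conclusion to $v^q \in A$ for every $v \in V$.

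Finally, the subset $S := \{b \in B : b^q \in A\}$ is a $K$-subalgebra of $B$ by the same characteristic-$p$ identities together with $c^q \in K \subseteq A$ for $c \in K$; it contains $V$, and since $V$ is $\GL$-stable it contains the $K$-subalgebra generated by $V$, which is $B$. Hence $b^q \in A$ for every $b \in B$, completing the proof. The main obstacle I expect is precisely the detour through the finite extension $L$: over a finite base field one cannot directly claim that the $\GL$-stable $K$-subspace $\{v \in V : v^q \in A\}$ is a polynomial subrepresentation, and Proposition~\ref{prop:FiniteExt} is the tool that sidesteps this.
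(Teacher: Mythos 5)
Your proof is correct and follows essentially the same route as the paper: use reducedness of $A$ together with $F$-injectivity to identify $A$ with a subalgebra of $B$, use finite $\GL$-generation to produce a single characteristic power $q$ with $B^q \subseteq A$, and then observe that the relative Frobenius $B^{(q)} \to B$ lands in $A$ and gives the desired $\GL$-equivariant factorization. The only difference is that you carefully justify, via Proposition~\ref{prop:FiniteExt} and the orbit-span argument over a finite extension (needed when $K$ is finite), the step the paper asserts in one line, namely that one $q$ works simultaneously for all elements of $B$.
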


\begin{proof}
Since $A$ is reduced, $f$ is actually injective, and so we regard it as a subalgebra of $B$. Since $f$ is $F$-surjective, for every element $x \in B$ there is some characteristic power $q$ such that $x^q \in A$. Since $B$ is finitely $\GL$-generated, there is a single $q$ that works for all $x$; we then have $B^q \subset A$. It follows that the Frobenius map $B^{(q)} \to B$ takes values in $A$, and provides the necessary factorization.
\end{proof}

\section{Torsors} \label{s:torsor}

Suppose we have a short exact sequence of polynomial representations
\begin{displaymath}
0 \to P_1 \to P_2 \to P_3 \to 0.
\end{displaymath}
We then obtain a map of $\GL$-varieties $\pi \colon \bA(P_2) \to \bA(P_1)$. How should we think of this map geometrically? For the purposes of this paper, it will be useful to view $\pi$ as an $\bA(P_3)$-torsor, meaning that the additive group $\bA(P_3)$ naturally has a simply transitive action on each fiber. This basic example leads to some less trivial examples: for example, if $X$ is a closed $\GL$-subvariety of $\bA(P_1)$ then $\pi^{-1}(X) \to X$ is also a $\bA(P_3)$-torsor. For this reason, torsors for vector groups will play an important role in our study of $\GL$-varieties. In \S \ref{s:torsor}, we develop the basic theory of these objects.

\subsection{Generalities}

Let $G$ be a group scheme over $K$ and let $X$ be a scheme over $K$. A \defn{$G$-scheme} over $X$ is a scheme $Y$ equipped with a map $\pi \colon Y \to X$ and an action of $G$ over $X$, that is, the action map $G \times Y \to Y$ is a map of schemes over $X$; this means that $G$ acts on the fibers of $\pi$. An example of a $G$-scheme over $X$ is $G \times X$; the $G$-action is on the first factor, and the map to $X$ is projection onto the second factor. If $Y$ is a $G$-scheme over $X$ and $X' \to X$ is any map, then the base change $Y'=Y \times_X X'$ is naturally a $G$-scheme over $X'$.

We say that a $G$-scheme $Y$ over $X$ is a \defn{$G$-torsor} if there is a Zariski open cover $X=\bigcup_{i \in I} U_i$ such that $Y \times_X U_i$ is isomorphic, as a $G$-scheme over $U_i$, to $G \times U_i$ for all $i \in I$. The \defn{trivial} $G$-torsor over $X$ is $G \times X$. Thus a torsor is, by definition, locally trivial\footnote{In general, asking for torsors to be trivial Zariski locally is too strong, and one should instead use a finer topology such as the fppf topology. For our purposes though, the Zariski topology will suffice.}. If $Y \to X$ is a $G$-torsor then its base change along a morphism $X' \to X$ is a $G$-torsor over $X'$.

Suppose that $\pi \colon Y \to X$ is a $G$-torsor. If $\pi$ admits a
section $s \colon X \to Y$ then $Y$ is isomorphic to the trivial
$G$-torsor: indeed, the map $G \times X \to Y$ defined by $(g,x)
\mapsto g s(x)$ is an isomorphism of $G$-torsors (one can check
this locally). If $s$ and $s'$ are two sections of $\pi$ then there is a morphism $g \colon X \to G$ such that $s'=gs$. Indeed, it suffices to prove this for the trivial torsor, and we can then take $g=s' \cdot s^{-1}$ (where here we consider $s$ and $s'$ to be morphisms $X \to G$).

\subsection{Vector groups} \label{ss:vecgp}

Let $V$ be a vector space over $K$, possibly of infinite dimension. Put $\bA(V)=\Spec(\Sym(V))$, which is an affine scheme whose
$K$-points form the dual space $V^*$. This is naturally a group scheme under addition. We now consider torsors for this group. We denote the action of $\bA(V)$ additively, since it will typically correspond to a translation action. We focus on the affine case, as that is what will be important in this paper.

Let $X=\Spec(A)$, and suppose that $Y=\Spec(B)$ is an $\bA(V)$-torsor over $X$. The action of $\bA(V)$ on $Y$ corresponds to a co-multiplication map
\begin{displaymath}
\Delta \colon B \to B \otimes \Sym(V).
\end{displaymath}
Given $f \in B$, we have a unique decomposition $\Delta(f)=\sum_{i \ge
0} \Delta_i(f)$, where $\Delta_i(f) \in B \otimes \Sym^i(V)$. We
define $B_{\le n}$ to be the set of elements $f \in B$ such that
$\Delta_i(f)=0$ for $i>n$; by convention, $B_{\le n}=0$ if $n<0$. Each
$B_{\le n}$ is an $A$-submodule of $B$, and we have $B_{\le 0}=A$
(this can be checked locally on $X$, where we can use a trivialization). We have $B_{\le n} \cdot B_{\le m} \subset B_{\le n+m}$, and so the $B_{\le n}$'s define a filtration on the ring $B$. There is a natural $A$-linear map
\begin{displaymath}
\init_n \colon B_{\le n} \to A \otimes \Sym^n(V),
\end{displaymath}
given by taking the initial term, as we now explain. If $Y$ is trivial
then, choosing a trivialization, we have $B_{\le n}=A \otimes
\Sym^{\le n}(V)$, and the above map takes the degree $n$ piece. This
construction is translation invariant, and thus independent of the
chosen trivialization. In general, we use this construction locally and glue. The map $\init_n$ clearly kills $B_{\le n-1}$, and so there is an induced map
\begin{displaymath}
\gr(B) = \bigoplus_{n \ge 0} B_n/B_{n-1} \to A \otimes \Sym(V).
\end{displaymath}
This map is a ring isomorphism (again, this can be checked locally).

\subsection{Derivatives} \label{ss:der}

Maintain the notation from \S \ref{ss:vecgp}. Given $f \in B$ and a $K$-point $r$ of $\bA(V)$ (i.e., an element of $V^*$), we define $\partial_r(f) \in B$ by applying $r$ to the second tensor factor of $\Delta_1(f) \in B \otimes V$. This is the directional derivative of $f$ in the $r$ direction. For $y \in Y$ and $t \in K$, we have
\begin{displaymath}
f(y+tr)=f(y)+t (\partial_r f)(y)+\cdots
\end{displaymath}
where the remaining terms have higher powers of $t$. If $\Delta_1(f)$ is non-zero then $\partial_r(f)$ is non-zero for some choice of $r$. We now investigate this condition more closely:

\begin{proposition} \label{prop:deriv}
Let $f$ be an element of $B$ that does not belong to $A$. Recall $p=\chr(K)$.
\begin{enumerate}
\item If $p = 0$ then $\Delta_1(f) \ne 0$.
\item If $p>0$ then there is a power $q$ of $p$ such that $\Delta_q(f)
\ne 0$ and $f \in A \cdot B^q$, i.e., $f$ is an $A$-linear combination of elements in $B^q$.
\end{enumerate}
\end{proposition}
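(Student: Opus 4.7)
The plan is to reduce to the trivial-torsor case via local triviality, then carry out an explicit polynomial calculation with the translation action. Both conclusions of the proposition are local on $X$: the non-vanishing of $\Delta_i(f) \in B \otimes \Sym^i(V)$ is detected after localization at the $a_j$, and $f \in A \cdot B^q$ is equivalent to vanishing of $f$ in the $A$-module $B/(A \cdot B^q)$, whose formation commutes with localization on $A$ (one checks directly that $(A\cdot B^q)_{a_j}=A_{a_j}\cdot B_{a_j}^q$). Passing to a trivializing chart, I may therefore assume $B = A \otimes \Sym(V)$ with coaction $x_\lambda \mapsto x_\lambda \otimes 1 + 1 \otimes v_\lambda$ for a chosen basis $\{v_\lambda\}$ of $V$.

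In this trivial setting $f$ is a polynomial in finitely many $x_\lambda$ with coefficients in $A$, and $\Delta(f)$ is the formal expansion $f(x+v) = \sum_\beta \partial^{[\beta]}(f) \otimes v^\beta$, where $\partial^{[\beta]}$ is the Hasse derivative. Thus $\Delta_i(f) = 0$ iff every Hasse derivative of total order $i$ vanishes. Since $f \notin A$, some $\Delta_i(f)$ with $i \ge 1$ is non-zero (otherwise $f$ would be fixed by the $\bA(V)$-action, forcing $f \in A$); let $q$ denote the smallest such $i$.

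For (a), if $p = 0$ the vanishing of $\Delta_1(f)$ would force every partial $\partial f/\partial x_\lambda$ to be zero, hence $f \in A$, so $q = 1$. For (b), with $p > 0$, I induct on $\deg f$ to show $q$ is a power of $p$ and $f \in A[x_\lambda^q] \subseteq A \cdot B^q$. If $\Delta_1(f) \neq 0$ then $q = 1 = p^0$ and there is nothing to prove. Otherwise every ordinary partial vanishes, and in characteristic $p$ this forces $f = \tilde f(x_1^p,\ldots,x_n^p)$ for some $\tilde f \in A[y_\lambda]$ of strictly smaller degree. Setting $u_\lambda := x_\lambda^p$ and $w_\lambda := v_\lambda^p$, one reads off $\Delta(f) = \tilde f(u+w)$, so $\Delta_i(f) = 0$ unless $p \mid i$ and the smallest $j \ge 1$ with $\Delta_j(\tilde f) \ne 0$ equals $q/p$. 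The inductive hypothesis gives $q/p$ a power of $p$ and $\tilde f \in A[y_\lambda^{q/p}]$, whence $q$ is a power of $p$ and $f = \tilde f(x^p) \in A[x_\lambda^q]$.

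The main obstacle is conceptual rather than technical: the proposition demands a single $q$ that simultaneously detects the ``first non-trivial derivative'' of $f$ and witnesses a Frobenius factorization of $f$. Choosing $q$ as the smallest index with $\Delta_q(f) \neq 0$ forces the analysis to determine exactly which orders of Hasse derivative can be the first to survive in characteristic $p$; the induction above handles this by peeling off one Frobenius factor at a time. The globalization step, although it requires noting that $A \cdot B^q$ behaves well under localization, is then routine.
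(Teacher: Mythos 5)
Your proof is correct and follows essentially the same route as the paper's: reduce to a trivializing cover by principal opens, do the explicit polynomial computation there---in characteristic $p$ peeling off Frobenius via $f=\tilde{f}(x_1^p,\ldots,x_n^p)$ and inducting on $\deg f$---and globalize using that membership in $A\cdot B^q$ localizes well (the paper phrases this instead as a partition-of-unity argument $1=\sum_j a_j h_j^{m_j}$, which is the same thing). The one point left implicit in your write-up (and treated only slightly more explicitly in the paper) is that on different charts the first non-vanishing order $q_j$ may differ, or $f_j$ may even lie in $A_j$; since each such $q_j$ is a $p$-power multiple of the global minimal order $q$, one has $A_j\cdot B_j^{q_j}\subseteq A_j\cdot B_j^{q}$, so a single $q$ works on every chart and your localization principle then finishes the argument.
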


\begin{proof}
First consider the case where $Y$ is trivial. Then we may assume that $B=A \otimes
\Sym(V)$ and the action of $\bA(V)$ on $Y$ is by translation on the second factor. We
may write $f=f(x) \in A[x_1,\ldots,x_n] \setminus A$ where the $x_i$
are linearly independent elements of $V$. Then $\Delta(f)=f(x+y) \in
A[x_1,\ldots,x_n,y_1,\ldots,y_n]$ and $\Delta_1(f)$ is the part of this
expression that is linear in $y$.

If $x_i$ appears in some monomial $x^\alpha$ with a nonzero coefficient $a
\in A$ in $f$, then the partial derivative $\frac{\partial f}{\partial
x_i}$ contains the term $\alpha_i \cdot a \cdot x^{\alpha-e_i}$. If
$\alpha_i \neq 0$ in $K$, then this term is nonzero and hence the
coefficient of $y_i$ in $\Delta_1(f)$, which equals this partial
derivative, is nonzero. This is the case, in particular, if $\chr(K)=0$.

If no such monomial with $\alpha_i \neq 0$ in $K$ exists for any
$i$, then $\chr(K)=p$ and $f(x)=\tilde{f}(x_1^p,\ldots,x_n^p)$ for some
$\tilde{f} \in A[x_1,\ldots,x_n]$ of total degree $\deg(f)/p$. Then
\[
\Delta(f)(x,y)=\Delta(\tilde{f})(x_1^p,\ldots,x_n^p,y_1^p,\ldots,y_n^p) \] 
and hence the
statement for $f$ follows from that for $\tilde{f}$. We are done by
induction on $\deg(f)$.

Now consider the general case: $X=\bigcup_{i \in I} U_i$, and each
$Y \times_X U_i$ is isomorphic, as an $\bA(V)$-scheme, to $U_i \times
\bA(V)$. We may further assume the $U_i$ to be simple open subsets of
$A$. Let $A_i=A[1/h_i]$ the coordinate ring of $U_i$ and let $B_i:=B
\otimes_A A_i$ be the coordinate ring of the pre-image of $U_i$ in
$Y$. Furthermore, let $f_i$ be the image of $f$ in $B_i$. Then $f_i \in
B_i \setminus A_i$ for some $i$.  If $\chr(K)=0$, then by the above we
find that $\Delta_1(f_i) \neq 0$, and this implies that $\Delta_1(f)
\neq 0$. If $\chr(K)>0$, then by the above we find a power $q$ of $p$
and an $i$ such that $\Delta_q(f_i) \neq 0$ and such that $f_j \in A_j
\cdot 
B_j^q$ for all $j$. This implies, first, that $\Delta_q(f) \neq 0$, and
second, that $h_j^{m_j} f \in A \cdot B^q$ for some $m_j \geq 0$. Since the
$U_j$ cover $X$, we have an expression $1=\sum_j a_j h_j^{m_j}$ where only finitely
many of the $a_j \in A$ are nonzero, and hence
\[ f=\sum_j a_j h_j^{m_j} f \in A \cdot B^q, \]
as desired. 
\end{proof}

\begin{remark}
One can define Hasse derivatives using $\Delta_n$, but we will not need them.
\end{remark}

\subsection{Subvarieties of torsors} \label{ss:subvartorsor}

Maintain the notation from \S \ref{ss:vecgp}. Let $Z \subset Y$ be a closed subscheme, with ideal $J \subset B$, and put $J_{\le 1}=J \cap B_{\le 1}$.

\begin{proposition} \label{prop:imm1}
Suppose that the map
\begin{displaymath}
\init_1 \colon J_{\le 1} \to A \otimes V
\end{displaymath}
is surjective. Then $Y \to X$ restricts to a closed immersion $Z \to X$.
\end{proposition}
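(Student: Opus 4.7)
The plan is to show that $B = A + J$, which is equivalent to surjectivity of the ring map $A \to B/J$ and hence to $Z \hookrightarrow X$ being a closed immersion. The approach is an induction on the filtration degree $n$, proving that $B_{\le n} \subseteq A + J$ for every $n \ge 0$.

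The first step is to verify that the filtration is exhaustive: $B = \bigcup_{n \ge 0} B_{\le n}$. The condition $\Delta_i(f)=0$ is local on $X = \Spec(A)$, and on a trivializing open $U_i$ the filtration becomes $A_i \otimes \Sym^{\le n}(V)$, which exhausts $B_i=A_i\otimes\Sym(V)$ since any element of $\Sym(V)$ has bounded degree. Because $X$ is quasi-compact a finite trivializing cover $U_1,\ldots,U_N$ suffices, so given $f \in B$ one takes the maximum of its local degrees to find $n$ with $f \in B_{\le n}$.

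Next I induct on $n$. The base case $n=0$ is $B_{\le 0}=A \subseteq A+J$. For the inductive step, the crucial tool is the isomorphism $\gr(B) \cong A \otimes \Sym(V)$ of graded rings from \S\ref{ss:vecgp}, which translates into the product formula
\begin{displaymath}
\init_{n+m}(fg) = \init_n(f) \cdot \init_m(g) \qquad \text{for } f \in B_{\le n},\ g \in B_{\le m}.
\end{displaymath}
Given $f \in B_{\le n}$ with $n \ge 1$, write $\init_n(f) = \sum_i a_i \otimes v_{i,1}\cdots v_{i,n}$ for some $a_i \in A$ and $v_{i,j} \in V$. By hypothesis, for each pair $(i,j)$ there is some $h_{i,j} \in J_{\le 1}$ with $\init_1(h_{i,j}) = 1 \otimes v_{i,j}$. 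Set
\begin{displaymath}
g := \sum_i a_i \, h_{i,1} h_{i,2} \cdots h_{i,n}.
\end{displaymath}
Then $g \in J$ (each summand contains at least one factor from $J$ since $n \ge 1$), $g \in B_{\le n}$ by multiplicativity of the filtration, and the product formula yields $\init_n(g) = \init_n(f)$. Hence $f-g \in B_{\le n-1}$, and by induction $f-g \in A+J$, so $f \in A+J$.

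The main technical subtleties are just the product rule for the $\init_n$ maps and the exhaustiveness of the filtration; both follow from local triviality of the torsor together with quasi-compactness of $X$. Once these are in hand the induction is immediate, and combined with $B = \bigcup_n B_{\le n}$ we conclude $B = A+J$, so $\Spec(B/J) \to \Spec(A)$ is a closed immersion.
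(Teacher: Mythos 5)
Your proof is correct, but it takes a different route from the paper's. The paper simply notes that the conclusion (surjectivity of $A \to B/J$) can be checked locally on $X$, reduces to the trivial torsor $B = A \otimes \Sym(V)$, and observes that the hypothesis then produces, for each $v \in V$, an element $1 \otimes v - a \in J$; since the elements $1 \otimes v$ generate $B$ as an $A$-algebra, surjectivity is immediate. You instead argue globally: using exhaustiveness of the filtration $B = \bigcup_n B_{\le n}$ and the multiplicativity encoded in the ring isomorphism $\gr(B) \cong A \otimes \Sym(V)$, you induct on $n$ to show $B_{\le n} \subseteq A + J$, lifting each monomial of $\init_n(f)$ to a product of elements of $J_{\le 1}$ and using injectivity of the graded map to drop down to $B_{\le n-1}$. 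Your version has the merit of never localizing the hypothesis (the paper's reduction tacitly uses that surjectivity of $\init_1$ persists under localization and that $J_{\le 1}$ is compatible with it), and it delivers the explicit global statement $B = A + J$; on the other hand it leans on the full strength of the $\gr$ isomorphism (including injectivity, needed for $\init_n(f-g)=0 \Rightarrow f-g \in B_{\le n-1}$) and on a separate locality-plus-quasi-compactness argument for exhaustiveness, both of which are themselves established by the same local trivialization the paper invokes directly, so the paper's argument is the shorter path to the same place.
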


\begin{proof}
We can verify the conclusion locally on $X$, and so we can assume that $Y$ is a trivial torsor. Thus $B=A \otimes \Sym(V)$. Our hypothesis means that for any $v \in V$, there is an element of $J$ of the form $1 \otimes v - a$ with $a \in A$. It follows that the map $A \to B/J$ is surjective, which is exactly what we want to show.
\end{proof}

The condition that $\init_1$ is surjective can be reformulated in a way that is sometimes more convenient:

\begin{proposition} \label{prop:imm2}
Suppose $K$ is algebraically closed, $X$ is of finite type over $K$ and $V$ is finite dimensional. Suppose moreover that for each $K$-point $y$ of $Y$ and each non-zero $r \in V^*$, there is some $f \in J_{\le 1}$ such that $(\partial_r f)(y) \ne 0$. Then $Z \to X$ is a closed immersion.
\end{proposition}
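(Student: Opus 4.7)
The plan is to reduce this to Proposition~\ref{prop:imm1} by showing that the pointwise derivative condition forces the surjectivity of $\init_1 \colon J_{\le 1} \to A \otimes V$. The key observation is that for $f \in B_{\le 1}$ one has $\init_1(f) = \Delta_1(f) \in A \otimes V$ (not merely in $B \otimes V$), so that $\partial_r f = (\id_A \otimes r)(\init_1(f))$ lies in $A$. Evaluating at a $K$-point $y$ (the value depends only on $x = \pi(y)$) therefore gives
\begin{displaymath}
(\partial_r f)(y) = r\bigl(\overline{\init_1(f)}\bigr),
\end{displaymath}
where the overline denotes reduction modulo the maximal ideal $\fm_x \subset A$ under $A \otimes V \to V$.

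Next, I would set $M := \init_1(J_{\le 1}) \subset A \otimes V$ and write $M(x) \subset V$ for its image in the fiber at $x$, and translate the hypothesis: the existence, for each nonzero $r \in V^*$, of some $f \in J_{\le 1}$ with $(\partial_r f)(y) \ne 0$ says exactly that no nonzero covector in $V^*$ kills $M(x)$, i.e., $M(x) = V$. Because $\pi \colon Y \to X$ is an $\bA(V)$-torsor and $\bA(V)$ admits a $K$-point, the map $Y(K) \to X(K)$ is surjective, so this conclusion holds for every $K$-point $x$ of $X$.

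Finally, consider $N := (A \otimes V)/M$. Since $X$ is of finite type over $K$ and $V$ is finite-dimensional, both $A \otimes V$ and $N$ are finitely generated $A$-modules. By the Nullstellensatz (using $K = \ol{K}$), every maximal ideal of $A$ has the form $\fm_x$ for some $K$-point $x$, and the previous step yields $N \otimes_A (A/\fm_x) = V/M(x) = 0$. Since $A$ is Jacobson and $N$ is finitely generated, Nakayama's lemma then forces $N = 0$, so $\init_1$ is surjective, and Proposition~\ref{prop:imm1} applies. There is no substantial obstacle here; the only subtle point is the correct identification of the hypothesis with the fiberwise surjectivity of $\init_1$, after which Nakayama does the work.
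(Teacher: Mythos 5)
Your proof is correct and follows essentially the same route as the paper's: both translate the pointwise derivative hypothesis into surjectivity of $\init_1$ modulo every maximal ideal of $A$ (using that $\partial_r f$ is constant on fibers for $f \in J_{\le 1}$ and that $K$-points of $X$ lift to $Y$), and then deduce surjectivity of $\init_1$ from Nakayama applied to the finitely generated cokernel, so that Proposition~\ref{prop:imm1} applies. The only cosmetic difference is your appeal to the Jacobson property, which is not needed beyond the Nullstellensatz identification of maximal ideals with $K$-points.
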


\begin{proof}
For a $K$-point $x$ of $X$, consider the map $\theta_x \colon J_{\le 1} \to V$ defined by first applying $\init_1$ and then evaluating the resulting element of $A \otimes V$ at $x$. If $r \in V^*$ then $\langle r, \theta_x(f) \rangle=(\partial_r f)(y)$ for any $K$-point $y$ of $Y$ above $x$ (note that $\partial_r f$ is constant on the fibers since $f \in J_{\le 1}$). Our assumption implies that $\theta_x$ is surjective; indeed, if the image of $\theta_x$ were contained in a proper subspace of $V$ then there would be some non-zero functional $r \in V^*$ vanishing on this subspace, and we would have $(\partial_r f)(x)=0$ for all $f \in J_{\le 1}$.

We have thus shown that the map
\begin{displaymath}
\init_1 \colon J_{\le 1} \to A \otimes V
\end{displaymath}
is surjective modulo $\fm$ for each maximal ideal $\fm$ of $A$. If $N$ is the cokernel of this map, then $N/\fm N=0$ for all $\fm$ which, by our assumptions, implies $N=0$. We thus see that $\init_1$ is surjective, and so we can apply Proposition~\ref{prop:imm1}.
\end{proof}

\subsection{Quotients}

Suppose now that we have an exact sequence of vector spaces
\begin{displaymath}
0 \to V_1 \to V_2 \to V_3 \to 0
\end{displaymath}
Note that $\bA(V_3)$ is a closed subgroup of $\bA(V_2)$, and that the
map $\bA(V_2) \to \bA(V_1)$ makes $\bA(V_2)$ into a trivial $\bA(V_3)$-torsor
over $\bA(V_1)$. 

\begin{proposition}
Let $\pi \colon Y \to X$ be an $\bA(V_2)$-torsor. Then $\pi$ naturally factors as $\pi_2 \circ \pi_1$, where $\pi_1 \colon Y \to Y'$ is an $\bA(V_3)$-torsor and $\pi_2 \colon Y' \to X$ is an $\bA(V_1)$-torsor.
\end{proposition}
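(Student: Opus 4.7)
The plan is to construct $Y'$ by a descent / gluing argument, using the fact that the subgroup $\bA(V_3) \hookrightarrow \bA(V_2)$ is canonical while the quotient $\bA(V_2) \to \bA(V_1)$ is a group homomorphism. Morally, $Y'$ should be the quotient $Y/\bA(V_3)$, but since we do not want to invoke quotient schemes in general, we will build it directly from trivializations.

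First, I would fix a vector space splitting $V_2 \cong V_1 \oplus V_3$ of the short exact sequence (which exists since we are over a field). This yields an isomorphism $\bA(V_2) \cong \bA(V_1) \times \bA(V_3)$ of group schemes under which the canonical quotient $p \colon \bA(V_2) \to \bA(V_1)$ becomes the first projection; in particular, $p$ is a surjective group homomorphism with kernel $\bA(V_3)$, and $\bA(V_2) \to \bA(V_1)$ is a trivial $\bA(V_3)$-torsor. Next, I would choose an open cover $\{U_i\}$ of $X$ together with trivializations $\phi_i \colon Y \times_X U_i \xrightarrow{\sim} \bA(V_2) \times U_i$ as $\bA(V_2)$-schemes over $U_i$. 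On overlaps $U_{ij}$ the transition $\phi_j \phi_i^{-1}$ is translation by an element $g_{ij} \in \bA(V_2)(U_{ij})$, and the collection $\{g_{ij}\}$ satisfies the cocycle identity.

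Now I would define a new cocycle $\bar g_{ij} := p \circ g_{ij} \in \bA(V_1)(U_{ij})$; the cocycle identity for $\{\bar g_{ij}\}$ follows from that of $\{g_{ij}\}$ together with the fact that $p$ is a group homomorphism. Gluing the trivial torsors $\bA(V_1) \times U_i$ along the $\bar g_{ij}$ produces an $\bA(V_1)$-torsor $\pi_2 \colon Y' \to X$. The local maps $\mathrm{id} \times p \colon \bA(V_2) \times U_i \to \bA(V_1) \times U_i$ are compatible with the translation transitions (because $p(v + g_{ij}) = p(v) + \bar g_{ij}$), so they glue to a map $\pi_1 \colon Y \to Y'$ fitting into $\pi = \pi_2 \circ \pi_1$; moreover $\pi_1$ is $\bA(V_3)$-equivariant for the action of $\bA(V_3) \subset \bA(V_2)$ on $Y$, because $\bA(V_3) = \ker(p)$.

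Finally, I would verify that $\pi_1$ is an $\bA(V_3)$-torsor. Over each $U_i$, the map $\pi_1$ identifies with $\mathrm{id} \times p \colon \bA(V_2) \times U_i \to \bA(V_1) \times U_i$, which via the chosen splitting $\bA(V_2) \cong \bA(V_1) \times \bA(V_3)$ becomes a trivial $\bA(V_3)$-torsor; the preimages in $Y'$ of the $U_i$ give a Zariski open cover of $Y'$ over which $\pi_1$ is trivial. The main thing to keep careful track of is that, although the identification $\bA(V_2) \cong \bA(V_1) \times \bA(V_3)$ requires a splitting, both the subgroup $\bA(V_3) \subset \bA(V_2)$ and the quotient map $p$ are canonical, so the construction of $Y'$ and of $\pi_1, \pi_2$ does not depend (up to isomorphism) on the splitting; this is the only nontrivial bookkeeping and is what makes the gluing go through.
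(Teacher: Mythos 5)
Your argument is correct, but it takes a genuinely different route from the paper. The paper constructs $Y'$ intrinsically: in the affine case $Y=\Spec(B)$ it sets $Y'=\Spec(B')$ where $B'\subset B$ is the subalgebra of $\bA(V_3)$-invariant functions, observes that this formation is compatible with localizing on $X$, and then verifies the two torsor claims locally on a trivializing cover, where everything reduces to the model case $\bA(V_2)\times X \to \bA(V_1)\times X \to X$. You instead build $Y'$ by descent: extract the \v{C}ech cocycle $g_{ij}\in \bA(V_2)(U_{ij})$ of $Y$, push it forward along the group quotient $p\colon \bA(V_2)\to\bA(V_1)$ (whose kernel is the subgroup $\bA(V_3)$), and glue; the splitting $V_2\cong V_1\oplus V_3$ is then only used to see that $\mathrm{id}\times p$ is locally a trivial $\bA(V_3)$-torsor. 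Both arguments are sound, and your verification steps (transition maps of a trivialized torsor are translations, the cocycle identity is preserved by the homomorphism $p$, $\pi_1$ is $\bA(V_3)$-invariant since $\bA(V_3)=\ker p$, and $\pi_1$ is trivialized over the opens $\pi_2^{-1}(U_i)$ of $Y'$) are all correct. What each approach buys: the paper's invariant-subring definition makes the word ``naturally'' automatic and is exactly the description of $Y/\bA(V_3)$ used later, at the cost of being spelled out only in the affine case; your gluing construction works uniformly in the non-affine setting and exhibits the torsor classes explicitly, but the naturality requires a bit more bookkeeping than you acknowledge --- you check independence of the chosen splitting, but independence of the cover and of the trivializations $\phi_i$ (equivalently, that changing the cocycle by a coboundary yields a canonically isomorphic $Y'$ compatible with $\pi_1$) also needs to be recorded; this is routine, e.g.\ because $\pi_1$ is an epimorphism and $Y'$ with its $\bA(V_1)$-action is determined by $(Y,\pi_1)$, but it is part of the claim.
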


\begin{proof}
First suppose that $X=\Spec(A)$ is affine, so that $Y=\Spec(B)$ is
also affine. Let $B' \subset B$ be the subalgebra consisting of functions that are invariant under $\bA(V_3)$, and put $Y'=\Spec(B')$. Clearly, $\pi$ factors as $Y \to Y' \to X$. Formation of $Y'$ is compatible with passing to open subsets of $X$, so the claims in the proposition can be checked locally. We can thus assume $Y=\bA(V_2) \times X$ is a trivial torsor. One finds $Y'=\bA(V_1) \times X$, and the result follows. We will not need the non-affine case, but it can be proved in a similar fashion.
\end{proof}

In the context of the above proposition, we call $Y'$ the \defn{quotient} of $Y$ by $\bA(V_3)$, and denote it $Y/\bA(V_3)$.

\subsection{Frobenius twists}

Suppose now that the characteristic $p$ of $K$ is non-zero. We investigate how $\bA(V)$-torsors interact with Frobenius twists.

\begin{proposition} \label{prop:torsor-frob}
Suppose that $Y=\Spec(B)$ is an $\bA(V)$-torsor over $X=\Spec(A)$, and let $q$ be a power of $p$. Then $Y^{(q)}$ is naturally an $\bA(V^{(q)})=\Spec(B^{(q)})$-torsor over $X$, and we have a natural identification $B^{(q)}=A \cdot B^q \subset B$.
\end{proposition}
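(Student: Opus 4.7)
The plan is to reduce both assertions to the case of a trivial torsor by working on a Zariski cover of $X$ on which $Y$ trivializes. So let $X = \bigcup_i U_i$ with $U_i = \Spec(A_i)$ and $Y \times_X U_i \cong \bA(V) \times_K U_i$; write $B_i$ for the coordinate ring, so after choosing a $K$-basis $\{e_j\}$ of $V$ we have $B_i = A_i[e_j]$.

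In this trivial setting I would compute the Frobenius twist $B_i^{(q)} = A_i \otimes_{A_i} B_i$ (with the left factor carrying the $A_i$-module structure coming from the $q$-th power map) explicitly. The defining relation $xc^q \otimes f = x \otimes cf$ for $c \in A_i$ lets me absorb any scalar into the left factor at the cost of raising it to the $q$-th power, so every element of $B_i^{(q)}$ is an $A_i$-linear combination of the $1 \otimes e^\alpha$. Combining the decomposition $B_i = \bigoplus_\alpha A_i \cdot e^\alpha$ with the isomorphism $A_i \otimes_{A_i} A_i \cong A_i$ given (in the twisted sense) by $a \otimes b \mapsto ab^q$, I obtain that $\{1 \otimes e^\alpha\}$ is in fact a free $A_i$-basis of $B_i^{(q)}$. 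Next, I would check that the assignment $1 \otimes e_j \mapsto 1 \otimes e_j$, from $V^{(q)} = K \otimes_K V$ into $B_i^{(q)}$, extends to an $A_i$-algebra isomorphism $A_i \otimes_K \Sym V^{(q)} \cong B_i^{(q)}$ that is natural in $V$, since under a change of basis $e_j \mapsto \sum_\ell a_{j\ell} e_\ell$ both sides transform by the matrix $(a_{j\ell}^q)_{j\ell}$. This exhibits $Y^{(q)} \times_X U_i$ as the trivial $\bA(V^{(q)})$-torsor over $U_i$, and realizes the Frobenius map $B_i^{(q)} \to B_i$ as the substitution $e_j \mapsto e_j^q$, which is clearly injective with image $A_i[e_j^q] = A_i \cdot B_i^q$.

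To conclude, I would globalize. Since the local identifications $B_i^{(q)} \cong A_i \otimes_K \Sym V^{(q)}$ are natural, the corresponding trivializations of $Y^{(q)}|_{U_i}$ agree on overlaps---their transition cocycle is obtained by composing the original $\bA(V)$-valued cocycle for $Y$ with the Frobenius isogeny $\bA(V) \to \bA(V^{(q)})$---and they assemble into a global $\bA(V^{(q)})$-torsor structure on $Y^{(q)}$ over $X$. For the identification $B^{(q)} = A \cdot B^q$, injectivity of $B^{(q)} \to B$ follows from the trivial case by faithfully flat descent along the cover $A \to \prod_i A_i$, while the fact that the image equals $A \cdot B^q$ is immediate from the formula $a \otimes b \mapsto ab^q$. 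The main difficulty I anticipate is the bookkeeping needed to verify the naturality of the identification $B_i^{(q)} \cong A_i \otimes_K \Sym V^{(q)}$, so that it is basis-free and survives the gluing on overlaps.
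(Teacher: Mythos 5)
Your proposal is correct, but it globalizes differently from the paper, so a brief comparison is in order. The paper first identifies $B^{(q)}$ with the subring $A \cdot B^q \subset B$ via the relative Frobenius $a \otimes b \mapsto ab^q$ (surjectivity onto $A\cdot B^q$ is immediate, injectivity is checked on a trivializing cover, exactly as in your local computation), and then obtains the coaction \emph{globally for free}: the original comultiplication $\Delta \colon B \to B \otimes \Sym(V)$ visibly carries $A\cdot B^q$ into $A\cdot B^q \otimes \Sym(V)^q \cong B^{(q)} \otimes \Sym(V^{(q)})$, and only the verification that this $\Delta^{(q)}$ is a torsor structure is done locally. You instead compute $B_i^{(q)} \cong A_i \otimes \Sym(V^{(q)})$ on a trivializing cover and reassemble $Y^{(q)}$ by \v{C}ech gluing, observing that the transition cocycle of $Y^{(q)}$ is the image of that of $Y$ under the map $\bA(V) \to \bA(V^{(q)})$. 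This is a valid route and has the virtue of making the twisted torsor completely explicit (including its cocycle), but it forces you to carry out exactly the bookkeeping you flag: the compatibility $B^{(q)} \otimes_A A_i \cong (B_i)^{(q)}$ of the relative Frobenius twist with localization, and the gluing of the local coactions into a single map $B^{(q)} \to B^{(q)} \otimes \Sym(V^{(q)})$ --- steps the paper's restriction-of-$\Delta$ trick bypasses entirely. Two small technical remarks: for the injectivity of $B^{(q)} \to B$ you do not need faithfully flat descent along $A \to \prod_i A_i$ (which for an infinite cover is not obviously flat anyway); since $X$ is affine you may take a finite cover by basic opens, and then the statement is just the local nature of an element being zero, applied after the localization compatibility above. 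Also, your change-of-basis check only covers matrices over $K$; what actually matters for gluing is that translations by sections over $U_{ij}$ twist to translations by their $q$-th power images, which is the computation $1 \otimes (e_k + c_k) = 1 \otimes e_k + c_k^q \otimes 1$, so make that the verification you record.
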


\begin{proof}
By definition, $B^{(q)}=A \otimes_A B$, where $A \to A$ is the map $a
\mapsto a^q$. Let $F \colon B^{(q)} \to B,\ a \otimes b \mapsto ab^q$
be the Frobenius map for $B$ relative to $A$ (see \S\ref{ss:frob}).
The image of $F$ is $A \cdot B^q$. One checks locally that $F$ is injective, which proves that $B^{(q)} \cong A \cdot B^q$.

The comultiplication
\begin{displaymath}
\Delta \colon B \to B \otimes \Sym(V)
\end{displaymath}
is an $A$-algebra map. It clearly maps $AB^q$ into $AB^q \otimes \Sym(V)^q$. Identifying $\Sym(V)^q$ with $\Sym(V^{(q)})$, we thus have a map
\begin{displaymath}
\Delta^{(q)} \colon B^{(q)} \to B^{(q)} \otimes \Sym(V^{(q)}).
\end{displaymath}
Computing locally shows that this defines a torsor structure on $Y^{(q)}$.
\end{proof}

One useful application of Frobenius twists is that it allows us to reduce to the case where derivatives are non-zero, as the following proposition shows.

\begin{proposition} \label{prop:torsor-frob-2}
Maintain the notation from Proposition~\ref{prop:torsor-frob}. Suppose $f \in B$ has $\Delta_q(f) \ne 0$ and $\Delta_i(f)=0$ for $0<i<q$. Then $f \in B^{(q)}$ and $\Delta^{(q)}_1(f)=\Delta_q(f)$.
\end{proposition}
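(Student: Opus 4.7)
My plan is to prove the two claims in turn, leveraging the structural results from Propositions~\ref{prop:deriv} and \ref{prop:torsor-frob}.

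For the membership $f \in B^{(q)}$, I would apply Proposition~\ref{prop:deriv}(b) directly. Inspecting the inductive argument in that proof---which, working in a local trivialization $B|_U \cong A|_U \otimes \Sym(V)$ where $\Delta$ is the translation coproduct, shows that $\Delta_1(f)=0$ forces $f$ to be locally a polynomial in $p$-th powers of the coordinates, and iterates---one sees that the characteristic power there is exactly the smallest $q$ for which $\Delta_q(f) \ne 0$. This matches the $q$ of our hypothesis (note that $\Delta_q(f) \ne 0$ rules out $f \in A$, so the proposition applies). Proposition~\ref{prop:deriv}(b) therefore yields $f \in A \cdot B^q$, and Proposition~\ref{prop:torsor-frob} identifies this with $B^{(q)}$. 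Equivalently, expanding $f_j := f|_{U_j} = \sum_\alpha c_{j,\alpha} e^\alpha$ in a basis of $V$, the assumption $\Delta_i(f)=0$ for $0 < i < q$ forces every multinomial coefficient $\binom{\alpha}{\beta} \in K$ with $0 < |\beta| < q$ to vanish, so by Lucas's theorem every coordinate of every $\alpha$ appearing is divisible by $q$; the partition-of-unity argument from the proof of Proposition~\ref{prop:deriv}(b) then globalizes.

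For the formula $\Delta^{(q)}_1(f) = \Delta_q(f)$, I would unwind the identifications of Proposition~\ref{prop:torsor-frob}: the twisted comultiplication $\Delta^{(q)}$ is the restriction of $\Delta$ to $B^{(q)} = A \cdot B^q$, landing in $B^{(q)} \otimes \Sym(V^{(q)}) \subset B \otimes \Sym(V)$, where $\Sym(V^{(q)}) = A \cdot \Sym(V)^q$ via Frobenius. Under this embedding, $\Sym^i(V^{(q)})$ corresponds to the $A$-span of $q$-th powers of elements of $\Sym^i(V)$, which lies in degree $iq$ of $\Sym(V)$; in particular, non-zero graded pieces of $A \cdot \Sym(V)^q$ only appear in $\Sym(V)$-degrees divisible by $q$. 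Matching the $i$-th graded piece of $\Delta^{(q)}(f)$ (in the $\Sym(V^{(q)})$-grading) with the degree-$iq$ piece of $\Delta(f)$ (in the $\Sym(V)$-grading) then gives $\Delta^{(q)}_i(f) = \Delta_{iq}(f)$ for all $i \ge 0$, and the case $i = 1$ is the desired identity.

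The principal difficulty lies in the second step's bookkeeping: the two gradings on $\Sym(V^{(q)}) \subset \Sym(V)$ are related by a factor of $q$, and one must verify that $\Delta^{(q)}$ respects the grading as advertised. Once this correspondence is in place---essentially already guaranteed by the construction in Proposition~\ref{prop:torsor-frob}---both steps are formal, and the first step is a direct citation of Proposition~\ref{prop:deriv}(b) once one observes the compatibility of the $q$'s.
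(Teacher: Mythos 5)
Your proposal is correct and follows essentially the same route as the paper: membership $f \in A\cdot B^q = B^{(q)}$ comes from Proposition~\ref{prop:deriv}(b) (your remark that any $q'$ produced there automatically works, since $\Delta_i(f)=0$ for $0<i<q$ forces $q' \ge q$ and hence $A\cdot B^{q'} \subseteq A\cdot B^q$, is exactly the point the paper leaves implicit, and your Lucas-theorem computation is just an unwinding of that proposition's local argument), and the identity $\Delta^{(q)}_1(f)=\Delta_q(f)$ follows from the description of $\Delta^{(q)}$ in the proof of Proposition~\ref{prop:torsor-frob}, which gives $\Delta^{(q)}_i(f)=\Delta_{qi}(f)$ because $\Sym^i(V^{(q)})$ sits in degree $qi$ of $\Sym(V)$. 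No gaps; this matches the paper's proof with a bit more bookkeeping spelled out.
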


\begin{proof}
We have already seen (Proposition~\ref{prop:deriv}) that $f$ belongs to $A \cdot B^q=B^{(q)}$. The description of $\Delta^{(q)}$ given in the proof of Proposition~\ref{prop:torsor-frob} shows that $\Delta^{(q)}_i(f)=\Delta_{qi}(f)$. In particular, $\Delta_1^{(q)}(f)=\Delta_q(f)$, as claimed.
\end{proof}

Geometrically, the fact that $f$ belongs to $B^{(q)}$ means that $f$ descends to $Y^{(q)}$, i.e., $f$ is the pull-back of a function (which we still denote by $f$) under the map $Y \to Y^{(q)}$. Moreover, if we regard $f$ as a function on $Y^{(q)}$ then it has some non-vanishing directional derivative with respect to the action of $\bA(V^{(q)})$.

Here is how we will apply the above proposition. Suppose $Z$ is a closed subvariety of $Y$, and let $f$ be a function vanishing on $Z$. We can carry out a certain argument when $f$ has a non-vanishing derivative. Suppose $\Delta_1(f)=0$, and let $q$ be minimal such that $\Delta_q(f) \ne 0$. Then $f$ descends to $Y^{(q)}$, has non-zero derivative there, and vanishes on the scheme-theoretic image $Z'$ of $Z$ in $Y^{(q)}$. We can thus apply our argument to $Z'$.

\subsection{$\GL$-varieties}

Let $V$ be a polynomial representation. Then $\bA(V)$ is a group $\GL$-variety, and we can speak of $\bA(V)$-torsors in the category of $\GL$-schemes. To be precise, an $\bA(V)$-torsor over a (quasi-affine) $\GL$-scheme $X$ is a (quasi-affine) $\GL$-scheme $Y$ equipped with a map of $\GL$-schemes $Y \to X$ and an action of $\bA(V)$ such that the action map $\bA(V) \times Y \to Y$ is a map of $\GL$-schemes over $X$ and, ignoring the $\GL$-actions, makes $Y$ into a torsor over $X$. In other words, the local triviality condition is only enforced after forgetting the $\GL$-actions.

The results of this section can be applied directly to $\bA(V)$-torsors, since these are just ordinary torsors of the underlying schemes. This is why it is important that $V$ was allowed to be infinite dimensional in the preceding discussion. The constructions of this section also apply in the $\GL$-setting; for example, all natural maps will be $\GL$-equivariant.

We first give the main example of interest. Suppose we have a short exact sequence of polynomial representations
\begin{equation} \label{eq:ses}
0 \to Q \to P \to V \to 0.
\end{equation}
Then $\pi \colon \bA(P) \to \bA(Q)$ is an $\bA(V)$-torsor. This torsor is trivial if we forget the $\GL$-actions, since \eqref{eq:ses} splits in the category of vector spaces. However, if \eqref{eq:ses} fails to split in the category of representations, then $\pi$ is non-trivial as a $\GL$-equivariant torsor, meaning there is no isomorphism with the trivial torsor that preserves the $\GL$-actions. If $Z$ is a closed $\GL$-subvariety of $\bA(Q)$ then $\pi^{-1}(Z) \to Z$ is also an $\bA(V)$-torsor. The torsors we consider ultimately arise in this manner.

We will require the following version of Proposition~\ref{prop:imm2} in the $\GL$-setting:

\begin{proposition} \label{prop:imm3}
Consider the following situation:
\begin{itemize}
\item $K$ is algebraically closed.
\item $V$ is a finite length polynomial representation.
\item $X$ is an affine $\GL$-prevariety.
\item $\pi \colon Y \to X$ is an $\bA(V)$-torsor of affine $\GL$-schemes.
\item $Z$ is a closed $\GL$-subscheme of $Y$ with ideal $J$.
\end{itemize}
Suppose that for each $K$-point $y$ of $Y$ and each non-zero $r \in V^*$ there is some $f \in J_{\le 1}$ such that $(\partial_r f)(y) \ne 0$. Then $\pi$ restricts to a closed immersion $Z \to X$.
\end{proposition}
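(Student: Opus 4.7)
The plan is to apply Proposition~\ref{prop:imm2} at each finite-dimensional slice and piece the results together. By Proposition~\ref{prop:imm1}, it suffices to show that $\init_1 \colon J_{\le 1} \to A \otimes V$ is surjective, where $A$ and $B$ denote the coordinate rings of $X$ and $Y$. Since $A$, $V$, $J_{\le 1}$, and $A \otimes V$ are polynomial $\GL$-representations and evaluation at $K^n$ is exact, $\init_1$ is the direct limit of its restrictions $J_{\le 1}\{K^n\} \to A\{K^n\} \otimes V\{K^n\}$; so it suffices to prove surjectivity at each level $n$.

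For fixed $n$, the data fit the hypotheses of Proposition~\ref{prop:imm2}: $X\{K^n\}$ is of finite type over $K$ (as $A$ is finitely $\GL$-generated), $V\{K^n\}$ is finite-dimensional, and $Y\{K^n\} \to X\{K^n\}$ is an $\bA(V\{K^n\})$-torsor, obtained by evaluating the torsor isomorphism $\bA(V) \times Y \cong Y \times_X Y$ at $K^n$ (using that tensor and fiber products commute with this evaluation on polynomial functors). It remains to verify the pointwise condition of Proposition~\ref{prop:imm2} at level $n$: for each $\xi \in X\{K^n\}(K)$, the associated map $\theta^{(n)}_\xi \colon J\{K^n\}_{\le 1} \to V\{K^n\}$ (defined analogously to $\theta_x$) is surjective. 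Transferring the $\GL$-level hypothesis (which says $\theta_x$ is surjective for every $x \in X(K)$) to this level-$n$ statement is the main obstacle.

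The transfer uses the natural $W$-grading on polynomial functors coming from the splitting $\bV = K^n \oplus W$ with $W = K^{\infty-n}$: $A$ decomposes as $A\{K^n\} \oplus A^{>0}$, where $A^{>0}$ collects the pieces of positive $W$-degree. Since multiplication respects the $W$-grading, $A^{>0}$ is an ideal, and we obtain a ring-theoretic retraction $\rho_A \colon A \to A\{K^n\}$ of the inclusion. Identical constructions furnish a retraction $\rho_B \colon B \to B\{K^n\}$ and a linear projection $\rho_V \colon V \to V\{K^n\}$. Using $\GL$-stability of $J$ and $B_{\le 1}$ together with a Vandermonde argument applied to the scaling subgroup of $G(n)$ (valid because $K$ is algebraically closed, hence infinite), $\rho_B$ sends $J_{\le 1}$ into $J\{K^n\}_{\le 1}$; by $\GL$-equivariance, $\init_1$ commutes with these projections. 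Given $\xi \in X\{K^n\}(K)$ and $v \in V\{K^n\}$, lift $\xi$ to $x := \xi \circ \rho_A \in X(K)$; by construction $x$ vanishes on $A^{>0}$, so $a(x) = \rho_A(a)(\xi)$ for all $a \in A$. The $\GL$-hypothesis yields $f \in J_{\le 1}$ with $\theta_x(f) = v$; then $\rho_B(f) \in J\{K^n\}_{\le 1}$, and a brief calculation gives $\theta^{(n)}_\xi(\rho_B(f)) = \rho_V(\theta_x(f)) = v$. Since $v$ was arbitrary, $\theta^{(n)}_\xi$ is surjective, completing the proof.
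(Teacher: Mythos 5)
Your overall strategy is sound and, once unwound, it is essentially the paper's own argument in a different packaging. The paper proves Proposition~\ref{prop:imm3} by running the proof of Proposition~\ref{prop:imm2} verbatim at the infinite level (for each $K$-point $x$ of $X$ the hypothesis makes $\theta_x$ surjective, so $\init_1$ is surjective modulo $\ker(x)$ for every $x$) and then replacing the finite-dimensional Nakayama step by a lemma: a finitely $\GL$-generated module $M$ over a finitely $\GL$-generated algebra $A$ with $M/\ker(x)M=0$ for all $K$-points $x$ is zero. The proof of that lemma is exactly your transfer device: evaluate at a finite-dimensional space and lift $K$-points of $A\{U\}$ to $K$-points of $A$ along the surjection induced by a linear projection $\bV\to U$ --- which is your $x=\xi\circ\rho_A$. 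So your reduction to level $n$, your use of the retractions $\rho_A,\rho_B,\rho_V$, and your computation $\theta^{(n)}_\xi(\rho_B(f))=\rho_V(\theta_x(f))$ reproduce the same core ideas; also note that stability of $J$ and $B_{\le 1}$ under $\rho_B$ is immediate from the fact that they are subobjects in the category of polynomial representations (hence subfunctors, so stable under $B\{\pi\}$ for any linear map $\pi$), so the Vandermonde argument, while workable, is not needed (and ``the scaling subgroup of $G(n)$'' is not literally inside $\GL$; you would have to use finite-block scalings).

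The one joint that does not hold up as written is the claim that $Y\{K^n\}\to X\{K^n\}$ ``is an $\bA(V\{K^n\})$-torsor, obtained by evaluating the torsor isomorphism at $K^n$.'' Evaluating $\bA(V)\times Y\cong Y\times_X Y$ only gives a pseudo-torsor (an action whose action map onto the fiber product is an isomorphism); the paper's notion of torsor requires Zariski-local triviality, and that is genuinely used in the machinery you want to quote (the local computation showing $\init_1(B_{\le 1})\subset A\otimes V$, and Proposition~\ref{prop:imm1} inside the proof of Proposition~\ref{prop:imm2}). Local triviality of the evaluated map is not obvious and you give no argument for it; moreover Proposition~\ref{prop:imm2} concludes a closed immersion, not surjectivity of $\init_1$, so you are really reusing its proof rather than its statement. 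Fortunately your argument does not need any level-$n$ torsor structure: define $\init_1^{(n)}$ simply as the restriction of the infinite-level $\init_1$ (which lands in $A\{K^n\}\otimes V\{K^n\}$ by functoriality of $\Delta$), verify pointwise surjectivity by your transfer, and then the only finite-dimensional input required is the Nakayama-type step (cokernel of $\init_1^{(n)}$ is a finitely generated module over the finitely generated $K$-algebra $A\{K^n\}$, killed modulo every maximal ideal), which uses no torsor structure at all. With that repair --- or by running the argument once at the infinite level and isolating the finite-level reduction in a module lemma, as the paper does --- your proof is complete.
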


The proof is exactly the same as that of Proposition~\ref{prop:imm2}, except that we must use the following lemma:

\begin{lemma}
Let $A$ be a $\GL$-algebra that is finitely $\GL$-generated over $K$
and let $M$ be a finitely $\GL$-generated $A$-module such that
$M/\ker(x) M=0$ for all $K$-algebra homomorphisms $x:A \to K$. Then $M=0$.
\end{lemma}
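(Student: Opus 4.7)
My strategy is to reduce to classical commutative algebra on the finite-dimensional sections $A\{K^n\}$, $M\{K^n\}$, and then transport the resulting $K$-point back to $A$ using the functoriality of polynomial functors.

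Assuming $M \ne 0$, the identity $M = \bigcup_n M\{K^n\}$ (polynomial functors preserve the filtered colimit $\bV = \bigcup_n K^n$) lets me choose $n$ with $M\{K^n\} \ne 0$. For such $n$, the algebra $A\{K^n\}$ is finitely generated over $K$ (as a quotient of $\Sym(P\{K^n\})$ for $P$ a finite-length $\GL$-generator of $A$), and $M\{K^n\}$ is a finitely generated $A\{K^n\}$-module, generated by $Q\{K^n\}$ for any finite-length $\GL$-generator $Q$ of $M$ over $A$ (using $M = A \cdot Q$ evaluated at $K^n$). The Nullstellensatz together with Nakayama's lemma then yields a $K$-algebra homomorphism $\bar{x} \colon A\{K^n\} \to K$ with $M\{K^n\}/\ker(\bar{x})\,M\{K^n\} \ne 0$.

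To leverage $\bar{x}$ I would use the $K$-linear projection $\bV \to K^n$ that retracts the canonical inclusion. By the functoriality of polynomial functors and the naturality of the multiplication on $A$ and of the $A$-action on $M$, it induces a $K$-algebra homomorphism $\pi \colon A \to A\{K^n\}$ and an additive map $\pi \colon M \to M\{K^n\}$ satisfying $\pi(a\mu) = \pi(a)\pi(\mu)$, both restricting to the identity on their $K^n$-sections. Setting $x := \bar{x} \circ \pi$, the key claim is that the natural map
\[
M\{K^n\}/\ker(\bar{x})\,M\{K^n\} \;\longrightarrow\; M/\ker(x)\,M
\]
is injective, which (since the source is non-zero) contradicts the hypothesis. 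Injectivity is a one-line consequence of applying $\pi$: if $\mu \in M\{K^n\}$ is written as $\sum_i a_i \mu_i$ with $a_i \in \ker(x) = \pi^{-1}(\ker \bar{x})$ and $\mu_i \in M$, then $\mu = \pi(\mu) = \sum_i \pi(a_i)\pi(\mu_i) \in \ker(\bar{x})\,M\{K^n\}$, since $\pi(a_i) \in \ker(\bar{x})$ and $\pi(\mu_i) \in M\{K^n\}$.

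The main subtlety is choosing the correct lift of $\bar{x}$: a generic extension to $A$ need not preserve the non-vanishing of the fiber, but the specific lift $x = \bar{x} \circ \pi$ does, precisely because it comes equipped with the compatible splitting $\pi$ on $M$. This splitting is not $\GL$-equivariant (only $G(n)$-equivariant), but the Nakayama-style descent requires only its existence as a morphism in $\Vec_K$.
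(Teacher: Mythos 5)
Your proposal is correct and is essentially the paper's own argument: both reduce to the finitely generated $A\{K^n\}$-module $M\{K^n\}$ over the finitely generated $K$-algebra $A\{K^n\}$, produce via Nakayama and the Nullstellensatz a maximal ideal $\fm=\ker(\bar{x})$ in its support (using the standing hypothesis from the surrounding Proposition that $K$ is algebraically closed, which you should flag when invoking the Nullstellensatz), and then use the retraction $\pi\colon \bV \to K^n$ together with $x=\bar{x}\circ\pi$ and the multiplicativity $\pi(a\mu)=\pi(a)\pi(\mu)$ to transfer the hypothesis $M=\ker(x)M$ back to the section. The only cosmetic difference is that the paper argues directly that $M\{U\}=\fm M\{U\}$ for every $U$ and every $\fm$, whereas you argue by contradiction via injectivity of the map on fibers.
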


\begin{proof}
It suffices to show that $M\{U\}=0$ for all finite dimensional subspaces
$U$ of $\bV$. Thus fix $U$. Since $M\{U\}$ is a finitely generated
$A\{U\}$-module, and $A\{U\}$ is a finitely generated $K$-algebra,
it suffices to show $M\{U\}/\fm M\{U\}=0$ for all maximal ideals $\fm$
of $A\{U\}$. Thus let $\fm$ be given. Since $A\{U\}$ is a $K$-algebra
of finite type and $K$ is algebraically closed, $\fm$ is the kernel of
a $K$-algebra homomorphism $x:A\{U\} \to K$. Let $\pi \colon \bV \to U$
be a linear surjection; this gives a surjective $K$-algebra homomorphism
$A \to A\{U\}$ also denoted $\pi$. By assumption, $M=\ker(x \circ \pi)
M$.
Applying $\pi$, we find $M\{U\}= \fm M\{U\}$, and so $M\{U\}/\fm M\{U\}=0$. The result follows.
\end{proof}

\section{The embedding theorem} \label{s:embed}

Suppose $f \colon Z \to X$ is a map of $\GL$-varieties. Essentially by definition, $f$ factors as a closed immersion $Z \to \bA(P) \times X$, for some polynomial representation $P$, followed by the projection map. Let $0=P_0 \subset \cdots \subset P_n=P$ be a composition series for $P$. Then the projection $\bA(P) \times X \to X$ factors as
\begin{displaymath}
\bA(P_n) \times X \to \bA(P_{n-1}) \times X \to \cdots \to \bA(P_0) \times X = X,
\end{displaymath}
and each map here is a torsor for some $\bA(P_i/P_{i-1})$. This shows that the study of general maps can be reduced to the study of closed subvarieties of torsors for irreducible representations. In \S \ref{s:embed}, we place ourselves in the latter situation, and prove the important embedding theorem (Theorem~\ref{thm:embed}).

\subsection{Statement of results}

We fix the following notation for the duration of this section:
\begin{itemize}
\item $R$ is an irreducible polynomial functor of degree $d>0$.
\item $X$ is an affine $\GL$-variety.
\item $\pi \colon Y \to X$ is an $\bA(R)$-torsor.
\item $Z$ is a closed $\GL$-subvariety of $Y$.
\end{itemize}
The purpose of this section is to study $Z$. We say $Z$ is \defn{cylindrical} if it has the form $\pi^{-1}(Z')$ for some closed $\GL$-subvariety $Z' \subset X$; equivalently, this means the ideal for $Z$ is extended from the ring $K[X]$. We will concentrate on the non-cylindrical case.

Shifting will be important in this section, so we make some comments
on it now. Let $U$ be a finite dimensional vector space.  We have
$\Sh_U(R)=R \oplus \Sh^{<d}_U(R)$, where the second term is a sum of
polynomial functors of degrees $<d$. The map $\Sh_U(Y) \to \Sh_U(X)$
is an $\bA(\Sh_U(R))$-torsor, and thus factors as
\begin{displaymath}
\Sh_U(Y) \to \Sh_U(Y)/\bA(R) \to \Sh_U(X),
\end{displaymath}
where the first map is an $\bA(R)$-torsor, and the second is an $\bA(\Sh^{<d}_U(R))$-torsor. Suppose $h$ is a function in $K[Y\{U\}]$. Then $h$ is naturally a $\GL$-invariant function on $\Sh_U(Y)$, and it is also $\bA(R)$-invariant, and thus descends to a function on the middle space above.

We are now able to state our main theorem:

\begin{theorem} \label{thm:embed}
Suppose $Z$ is non-cylindrical. There exists a finite dimensional vector space $U$ and a function $h \in K[Y\{U\}]$ that does not vanish identically on $Z\{U\}$ such that, putting $Y'=\Sh_U(Y)/\bA(R)$, the natural map $\Sh_U(Z)[1/h] \to Y'[1/h]$ is a closed $F$-immersion.
\end{theorem}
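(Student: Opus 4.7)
The plan is to apply Proposition~\ref{prop:imm3} to the $\bA(R)$-torsor $\Sh_U(Y) \to Y'$ and the closed $\GL$-subscheme $\Sh_U(Z)$, after inverting a suitable $h$. That proposition reduces the desired closed immersion to a derivative-nonvanishing condition: at every $K$-point $y$ of $\Sh_U(Y)[1/h]$ and every non-zero $r \in R^*$, some $f$ in the ideal of $\Sh_U(Z)[1/h]$ must lie in $(\Sh_U J)_{\le 1}$ and satisfy $(\partial_r f)(y) \ne 0$. The ``$F$-immersion'' slack in the conclusion will absorb the Frobenius twists that are inevitable in positive characteristic.

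\emph{Step 1: produce a seed function with a nonvanishing linear term.} Non-cylindricity of $Z$ means $J \subset K[Y]$ contains some $f \notin K[X]$. By Proposition~\ref{prop:deriv}, there is a characteristic power $q$ with $\Delta_q(f) \ne 0$ and $\Delta_i(f) = 0$ for $0 < i < q$. Propositions~\ref{prop:torsor-frob} and~\ref{prop:torsor-frob-2} then exhibit $f$ as a function on the $\bA(R^{(q)})$-torsor $Y^{(q)} \to X$ with nonzero first-order term $\Delta^{(q)}_1(f) = \Delta_q(f)$; the Frobenius twist $R^{(q)}$ is still irreducible. Since the Frobenius $Y \to Y^{(q)}$ (and its shift) is an $F$-equivalence, a closed immersion conclusion in the twisted setting produces a closed $F$-immersion in the original. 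By Proposition~\ref{prop:F-equiv-bc} we may also pass to $\overline K$. Hence we reduce to the case where $K$ is algebraically closed and some $f \in J$ has $\Delta_1(f) \ne 0$ in $K[X] \otimes R$.

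\emph{Step 2: generate $R$ via $\GL$-orbits, shift, and localization.} At any $K$-point $x \in X$ with $\Delta_1(f)(x) \ne 0$ in $R$, irreducibility makes the $\GL$-orbit of this element span $R$ (with a finite-extension caveat handled by Proposition~\ref{prop:FiniteExt}). Because $\GL$-translates $g \cdot f$ remain in $J$ with $\Delta_1(g \cdot f) = g \cdot \Delta_1(f)$, and $K[X]$-multiples likewise stay in $J$, the $\GL$-equivariant $K[X]$-submodule $M \subset K[X] \otimes R$ generated by $\Delta_1(f)$ is pointwise equal to $R$ on the non-empty open locus $\{\Delta_1(f) \ne 0\}$. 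To promote this pointwise statement to module-theoretic surjectivity, we enlarge the setting by shifting: for finite-dimensional $U$, the shifted ideal $\Sh_U(J)$ acquires new members by polarization of $f$ in $U$-directions, and $Y' = \Sh_U(Y)/\bA(R)$ is built from the strictly smaller polynomial functor $\Sh^{<d}_U(R)$, giving us leverage in the well-founded order on polynomial functors. For a suitable $U$ and a single invariant $h \in K[Y\{U\}]$ not vanishing on $Z\{U\}$, we aim to make the map $\init_1 \colon \Sh_U(J)[1/h]_{\le 1} \to K[Y'[1/h]] \otimes R$ surjective.

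\emph{Step 3: conclude via Proposition~\ref{prop:imm3}; main obstacle.} Once Step 2 is established, Proposition~\ref{prop:imm3} yields a closed immersion $\Sh_U(Z)[1/h] \to Y'[1/h]$ in the Frobenius-twisted setup, which unwinds to a closed $F$-immersion in the statement. The hard part is Step 2: passing from pointwise generation of $R$ (which is immediate from irreducibility) to surjectivity of a $K[Y'[1/h]]$-linear map with a \emph{single} finite-dimensional $U$ and a \emph{single} $h$. This requires carefully coordinating the $\GL$-orbit, the polarization supplied by the shift, and the ambient $K[X]$-module structure. We expect to handle this by a descending induction on the well-founded ordering on polynomial functors, exploiting that shifting only enlarges lower-degree pieces while leaving the irreducible top piece $R$ invariant, so that after finitely many shifts the cokernel of $\init_1$ is generically zero and is killed by inverting one invariant.
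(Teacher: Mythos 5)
Your overall frame --- reduce to the derivative criterion of Proposition~\ref{prop:imm3}, use Propositions~\ref{prop:torsor-frob} and~\ref{prop:torsor-frob-2} to trade $\Delta_1(f)=0$ for a Frobenius twist, and let the ``$F$-immersion'' slack absorb that twist --- is the same as the paper's, but the heart of the argument, your Step 2, is not carried out, and the sketch you give does not fill it. Proposition~\ref{prop:imm3} requires, at every $K$-point $y$ of $\Sh_U(Y)[1/h]$ and every non-zero $r\in R\{\bV\}^*$, an element of $J_{\le 1}$ (a function in the ideal that is \emph{affine-linear along the fibers}) with non-vanishing $r$-derivative at $y$. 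Your seed $f$ and its translates $g\cdot f$ and $K[X]$-multiples lie in $J$ but need not lie in $J_{\le 1}$, so the submodule of $K[X]\otimes R$ generated by $\Delta_1(f)$ is not in the image of $\init_1$ restricted to $J_{\le 1}$ and cannot be fed into Proposition~\ref{prop:imm3}. The paper's mechanism for manufacturing such elements is concrete: after shifting by $U$, act by the lower-triangular elements $\phi_t=1+t\phi$ of $\GL(U\oplus\bV)$ with $\phi\colon U\to\bV$ injective, and extract the coefficient $f^{\phi}_d$ of $t^d$ in $\phi_t\cdot f$; since $R$ is homogeneous of degree $d$, this coefficient lies in $J_{\le 1}$ and satisfies $\partial_r f^{\phi}_d=\partial_{\phi^*(r)}f$ (Lemma~\ref{lem:embed-key-1}), and irreducibility of $R$ then enters through the density statement Lemma~\ref{lem:embed-key-4} to handle \emph{every} non-zero $r$, not just those in one orbit. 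Your proposed substitute --- a descending induction on the well-founded order on polynomial functors to make $\init_1$ surjective after inverting a single $h$ --- is explicitly only an expectation, and it has no evident induction step: $R$ is irreducible, so there is nothing smaller to recurse to inside this proof (that induction belongs to the shift theorem, which uses the embedding theorem as a black box).

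Two further gaps. First, you never explain why $h$ can be chosen so as not to vanish identically on $Z\{U\}$, which is part of the statement and is what makes the conclusion non-vacuous; in the paper this is precisely the role of taking $f$ of \emph{minimal degree} among non-zero functions vanishing on $Z$, so that $h=\partial_{r_0}f$ (or its twisted analogue), being non-zero of strictly smaller degree, cannot vanish identically on $Z$. Second, you implicitly assume $Z$ dominates $X$: if it does not, your open locus where $\Delta_1(f)$ is non-zero may miss $\pi(Z)$ entirely, and the minimal-degree device also breaks down (the minimal-degree function could be pulled back from $X$). The paper handles this by first replacing $X$ with the closure of $\pi(Z)$, checking that $Z$ remains non-cylindrical over it, applying the dominant case there, and then extending the resulting $h_0$ to a function $h$ on $Y\{U\}$. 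Without these three ingredients --- production of elements of $J_{\le 1}$, the minimal-degree choice, and the reduction to the dominant case --- the proposal does not yet constitute a proof.
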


The proposition shows that if $Z$ embeds into an $\bA(R)$-torsor over $X$ then, after shifting, some non-empty open subvariety of $Z$ embeds into an $\bA(R')$-torsor over $X$, where $R'$ is a smaller polynomial representation than $R$. This provides an important tool for inductively studying $\GL$-varieties.

The rest of this section is devoted to the proof of this theorem. Our proof is modeled on the arguments in \cite[\S 2.9]{draisma}.

\subsection{The key result} \label{ss:embed-key}

The following is the key result we will use to prove the theorem.

\begin{proposition} \label{prop:embed-key}
Let $f \in K[Y]$ vanish on $Z$, let $r_0$ be a $K$-point of $\bA(R)$, and let $h=\partial_{r_0} f$. Let $U$ be a finite dimensional subspace of $\bV$ such that $f$ belongs to $K[Y\{U\}]$ and let $Y'=\Sh_U(Y)/\bA(R)$. Then $\pi$ induces a closed immersion $\Sh_U(Z)[1/h] \to Y'[1/h]$.
\end{proposition}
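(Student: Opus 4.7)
The plan is to apply Proposition~\ref{prop:imm3} to the $\bA(R)$-torsor $\Sh_U(Y)[1/h]\to Y'[1/h]$ with closed subscheme $\Sh_U(Z)[1/h]$. I would first observe that the localization $Y'[1/h]$ is meaningful: by the discussion immediately preceding the proposition, any function in $K[Y\{U\}]$ is $\bA(R)$-invariant on $\Sh_U(Y)$ (and $\GL$-invariant), so $h$ descends to $Y'$. After base-changing to the algebraic closure $\overline{K}$---legitimate, since a closed immersion descends along faithfully flat base change---and, if needed, enlarging to a finite extension via Proposition~\ref{prop:FiniteExt} so that the $\GL$-orbit of $r_0$ spans $R\{\bV\}^*$, Proposition~\ref{prop:imm3} reduces the task to: for each $\overline{K}$-point $y$ of $\Sh_U(Y)[1/h]$ and each nonzero $r\in R\{\bV\}^*$, produce $g\in J_{\le 1}$ with $(\partial_r g)(y)\ne 0$, where $J$ is the ideal of $\Sh_U(Z)$ in $K[\Sh_U(Y)]$ and the filtration is with respect to the shifted $\bA(R)$-torsor.

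The main construction uses Lie-derivatives of $f$ along infinitesimal shears. For a linear map $\phi\colon U\to\bV$, let $\sigma_\lambda=\id+\lambda\phi$ (extended by identity outside $U$), a one-parameter family in $\GL_{\mathrm{old}}$. Since $Z$ is old-$\GL$-stable, each $\sigma_\lambda f$ lies in $J$, and so its $\lambda^1$-coefficient $\phi(f)$---the derivation on $K[Y]$ associated to $\phi$---lies in $J$ as well. Because $f$ is a polynomial in elements of $R\{U\}$ and $\phi$ takes precisely one of its tensor factors into $\bV$ (out of $U$), the computation $\phi(r_1\cdots r_k)=\sum_i r_1\cdots\phi(r_i)\cdots r_k$ shows $\phi(f)$ is linear in the $R$-summand that generates the shifted $\bA(R)$-torsor; equivalently, $\phi(f)$ has $\bA(R)$-translation degree exactly one, so $\phi(f)\in J_{\le 1}$.

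To conclude, one shows that as $\phi$ varies over linear maps $U\to\bV$, the functions $\partial_r\phi(f)$ evaluated at $y$ suffice to separate every nonzero $r$. By equivariance of the torsor structure, $\partial_r\phi(f)$ can be expressed in terms of $h$ and translates of $y$, and the non-vanishing of $h$ on the localized open combined with the irreducibility of $R$ (which guarantees that the $\GL$-orbit of $r_0$ spans $R\{\bV\}^*$) produces a non-zero value for each $r$. The main technical hurdle I anticipate is the combinatorial bookkeeping of matching the $d$-fold tensor structure of $R$ with the Lie-derivative action for $d>1$---specifically, checking that the first-order expansion genuinely lands in $K[Y']\otimes R\{\bV\}$ rather than in a mixed summand of $R\{U\oplus\bV\}$ that would be $\bA(R)$-invariant. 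This is where the argument most closely mirrors the reduction in \cite[\S 2.9]{draisma}.
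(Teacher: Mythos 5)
Your overall framing is the same as the paper's: pass to $\ol{K}$, and verify the hypothesis of Proposition~\ref{prop:imm3} by producing, for each point $y$ with $h(y)\ne 0$ and each nonzero $r\in R\{\bV\}^*$, an element of $J_{\le 1}$ with non-vanishing $r$-derivative at $y$, using shears $1+\lambda\phi$ with $\phi\colon U\to\bV$. But the central construction has a genuine gap, and it is exactly the point you flag at the end as a ``technical hurdle'': for $d>1$ the first-order coefficient $\phi(f)$ of $\sigma_\lambda\cdot f$ does \emph{not} have $\bA(R)$-translation degree one. Moving one tensor factor from $U$ into $\bV$ lands you in the mixed summands of $\Sh_U(R)$, i.e.\ in coordinates on $Y'=\Sh_U(Y)/\bA(R)$, which are $\bA(R)$-invariant. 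Concretely, for $R=\Sym^2$, $U=\langle e_1\rangle$, $f=x_{11}$ and the shear $e_1\mapsto e_1+\lambda e_2$ one gets $\sigma_\lambda\cdot f=x_{11}+2\lambda x_{12}+\lambda^2 x_{22}$: the $\lambda^1$-coefficient $2x_{12}$ lies in $J\cap B_{\le 0}$, so all its derivatives $\partial_r$ vanish and it contributes nothing to the surjectivity needed in Proposition~\ref{prop:imm3}; it is the $\lambda^2$-coefficient $x_{22}$ that sees the torsor direction. This is the key idea of the paper's proof (Lemma~\ref{lem:embed-key-1}): since $R$ is homogeneous of degree $d$, the shear rescales the $\bA(R)$-direction by $\lambda^d$, so one must take the $\lambda^d$-coefficient $f^\phi_d$, which is affine-linear with $\partial_r f^\phi_d=\partial_{\phi^*(r)}f$; choosing $r$ with $\phi^*(r)=r_0$ (possible since $\phi$ is injective) recovers $h(y)\ne 0$.

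Your concluding separation step is also not correct as stated. Irreducibility of $R$ does not imply that the $\GL$-orbit of the particular point $r_0$ spans $R\{\bV\}^*$ (and Proposition~\ref{prop:FiniteExt} concerns polynomial subrepresentations generated by vectors of a polynomial representation, not full duals; after base change to $\ol{K}$ no finite extension is needed anyway). What actually works, and what the paper does in Lemmas~\ref{lem:embed-key-3} and~\ref{lem:embed-key-4}, is a contradiction argument applied to the \emph{arbitrary} direction $r$ to be separated: the $\GL(\bV)$-submodule generated by any nonzero $r$ is dense in $R\{\bV\}^*$ (it surjects onto each $R\{W\}^*$), and the identity $\partial_r f^{g\phi}_d=\partial_{g^*r}f^\phi_d$ for $g\in\GL(\bV)$ then forces all derivatives of $f^\phi_d$ at $y$ to vanish, contradicting the non-vanishing obtained from $r_0$. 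So both the choice of expansion order and the final spanning argument need to be replaced; the first of these is the crux of the proof rather than bookkeeping.
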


There are two points worth emphasizing about the statement of the
proposition. First, $h$ could vanish on all of $Z$, in which case the
conclusion of the proposition is uninteresting. Second, the
proposition is asserting that $\pi$ actually induces a closed
immersion, and not just a closed $F$-immersion.

To show that the map in question is a closed immersion, we can pass to the algebraic closure of $K$. We thus assume in what follows that $K$ is algebraically closed. Note this base change operation does not necessarily preserve reducedness, so in what follows $X$, $Y$, and $Z$ are affine $\GL$-prevarieties. We also use the notation $x \in X$ to mean that $x$ is a $K$-point of $X$ in the following.

Let $J \subset K[\Sh_U(Y)]$ be the ideal of $\Sh_U(Z)$, and let
$J_{\le 1}$ be the affine-linear elements of $J$ with respect to the
action of $R\{\bV\}^*$ (see \S\ref{ss:subvartorsor}). Both $J$ and
$J_{\le 1}$ are stable under the ind-group $\GL(U \oplus \bV):=\bigcup_n
\GL(U \oplus K^n)$, and $f$ belongs to $J$.

Suppose $\phi \colon U \to \bV$ is an injective linear map. Define
$\phi_t=1+t\phi$, regarded as a (lower-triangular) element of 
$\GL(U \oplus \bV)$. Working in the coordinate ring of $\Sh_U(Y)$, we have an expression
\begin{displaymath}
\phi_t \cdot f=f^{\phi}_0+t f^{\phi}_1 + \cdots + t^e f^{\phi}_e,
\end{displaymath}
for some $e \ge 0$, where $f^{\phi}_0=f$. Since $J$ is stable under $\GL(U
\oplus \bV)$, it follows that each $f_i^{\phi}$ belongs to $J$.  Recall
that $d$ is the degree of $R$. We will use stars to denote contragredient
actions, so for instance $\phi^*(r)$ is short-hand for $r \circ R(\phi)$.

\begin{lemma} \label{lem:embed-key-1}
For $y \in \Sh_U(Y)$ and $r \in R\{\bV\}^*$ and $u \in K$ we have
\begin{displaymath}
f^{\phi}_d(y+ur)=f_d^{\phi}(y)+u (\partial_{\phi^*(r)} f)(y).
\end{displaymath}
In particular, $f_d^{\phi}$ belongs to $J_{\le 1}$ and $\partial_r
f_d^{\phi}=\partial_{\phi^*(r)} f$.
\end{lemma}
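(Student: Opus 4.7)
The plan is to expand $(\phi_t \cdot f)(y+ur)$ as a polynomial in both $t$ and $u$ using the torsor structure on $\Sh_U(Y)$, and then extract the coefficient of $t^d$, which by definition equals $f_d^\phi(y+ur)$.  First, I apply $\GL(U \oplus \bV)$-equivariance of the $\bA(\Sh_U(R))$-torsor structure on $\Sh_U(Y)$ to rewrite
\[ (\phi_t \cdot f)(y+ur) = f(\phi_t^{-1} y + u \cdot (\phi_t^{-1} \cdot r)), \]
where $r \in R\{\bV\}^*$ is viewed inside $R\{U \oplus \bV\}^*$ via composition with the projection $p \colon U \oplus \bV \to \bV$.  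Then, expanding via the torsor coproduct from \S\ref{ss:vecgp} gives
\[ (\phi_t \cdot f)(y+ur) = \sum_{j \ge 0} u^j \langle \Delta_j(f)(\phi_t^{-1} y),\, (\phi_t^{-1} \cdot r)^j \rangle. \]

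The first key step will be the observation that, because $f$ is pulled back from $K[Y\{U\}]$ along the natural projection $\pi \colon \Sh_U(Y) \to Y\{U\}$ induced by the first-factor inclusion $\iota \colon U \hookrightarrow U \oplus \bV$, the coproduct values $\Delta_j(f)$ actually land in the subspace $\Sym^j(R\{U\}) \hookrightarrow \Sym^j(R\{U \oplus \bV\})$ embedded via $R(\iota)$.  Consequently, the above pairing depends on $\phi_t^{-1} \cdot r$ only through its restriction $(\phi_t^{-1} \cdot r)|_U := (\phi_t^{-1} \cdot r) \circ R(\iota) \in R\{U\}^*$.

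The second key step is a direct computation of this restriction.  Using the explicit formula $\phi_t(u,v) = (u, v + t\phi(u))$, one finds $p \circ \phi_t \circ \iota = t\phi \colon U \to \bV$; since $R$ is homogeneous of degree $d$, we have $R(t\phi) = t^d R(\phi)$, and hence $(\phi_t^{-1} \cdot r)|_U = t^d \phi^*(r)$.  Substitution gives
\[ (\phi_t \cdot f)(y+ur) = \sum_{j \ge 0} u^j t^{jd} \langle \Delta_j(f)(\phi_t^{-1} y),\, \phi^*(r)^j \rangle. \]
The coefficient of $t^d$ receives contributions only from $j = 0$ and $j = 1$ (as $jd > d$ for $j \ge 2$): the $j = 0$ term gives $f_d^\phi(y)$, and the $j = 1$ term gives $u$ times the $t^0$-part of $\langle \Delta_1(f)(\phi_t^{-1} y), \phi^*(r)\rangle = (\partial_{\phi^*(r)} f)(\phi_t^{-1} y)$, namely $u \cdot (\partial_{\phi^*(r)} f)(y)$.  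This yields the asserted identity, and the ``in particular'' statements follow by specializing $u=1$ and reading off the linear term in $r$.

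The main obstacle will be rigorously justifying the first key step—that $\Delta_j(f)$ takes values in $\Sym^j(R\{U\})$.  This requires verifying that $\pi$ intertwines the $\bA(\Sh_U(R))$-torsor structure on $\Sh_U(Y)$ with the $\bA(R\{U\})$-torsor structure on $Y\{U\}$ via the group homomorphism induced by $R(\iota)$, and then showing that the coproduct on $K[Y\{U\}]$ pulls back correctly to give the $\Sh_U$-coproduct of $f$ on elements in the image of $\pi^*$; from this the constrained support of $\Delta_j(f)$ follows by uniqueness of the coproduct decomposition.
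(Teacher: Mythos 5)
Your proposal is correct and follows essentially the same route as the paper: expand $(\phi_t \cdot f)(y+ur)$ in powers of $t$, use that $R$ is homogeneous of degree $d>0$ so the translation only enters through $t^d\phi^*(r)$ (with terms $j \ge 2$ contributing only in degrees $\ge 2d > d$), and compare coefficients of $t^d$. Your two ``key steps''---that $\Delta_j(f)$ is supported on $\Sym^j(R\{U\})$ because $f \in K[Y\{U\}]$, and that the $U$-restriction of the transformed translation vector equals $t^d\phi^*(r)$ since $p \circ \phi_t \circ \iota = t\phi$---are exactly the content behind the paper's terse claim $\phi_t(r)=t^d\phi^*(r)$, so you have merely made the paper's implicit justification explicit (up to a harmless difference in the action convention).
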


\begin{proof}
We have $\phi_t(r)=t^d \phi^*(r)$, and so
\begin{align*}
(\phi_t \cdot f)(y+ur)
&= f(\phi_t^*(y)+ut^d \phi^*(r)) \\
&= f(\phi_t^*(y))+ut^d (\partial_{\phi^*(r)}f)(\phi_t^*(y)) + O(t^{2d}) \\
&= \sum_{i=0}^{d-1} t^i f_i^{\phi}(y) +
t^d(f^{\phi}_d(y)+u(\partial_{\phi^*(r)}f)(y)) + O(t^{d+1})
\end{align*}
and also
\begin{displaymath}
(\phi_t \cdot f)(y+ur) = \sum_{i=0}^e t^i f^{\phi}_i(y+ur).
\end{displaymath}
Comparing the coefficients of $t^d$, we obtain the stated identity. (Note: we used the assumption $d>0$ above when we converted $O(t^{2d})$ into $O(t^{d+1})$; this requires $2d \ge d+1$.)
\end{proof}

\begin{lemma} \label{lem:embed-key-2}
For any $y \in \Sh_U(Y)[1/h]$ there exists $r \in R\{\bV\}^*$ such that $(\partial_r f^{\phi}_d)(y) \ne 0$.
\end{lemma}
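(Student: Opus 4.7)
The plan is to combine Lemma~\ref{lem:embed-key-1} with the surjectivity of $\phi^*$. By Lemma~\ref{lem:embed-key-1} we have $(\partial_r f^\phi_d)(y) = (\partial_{\phi^*(r)} f)(y)$ for every $r \in R\{\bV\}^*$, so it suffices to find $r$ with $(\partial_{\phi^*(r)} f)(y) \ne 0$.

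First I would check that $\phi^*\colon R\{\bV\}^* \to R\{U\}^*$ is surjective. Indeed, $\phi\colon U \to \bV$ is injective and every injection of $K$-vector spaces splits, so applying the polynomial functor $R$ to a splitting yields a retraction of $R(\phi)\colon R\{U\} \to R\{\bV\}$; consequently $R(\phi)$ is injective and its dual $\phi^*$ is surjective. In particular, as $r$ ranges over $R\{\bV\}^*$ the image $\phi^*(r)$ attains every value in $R\{U\}^*$.

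Next I would reinterpret $h$ as a derivative in a direction from $R\{U\}^*$. Let $\iota\colon U \hookrightarrow \bV$ denote the inclusion under which $f \in K[Y\{U\}] \subset K[Y]$. The projection $Y \to Y\{U\}$ induced by $\iota$ is equivariant for the group homomorphism $\bA(R\{\bV\}) \to \bA(R\{U\})$ whose derivative is the restriction $\iota^*\colon R\{\bV\}^* \to R\{U\}^*$. Hence $h = \partial_{r_0} f$ agrees with the pullback of $\partial_{s_0} f \in K[Y\{U\}]$, where $s_0 := \iota^*(r_0)$. Viewed as a function on $\Sh_U(Y)$ via the standard embedding $K[Y\{U\}] \hookrightarrow K[\Sh_U(Y)]$ used earlier in \S\ref{ss:polyrep}, the condition $h(y) \ne 0$ becomes $(\partial_{s_0} f)(y) \ne 0$.

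Finally, using surjectivity, I would pick $r \in R\{\bV\}^*$ with $\phi^*(r) = s_0$. Then
\[
(\partial_r f^\phi_d)(y) = (\partial_{\phi^*(r)} f)(y) = (\partial_{s_0} f)(y) = h(y) \ne 0,
\]
as required. The main obstacle is bookkeeping: one must verify that the interpretation of $(\partial_{\phi^*(r)} f)(y)$ in the formula of Lemma~\ref{lem:embed-key-1} really does coincide with the value at $y$ of the pullback of $\partial_{\phi^*(r)} f$ from $Y\{U\}$. Once these conventions are harmonized, the proof reduces to the elementary linear-algebraic fact that the dual of an injective linear map is surjective.
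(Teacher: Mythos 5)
Your proof is correct and takes essentially the same route as the paper: since $\phi$ is injective, $\phi^*\colon R\{\bV\}^*\to R\{U\}^*$ is surjective, so one picks $r$ mapping to the direction defining $h$ and concludes via Lemma~\ref{lem:embed-key-1}. The extra bookkeeping you perform---replacing $r_0$ by its restriction $s_0=\iota^*(r_0)\in R\{U\}^*$ and checking that $h$ is the image of $\partial_{s_0}f$ under $K[Y\{U\}]\hookrightarrow K[\Sh_U(Y)]$---is precisely the identification the paper leaves implicit when it writes $\phi^*(r)=r_0$.
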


\begin{proof}
Since $\phi$ is injective, $\phi^* \colon R\{\bV\}^* \to R\{U\}^*$ is
surjective. We can thus find $r \in R\{\bV\}^*$ such that $\phi^*(r)=r_0$. We have
\begin{displaymath}
(\partial_r f^{\phi}_d)(y)=(\partial_{\phi^*(r)}f)(y)=h(y) \ne 0
\end{displaymath}
and so the result follows.
\end{proof}

\begin{lemma} \label{lem:embed-key-3}
For any $y \in \Sh_U(Y)[1/h]$ and any non-zero $r \in R\{\bV\}^*$ there exists $F \in J_{\le 1}$ such that $(\partial_r F)(y) \ne 0$.
\end{lemma}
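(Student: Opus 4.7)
The plan is to construct $F$ as a $K$-linear combination of elements $f^{\phi_i}_d$ produced by Lemma~\ref{lem:embed-key-1} for well-chosen injective $\phi_i\colon U \to \bV$. Concretely, Lemma~\ref{lem:embed-key-1} gives $\partial_r f^{\phi}_d = \partial_{\phi^*(r)} f$ for each injective $\phi$, where $\phi^* \colon R\{\bV\}^* \to R\{U\}^*$ denotes the transpose of $R(\phi)$. Since $f \in K[Y\{U\}]$, the derivative $\partial_{r_0} f$ depends only on the restriction of $r_0$ to $R\{U\}$, so we may regard $r_0$ as an element of $R\{U\}^*$. If one can write $r_0 = \sum_i c_i \phi_i^*(r)$ with each $\phi_i$ injective and $c_i \in K$, then $F := \sum_i c_i f^{\phi_i}_d \in J_{\le 1}$ satisfies $\partial_r F = \partial_{r_0} f = h$, and hence $(\partial_r F)(y) = h(y) \neq 0$ by the standing hypothesis on $y$.

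The heart of the argument is therefore the claim that the $K$-linear span $S_r$ of $\{\phi^*(r) : \phi \colon U \to \bV \text{ injective}\}$ equals $R\{U\}^*$. I would argue this by duality and irreducibility. Suppose instead $S_r \subsetneq R\{U\}^*$, and pick a nonzero $v \in R\{U\}$ annihilating $S_r$, so that $r(R(\phi)(v)) = 0$ for every injective $\phi$. Set
\[
W := \mathrm{span}\{R(\phi)(v) : \phi \colon U \to \bV \text{ injective}\} \subset R\{\bV\}.
\]
For $g \in \GL(\bV)$ and $\phi$ injective, $g \circ \phi$ is again injective, and $R(g \circ \phi)(v) = g \cdot R(\phi)(v)$, so $W$ is $\GL(\bV)(K)$-stable. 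Moreover $W$ is nonzero: any inclusion $\iota \colon U \hookrightarrow \bV$ splits in $\Vec_K$ since $U$ is finite dimensional, so $R(\iota)$ has a left inverse and is in particular injective, giving $R(\iota)(v) \neq 0$.

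To conclude, I invoke the irreducibility of $R$. Because $K$ is algebraically closed (as we reduced to this case in the proof of Proposition~\ref{prop:embed-key}) and in particular infinite, $\GL(K)$ is Zariski dense in the ind-group scheme $\GL$, and therefore any $\GL(K)$-stable subspace of a polynomial representation is automatically a polynomial subrepresentation. Since $R$ is an irreducible polynomial functor, $R\{\bV\}$ is an irreducible polynomial representation, forcing $W = R\{\bV\}$. This contradicts both $r|_W = 0$ and $r \neq 0$, establishing the spanning claim and hence the lemma. The main obstacle I anticipate is justifying cleanly the step from $\GL(K)$-stability to polynomial subrepresentation, which is standard but relies on $K$ being infinite and on the equivalence of categories between polynomial functors and polynomial representations recalled in Section~\ref{ss:polyrep}.
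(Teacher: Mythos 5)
Your proposal is correct, and it is best described as a constructive repackaging of the paper's argument rather than a genuinely new route. The paper argues by contradiction: assuming $(\partial_r F)(y)=0$ for all $F \in J_{\le 1}$, it applies this to the family $f^{g\phi}_d$, $g \in \GL(\bV)$, and uses the density statement of Lemma~\ref{lem:embed-key-4} to contradict Lemma~\ref{lem:embed-key-2}. Since every injective $\phi' \colon U \to \bV$ has the form $g\phi$ for a fixed injective $\phi$ and some $g \in \GL(K)$, your set $\{\phi'^*(r)\}$ is exactly the image under $\phi^*$ of the $\GL(\bV)$-orbit of $r$, so your spanning claim $S_r = R\{U\}^*$ is equivalent to the surjectivity in Lemma~\ref{lem:embed-key-4} applied to the submodule generated by $r$, and your irreducibility argument for it (annihilator $v$, the $\GL(K)$-stable subspace $W$) is the dual of the paper's proof of that lemma. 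The organizational difference has some merit: you directly exhibit $F=\sum_i c_i f^{\phi_i}_d$ with $\partial_r F=\partial_{r_0} f=h$, which subsumes Lemma~\ref{lem:embed-key-2} (the case of a single $\phi$ with $\phi^*(r)=r_0$) and avoids the contradiction; the paper instead keeps the density input as a stand-alone lemma in the dual module and avoids your double-duality step, for which you should note explicitly that $R\{U\}$ is finite dimensional (as $R$ is irreducible, hence of finite length), so a proper subspace of $R\{U\}^*$ really is annihilated by some nonzero $v \in R\{U\}$. The step you flag---that a $\GL(K)$-stable subspace of a polynomial representation is a subrepresentation over the now algebraically closed, hence infinite, field---is standard and consistent with \S\ref{ss:polyrep}; alternatively you can bypass it by intersecting $W$ with $R\{K^n\}$ and invoking, exactly as the paper does in Lemma~\ref{lem:embed-key-4}, that $R\{K^n\}$ is an irreducible representation of $\GL_n$ or zero.
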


\begin{proof}
Fix $y$ and $r$ and suppose the result is false; that is, for all $F \in J_{\le 1}$ we have $(\partial_r F)(y)=0$. Let $g \in \GL(\bV)$. Then $f^{g \phi}_d$ belongs to $J_{\le 1}$, and so we have
\begin{displaymath}
0 = (\partial_r f^{g\phi}_d)(y) = (\partial_{\phi^*(g^* r)} f)(y) = (\partial_{g^* r} f^{\phi}_d)(y).
\end{displaymath}
Let $N$ be the $\GL(\bV)$-submodule of $R\{\bV\}^*$ generated by $r$.
The above computation shows that $(\partial_{r'} f^{\phi}_d)(y)=0$ for
all $r' \in N$. Now, $R\{\bV\}^*$ need not be an irreducible
$\GL(\bV)$-module, but any non-zero submodule is dense in the sense
that it surjects on $R\{W\}^*$ for any finite dimensional subspace $W$ of $\bV$ (see Lemma~\ref{lem:embed-key-4} below). Thus $N$ is dense in this sense.

It now follows that $(\partial_{r'} f^{\phi}_d)(y)=0$ for all $r' \in
R\{\bV\}^*$, which contradicts the previous lemma. To see this, let
$r'$ be given. Let $W$ be a finite dimensional subspace of $\bV$ such
that $f^{\phi}_d$ belongs to the coordinate ring of $(\Sh_U{Y})\{W\}$.
There is some $r'' \in N$ that has the same restriction as $r'$ to
$R\{W\}^*$. It follows that $(\partial_{r'} f)(y)=(\partial_{r''}f)(y)=0$, as stated.
\end{proof}

\begin{lemma} \label{lem:embed-key-4}
Let $N$ be a non-zero $\GL(\bV)$-submodule of $R\{\bV\}^*$. Then for
any finite dimensional subspace $W$ of $\bV$, the natural map $N \to
R\{W\}^*$ is surjective.
\end{lemma}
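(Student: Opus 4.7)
The plan is to reformulate the surjectivity of $N \to R\{W\}^*$ as the vanishing of a certain annihilator in $R\{\bV\}$, and then to invoke the irreducibility of $R\{\bV\}$ as a polynomial $\GL$-representation. Since $R\{W\}$ is finite dimensional, the restriction map is surjective if and only if no nonzero $\beta \in R\{W\}$ satisfies $\alpha(\beta)=0$ for every $\alpha \in N$. Accordingly, I introduce
\[
N^\perp := \{v \in R\{\bV\} : \alpha(v)=0 \text{ for all } \alpha \in N\} \subseteq R\{\bV\},
\]
and observe that the equality $R\{\bV\}=\bigcup_W R\{W\}$ (over finite dimensional $W$) reduces the lemma to the single claim $N^\perp=0$.

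The next step is to verify that $N^\perp$ is a polynomial $\GL$-subrepresentation of $R\{\bV\}$. Stability under $\GL(\bV)$ is immediate: for $g \in \GL(\bV)$, $v \in N^\perp$ and $\alpha \in N$, one has $\alpha(gv)=(g^{-1}\alpha)(v)=0$ because $g^{-1}\alpha \in N$. When $K$ is infinite this already identifies $N^\perp$ as a polynomial subrepresentation, since $\GL(K)$-stability coincides with being a polynomial subrepresentation. When $K$ is finite, for any $v \in N^\perp$ I would apply Proposition~\ref{prop:FiniteExt} to obtain a finite extension $L/K$ for which the polynomial subrepresentation of $R\{\bV\}$ generated by $v$, after base change to $L$, equals the $L$-linear span of $\GL(L) \cdot (1 \otimes v)$; since $L \otimes N$ is $\GL(L)$-stable in $L \otimes R\{\bV\}^*$, this orbit lies in $L \otimes N^\perp$, and hence the polynomial subrep generated by $v$ sits inside $N^\perp$.

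Finally, because $R$ is an irreducible polynomial functor, the equivalence of categories from \S\ref{ss:polyrep} implies that $R\{\bV\}$ is irreducible in the category of polynomial $\GL$-representations. Consequently $N^\perp \in \{0, R\{\bV\}\}$, and the case $N^\perp=R\{\bV\}$ would force $N=0$, contradicting the hypothesis. Hence $N^\perp=0$ and the lemma follows. The main technical obstacle is the finite-field case in the middle step: abstract $\GL(K)$-stability is strictly weaker than being a polynomial subrepresentation, and Proposition~\ref{prop:FiniteExt} is precisely the tool that bridges this gap.
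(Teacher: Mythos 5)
Your proof is correct in the generality in which the lemma is actually invoked (recall that throughout \S\ref{ss:embed-key} the field $K$ has been reduced to its algebraic closure, so only your infinite-field case is needed), but it takes a genuinely different route from the paper's. You dualize globally: the annihilator $N^\perp \subseteq R\{\bV\}$ is stable under the abstract group $\GL(K)$, hence over an infinite field it is a polynomial subrepresentation, and irreducibility of $R\{\bV\}$ in the category of polynomial representations (via the equivalence recalled in \S\ref{ss:polyrep}) forces $N^\perp = 0$; finite-dimensional duality in each $R\{W\}$ then gives the asserted surjectivity, and $N^\perp = R\{\bV\}$ is excluded since it would mean $N=0$. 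The paper instead argues at a finite level: choose $\lambda \in N$ nonzero, choose a finite dimensional $W' \supseteq W$ with $\lambda|_{R\{W'\}} \neq 0$, observe that the image of $N$ in $R\{W'\}^*$ is a nonzero $\GL(W')$-stable subspace of the irreducible (or zero) representation $R\{W'\}^*$, hence all of it, and finish with surjectivity of the restriction $R\{W'\}^* \to R\{W\}^*$. Both arguments rest on the same classical density input that over an infinite field stability under the abstract group implies stability in the polynomial/algebraic sense; you use it at infinite level (identifying $\GL(K)$-stable subspaces of $R\{\bV\}$ with subrepresentations), while the paper only needs it for the finite-dimensional module $R\{W'\}^*$ together with the fact that an irreducible polynomial functor evaluates to an irreducible or zero $\GL(W')$-representation. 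Your version is slightly more "categorical," the paper's slightly more elementary; both are short and correct.

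One caveat: your finite-field paragraph is both unnecessary here (given the standing assumption that $K$ is algebraically closed in this part of the argument) and, as written, has a gap. From $N$ being stable under $\GL(K)$ you cannot conclude that $L \otimes N$ is stable under the much larger group $\GL(L)$; Proposition~\ref{prop:FiniteExt} concerns the span of $\GL(L)$-orbits inside a genuine polynomial subrepresentation and does not supply this stability, so the step "this orbit lies in $L \otimes N^\perp$" is unjustified. Since the lemma is only applied over an algebraically closed field, this does not affect the proof that is actually needed, but if you wanted a statement over finite fields you would need to strengthen the hypothesis on $N$ (e.g.\ assume it is stable under the ind-group scheme, not just the abstract group of $K$-points).
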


\begin{proof}
Let $\lambda$ be a non-zero element of $N$, and let $W'$ be a finite
dimensional subspace of $\bV$ containing $W$ such that $\lambda$ is
non-zero on $R\{W'\}$. The image of $N \to R\{W'\}^*$ is then a
non-zero $\GL(W')$-submodule, and thus all of $R\{W'\}^*$ since
$R\{W'\}$ is a finite dimensional irreducible representation of
$\GL(W')$ (or zero). Since $R\{W'\}^* \to R\{W\}^*$ is surjective, the result follows.
\end{proof}

Proposition~\ref{prop:embed-key} now follows from Lemma~\ref{lem:embed-key-3} and Proposition~\ref{prop:imm3}.

\subsection{Proof of the main theorem}

We now prove Theorem~\ref{thm:embed}. The argument will be divided into several cases.

\textit{Case 1: $Z$ maps dominantly to $X$.} Let $f$ be a non-zero
function of minimal degree on $Y$ that vanishes on $Z$; note that some
such function exists since we have assumed $Z$ is non-cylindrical. Since
$Z$ maps dominantly to $X$, it follows that $f$ does not factor
through $\pi$; in other words, $f$ is not fixed by the action of
$\bA(R)$, and so $\Delta_i(f) \ne 0$ for some $i \ge 1$ (see
\S\ref{ss:vecgp}).

\textit{Case 1a: $\Delta_1(f) \ne 0$.} In this case, there is some $r_0
\in R^*$ (i.e., a $K$-point of $\bA(R)$), such that
$h=\partial_{r_0}(f)$ is non-zero; see \S\ref{ss:der}. Since $h$ has
smaller degree than $f$, it follows that $h$ does not vanish on $Z$. The
theorem follows in this case from Proposition~\ref{prop:embed-key}.

\textit{Case 1b: $\Delta_1(f) = 0$.} By Lemma~\ref{prop:deriv}, this can only happen when $K$ has positive characteristic $p$. Let $q$ be the power of $p$ in that lemma, so that $\Delta_q(f) \ne 0$. Let $F \colon Y \to Y^{(q)}$ be the Frobenius map relative to $X$. By Proposition~\ref{prop:torsor-frob}, $Y^{(q)}$ is an $\bA(R^{(q)})$-torsor over $X$. By Proposition~\ref{prop:torsor-frob-2}, $f$ descends to $Y^{(q)}$; in other words, there is a function $f'$ on $Y^{(q)}$ such that $f=F^*(f')$. Moreover, that proposition shows that $\Delta_1(f')$ is non-zero since its pullback under $F$ is $\Delta_q(f)$, which is non-zero. Let $r_0 \in (R^{(q)})^*$ be such that $h'=\partial_{r_0}(f')$ is non-zero, and put $h=F^*(h')$. Since $h$ has smaller degree than $f$, it does not vanish on $Z$. Since $F$ is a homeomorphism, $F(Z)$ is a closed subset of $Y^{(q)}$; give it the reduced subscheme structure, so that it is an irreducible $\GL$-variety. Clearly, $f'$ vanishes on $F(Z)$ and $h'$ does not.

Let $U$ be a finite dimensional subspace of $\bV$ such that $f$ belongs to the coordinate ring of $Y\{U\}$. Consider the diagram
\begin{displaymath}
\xymatrix{
\Sh_U(Z)[1/h] \ar[d] \ar[r] & (\Sh_U(Y)/\bA(R))[1/h] \ar[d] \\
\Sh_U(F(Z))[1/h'] \ar[r] & (\Sh_U(Y^{(q)})/\bA(R^{(q)}))[1/h'] }
\end{displaymath}
where the vertical maps are Frobenius. One readily verifies that the diagram commutes. Proposition~\ref{prop:embed-key} shows that the bottom map is a closed immersion. Since the vertical maps are $F$-equivalences, it follows that the top map is a closed $F$-immersion. This verifies the theorem in this case.

\textit{Case 2: the general case.} We now handle the case when $Z$ may not map dominantly to $X$. Let $X_0$ be the closure of $\pi(Z)$, given the reduced scheme structure; thus $X_0$ is an irreducible affine $\GL$-variety. Let $Y_0$ be the base change of $Y$ to $X_0$, which is an $\bA(R)$-torsor over $X_0$. Note that $Z$ is a non-cylindrical closed $\GL$-subvariety of $Y_0$. By Case~1, there are $U$ and $h_0$ such that the map
\begin{displaymath}
(\Sh_U{Z})[1/h_0] \to Y_0'[1/h_0]
\end{displaymath}
is a closed immersion (and the domain is non-empty); here $Y_0'=\Sh_U(Y_0)/\bA(R)$. Let $h$ be any extension of $h_0$ from $Y_0\{U\}$ to $Y\{U\}$; note that we can replace $h_0$ above with $h$. Since the natural map $Y_0'[1/h] \to Y'[1/h]$ is a closed immersion, the result follows.

\section{Elementary maps and varieties} \label{s:elem}

One of the main results of this paper is the decomposition theorem (Theorem~\ref{thm:decomp}), which decomposes an arbitrary map of $\GL$-varieties into ``elementary'' pieces. In \S \ref{s:elem}, we introduce various classes of elementary maps, and establish the basic properties about them that will be needed to prove the decomposition theorem.

\subsection{Definitions}

Let $f \colon Y \to X$ be a map of affine $\GL$-prevarieties.
\begin{enumerate}[label=(\Roman*)]
\item We say $f$ is \defn{purely elementary} if it factors as $f=f_n \circ \cdots \circ f_1$ where each $f_i$ has the structure of an $\bA(R_i)$-torsor, for some irreducible polynomial representation $R_i$ of positive degree. In this case, $f_0 \colon Y_0 \to X_0$ is an isomorphism.
\item We say $f$ is \defn{0-elementary} if $f_0 \colon Y_0 \to X_0$ is a surjective and smooth map of schemes, and $Y=Y_0 \times_{X_0} X$. In this case, $f$ itself is a smooth map of schemes; in particular, it is of finite type (in the usual sense).
\item We say $f$ is \defn{elementary} if it factors as $f_2 \circ f_1$, where $f_1$ is purely elementary and $f_2$ is 0-elementary. More explicitly, $f$ is elementary if and only if $f_0$ is smooth and surjective, and the map $Y \to Y_0 \times_{X_0} X$ is purely elementary.
\item We say $f$ is \defn{$F$-elementary} if it admits a factorization $f_2 \circ f_1$ where $f_1$ is an $F$-equivalence and $f_2$ is elementary.
\end{enumerate}
We say that a $\GL$-variety $X$ is \defn{elementary} or \defn{$F$-elementary} if the map $X \to \Spec(K)$ is so. Explicitly, $X$ is elementary if and only if $X_0$ is a smooth variety over $K$ and the map $X \to X_0$ factors into a sequence of torsors, and $X$ is $F$-elementary if and only if there exists an $F$-equivalence $X \to X'$ with $X'$ elementary.

\begin{remark}
If $\chr(K)=0$, and $f:Y \to X$ is purely elementary, then it follows from
the fact that polynomial functors are semisimple that $Y \cong X \times
\bA(P)$ for some polynomial functor $P$, where $f$ corresponds to the
projection. If $f$ is just elementary and $X=\Spec(K)$, then it follows that
$Y \cong Y_0 \times \bA(P)$, where $Y_0$ is smooth.  In \cite{polygeom},
where we developed the theory of $\GL$-varieties in characteristic zero,
an elementary $\GL$-variety is one of the form $B \times \bA(P)$,
where $P$ is a polynomial representation with $P_0=0$ and $B$ is an
irreducible affine variety over $K$. In the current paper, instead of requiring that $B$
is irreducible, we have the alternative condition that $B$ is smooth. This
condition is more convenient since it is preserved under base change.
\end{remark}

\subsection{An example}

We now give an example that illustrates some of the subtleties with elementary $\GL$-varieties when $K$ has positive characteristic $p$. Put
\begin{displaymath}
R=\Sym(\bV^{(p)} \oplus \Sym^{p^2}(\bV)).
\end{displaymath}
If $w$ belongs to $\bV^{(p)}$ or $\Sym^{p^2}(\bV)$, then we write $[w]$ for the corresponding element of $R$. Define $I$ to be the ideal of $R$ generated by the elements
\begin{displaymath}
[v^p]^p-[v^{p^2}]
\end{displaymath}
for $v \in \bV$. Let $S=R/I$ and let $Y=\Spec(S) \subset
\bA(\bV^{(p)}) \times \bA(\Sym^{p^2}(\bV))$. We show that: (i) $Y$ is $F$-elementary; (ii) ignoring the $\GL$-action, $Y$ is an affine space; and (iii) as a $\GL$-variety, $Y$ is not of the form $\bA(P)$ for any polynomial representation $P$.

The projection from $Y$
to the second factor $X=\bA(\Sym^{p^2}(\bV))$ is surjective. This can
be checked on $\ol{K}$-points: a $\ol{K}$-point $x \in X\{V\}$, where
$V$ is a finite dimensional $K$-vector space, is a symmetric tensor in
$(V^*_{\ol{K}})^{\otimes n}$. This gives rise to a $\ol{K}$-linear function
$\xi$ on $V^{(p)}_{\ol{K}}$ that sends $v^p$ to the pairing $\langle x,v^{p^2}
\rangle^{\frac{1}{p}}$. Then $(\xi,x)$ is a $K$-point of $Y\{V\}$.

Thus $K[X] \to K[Y]$ is injective. We claim that it is also
$F$-surjective. Indeed, since $[v^p]^p + I$ interesects $K[X]$ for every
$v \in \bV$, the $p$-th power of any element in $K[Y]$ is in the image
of $K[X] \to K[Y]$. We conclude that $Y \to X$ is an $F$-equivalence.
Since $X \to \Spec(K)$ is (purely) elementary, $Y$ is $F$-elementary.

Further, disregarding the $\GL$-action, $S$ is a graded polynomial ring
in the variables $e_1^p,e_2^p,\ldots \in \bV^{(p)}$ (of degree $p$)
and any complementary basis $x_1,x_2,\ldots$ in $\Sym^{p^2}(\bV)$
of the subspace $V^{(p^2)}$ spanned by $e_1^{p^2},e_2^{p^2},\ldots$
(of degree $p^2)$.

However, $S$ is not isomorphic, as a $\GL$-algebra, to any algebra
of the form $\Sym(P)$ for $P$ a polynomial representation.  Indeed,
if it were, then the degree-$d$ part $P_d$ of $P$ would be zero for
$d=0,\ldots,p-1,p+1,\ldots,p^2-1$, $P_p$ would have to be $\bV^{(p)}$,
and $\Sym^{p}(\bV^{(p)}) \oplus P_{p^2} \cong S_{p^2}$. But $S_{p^2}$
is the quotient of $\Sym^p(\bV^{(p)}) \oplus \Sym^{p^2}(\bV)$ by the
space spanned by the generators of $I$, and in this quotient, the image
of the first summand is not a direct summand, a contradiction.

\subsection{Permanence properties}

We now show that the classes of morphisms (I)--(IV) are closed under various natural operations.

\begin{proposition} \label{prop:elbc}
Consider a cartesian square of affine $\GL$-prevarieties
\begin{displaymath}
\xymatrix{
Y' \ar[r] \ar[d]_{f'} & Y \ar[d]^f \\
X' \ar[r] & X }
\end{displaymath}
If $f$ belongs to one of the classes (I)--(IV) then so does $f'$.
\end{proposition}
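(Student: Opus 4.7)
The plan is to verify the base-change stability separately for each of the four classes (I)--(IV), in order, using each case to bootstrap the next. All four arguments are soft, reducing to the fact that torsors, 0-morphisms, and $F$-equivalences are each known to be stable under base change.

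For class (I), I would factor $f=f_n\circ\cdots\circ f_1$ where $f_i$ is an $\bA(R_i)$-torsor for an irreducible polynomial representation $R_i$ of positive degree. Pulling back the entire factorization along $X'\to X$, each $f_i$ base changes to an $\bA(R_i)$-torsor (as recorded in \S\ref{s:torsor}), and the composition of these pullbacks is $f'$. Hence $f'$ is purely elementary.

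The main conceptual step, and the one requiring the most care, is class (II). Here one must verify that the functor $(-)_0$ on affine $\GL$-schemes (the ``value at the zero vector space'' construction) commutes with fiber products, i.e.\ $Y'_0\cong Y_0\times_{X_0}X'_0$ when $Y'=Y\times_X X'$. This reduces to the fact that evaluation at $0$ on $\GL$-algebras is computed by a right exact linear functor on polynomial representations that commutes with tensor products, so at the ring level $(B\otimes_A C)\{0\}=B\{0\}\otimes_{A\{0\}}C\{0\}$. Granted this, and starting from $Y=Y_0\times_{X_0}X$, one computes
\[
Y'=Y\times_X X'=(Y_0\times_{X_0}X)\times_X X'=Y_0\times_{X_0}X'=(Y_0\times_{X_0}X'_0)\times_{X'_0}X'=Y'_0\times_{X'_0}X',
\]
and the map $f'_0\colon Y'_0\to X'_0$ is the pullback of $f_0$ along $X'_0\to X_0$. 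Since smoothness and surjectivity of morphisms of finite-type $K$-schemes are both preserved by arbitrary base change, $f'_0$ remains smooth and surjective, so $f'$ is 0-elementary.

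Cases (III) and (IV) follow formally. For (III), factor $f=f_2\circ f_1$ with $f_1$ purely elementary and $f_2$ 0-elementary; the base changes $f'_1$ and $f'_2$ are in the respective classes by (I) and (II), and their composition is $f'$. For (IV), factor $f=f_2\circ f_1$ with $f_1$ an $F$-equivalence and $f_2$ elementary; then $f'_2$ is elementary by (III), while $f'_1$ is an $F$-equivalence by Proposition~\ref{prop:F-equiv-bc}, so $f'=f'_2\circ f'_1$ is $F$-elementary. The only place where something potentially subtle enters is (II), via the commutation of $(-)_0$ with fiber products; everything else is bookkeeping.
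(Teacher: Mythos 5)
Your proposal is correct and follows essentially the same route as the paper: case (I) by base-changing the torsor factorization, case (II) via the identical chain of fiber-product identities together with preservation of smoothness and surjectivity, and cases (III)--(IV) formally from these plus Proposition~\ref{prop:F-equiv-bc}. The only difference is that you justify the identification $Y'_0 \cong Y_0 \times_{X_0} X'_0$ (exactness and monoidality of evaluation at $0$), which the paper simply asserts; that is a harmless, correct elaboration.
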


\begin{proof}
Case~(I) is clear, since the base change of an $\bA(R)$-torsor is still an $\bA(R)$-torsor. Now consider case~(II). Then $f'_0$ is the base change of $f_0$ along $X'_0 \to X_0$, and is thus surjective and smooth. We have $Y'_0=Y_0 \times_{X_0} X'_0$, and so
\begin{displaymath}
Y' = Y \times_X X' = (Y_0 \times_{X_0} X) \times_X X' = Y_0 \times_{X_0} X' = (Y_0 \times_{X_0} X_0') \times_{X_0'} X' = Y_0' \times_{X_0'} X'.
\end{displaymath}
Thus $f'$ is 0-elementary. Case~(III) follows immediately from the first two cases, and case~(IV) follows from case~(III) and the fact that $F$-equivalences are stable under base change (Proposition~\ref{prop:F-equiv-bc}).
\end{proof}

\begin{proposition}
Let $f \colon Y \to X$ be a morphism of affine $\GL$-prevarieties, and consider the induced morphism
\begin{displaymath}
f_{\red} \colon Y_{\red} \to X_{\red}
\end{displaymath}
on the reduced subschemes. If $f$ belongs to one of the classes (I)--(IV) then so does $f_{\red}$.
\end{proposition}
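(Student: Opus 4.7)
The plan is to handle classes (I)--(IV) in turn, with (III) and (IV) reducing to (I) and (II). A preliminary observation I will use repeatedly is that reduction commutes with the $\{0\}$-functor on $\GL$-prevarieties: $(Y_{\red})_0 = (Y_0)_{\red}$. This holds because the nilradical of a $\GL$-algebra is a homogeneous ideal---nilpotents form an ideal, and the top-degree component of any nilpotent element is itself nilpotent, so the nilradical decomposes into graded pieces; restricting to degree zero recovers the nilradical of $B_0 = B\{0\}$.

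For class (I), it suffices to show that if $f \colon Y \to X$ is an $\bA(R)$-torsor then so is $f_{\red} \colon Y_{\red} \to X_{\red}$, and then iterate through the given factorization. Local triviality provides a Zariski cover of $X$ by opens $U$ with $Y|_U \cong U \times \bA(R)$. Because $\bA(R) = \Spec(\Sym(R))$ is a polynomial ring over $K$, hence geometrically reduced, I would argue that reduction commutes with this product: $(U \times \bA(R))_{\red} = U_{\red} \times \bA(R)$. The action map and trivialization data then descend to give a torsor structure on $Y_{\red} \to X_{\red}$.

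For class (II), the key input is the scheme-theoretic identity $Y_{\red} = Y \times_X X_{\red}$ whenever $f \colon Y \to X$ is smooth. This holds because smooth base change of a reduced scheme is reduced, so $Y \times_X X_{\red}$ is a reduced closed subscheme of $Y$ on the full underlying space, which characterizes $Y_{\red}$. Applying this to $f$ (which is smooth, being the base change of $f_0$ along $X \to X_0$) and then to $f_0$ itself, I compute
\[ Y_{\red} = Y \times_X X_{\red} = Y_0 \times_{X_0} X_{\red} = (Y_0)_{\red} \times_{(X_0)_{\red}} X_{\red}, \]
where the final step uses that $X_{\red} \to X_0$ factors through $(X_0)_{\red}$ together with the smoothness identity applied to $f_0$. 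Combined with the preliminary observation, this both identifies $(f_{\red})_0$ with the base change $(Y_0)_{\red} \to (X_0)_{\red}$ of the smooth surjective $f_0$ (hence itself smooth and surjective) and exhibits $Y_{\red}$ as $(Y_{\red})_0 \times_{(X_{\red})_0} X_{\red}$. Thus $f_{\red}$ is $0$-elementary.

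Case (III) is immediate by composition. For (IV), I factor $f = f_2 \circ f_1$ with $f_1$ an $F$-equivalence and $f_2$ elementary, and check that $(f_1)_{\red}$ remains an $F$-equivalence: on reduced rings it is actually injective (if $[x] \mapsto 0$, then $f(x)$ is nilpotent, so $F$-injectivity of $f$ forces $x$ nilpotent) and it remains $F$-surjective (some $y^q$ lies in the image of $f$, so $[y]^q$ lies in the image of the reduced map). Combined with (III) applied to $(f_2)_{\red}$, this gives that $f_{\red}$ is $F$-elementary. I expect Case (II) to be the main technical point, as it relies on the dual compatibilities of reduction with smooth base change and with the $\{0\}$-functor.
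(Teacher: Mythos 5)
Your proof is correct, but it is organized differently from the paper's, which runs a single uniform argument: base change $f$ along the surjective closed immersion $X_{\red} \to X$ to get $f' \colon Y' \to X_{\red}$, observe that $Y'$ is reduced (in case (I) because a tower of $\bA(R)$-torsors over a reduced base is reduced, in case (II) because $f'$ is smooth over a reduced base), conclude that $Y' = Y_{\red}$ and $f' = f_{\red}$, and then simply invoke stability of classes (I) and (II) under base change (Proposition~\ref{prop:elbc}); cases (III) and (IV) follow formally, with the fact that $f_{\red}$ is an $F$-equivalence when $f$ is left to the reader. You instead handle (I) by reducing the torsor tower stage by stage, using local triviality and the identity $(U \times \bA(R))_{\red} = U_{\red} \times \bA(R)$ (for which plain reducedness suffices: $\Sym(R)$ is a polynomial ring, so the product is a polynomial ring over a reduced ring), and you handle (II) by the same key identity $Y_{\red} = Y \times_X X_{\red}$ as the paper but then verify the $0$-elementary conditions by hand via your graded-nilradical observation $(Y_{\red})_0 = (Y_0)_{\red}$, rather than quoting Proposition~\ref{prop:elbc}. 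The underlying reducedness inputs are the same in both arguments; the paper's route is shorter and avoids the degree-zero bookkeeping entirely, while yours has the merit of spelling out the facts the paper leaves implicit (the graded nilradical, the descent of the $\bA(R)$-action to $Y_{\red}$, and the explicit check that reductions of $F$-equivalences are $F$-equivalences). One small economy you could adopt from the paper even within your setup: in case (I) there is no need to commute reduction with products locally, since $Y \times_X X_{\red}$ is already seen to be reduced and topologically equal to $Y$, which identifies it with $Y_{\red}$ at once.
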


\begin{proof}
We first handle case~(I). Consider the cartesian square
\begin{displaymath}
\xymatrix{
Y' \ar[r] \ar[d]_{f'} & Y \ar[d]^f \\
X_{\red} \ar[r] & X }
\end{displaymath}
The map $f'$ is purely elementary since it is a base change of such a map. Since $X_{\red} \to X$ is a surjective closed immersion, so is $Y' \to Y$. Since $X_{\red}$ is reduced and $f'$ is a sequence of affine space torsors, it follows that $Y'$ is also reduced. Thus $Y'=Y_{\red}$, and necessarily $f'=f_{\red}$. We thus see that $f_{\red}$ is purely elementary.

We now consider case~(II). Consider the above cartesian square once again. Since $f$ is smooth, so is $f'$, and thus $Y'$ is reduced. Thus, as before, we see that $Y'=Y_{\red}$ and $f'=f_{\red}$. We thus see that $f_{\red}$ is 0-elementary, as it is the base change of such a map.

Case~(III) follows directly from cases~(I) and~(II), and case~(IV) follows from case~(III) and the fact that if $f$ is an $F$-equivalence then so is $f_{\red}$.
\end{proof}

\begin{proposition} \label{prop:elfrob}
Let $f \colon Y \to X$ be a morphism of affine $\GL$-prevarieties, let $q$ be a characteristic power, and consider the induced morphism
\begin{displaymath}
f^{(q)} \colon Y^{(q)} \to X^{(q)}
\end{displaymath}
on Frobenius twists (over $K$). If $f$ belongs to one of the classes (I)--(IV) then so does $f^{(q)}$.
\end{proposition}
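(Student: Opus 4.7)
The plan is to treat the four classes in order, in each case leveraging that the Frobenius twist $(-)^{(q)}$ over $K$ is the base change along the $q$-th power Frobenius $\phi_q \colon \Spec(K) \to \Spec(K)$. The proof therefore mirrors that of Proposition~\ref{prop:elbc}: since base change is functorial and commutes with fiber products, each of the structural conditions defining the classes (I)--(IV) is preserved by this particular base change.

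For (I), write $f = f_n \circ \cdots \circ f_1$ with $f_i$ an $\bA(R_i)$-torsor for an irreducible polynomial representation $R_i$ of positive degree. Functoriality of Frobenius twist gives $f^{(q)} = f_n^{(q)} \circ \cdots \circ f_1^{(q)}$, and the natural identification $\Sym(R_i)^{(q)} \cong \Sym(R_i^{(q)})$ yields an isomorphism $\bA(R_i)^{(q)} \cong \bA(R_i^{(q)})$ of group $\GL$-schemes. Since base change of torsors is a torsor, each $f_i^{(q)}$ is an $\bA(R_i^{(q)})$-torsor; moreover $R_i^{(q)}$ is again irreducible (its lattice of subfunctors coincides with that of $R_i$, since the underlying functor is unchanged) and of positive degree $q \deg(R_i)$. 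Thus $f^{(q)}$ is purely elementary. For (II), the decomposition $Y = Y_0 \times_{X_0} X$ with $f_0$ smooth and surjective base-changes to $Y^{(q)} = Y_0^{(q)} \times_{X_0^{(q)}} X^{(q)}$ with $f_0^{(q)}$ smooth and surjective, since smoothness, surjectivity, and the formation of fiber products are all preserved under base change, and $(-)_0$ commutes with Frobenius twist.

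Case (III) is immediate by applying (I) and (II) to the factorization of an elementary map. For case (IV), write $f = f_2 \circ f_1$ with $f_1$ an $F$-equivalence and $f_2$ elementary; then $f^{(q)} = f_2^{(q)} \circ f_1^{(q)}$, Proposition~\ref{prop:frob-equiv} (with $A = K$) shows that $f_1^{(q)}$ is again an $F$-equivalence, and (III) shows that $f_2^{(q)}$ is elementary. Hence $f^{(q)}$ is $F$-elementary. No individual step should present serious difficulty; the main piece of bookkeeping is the identification $\bA(R)^{(q)} \cong \bA(R^{(q)})$ together with the fact that the torsor structure transforms compatibly under it, which is immediate from the definition of $(-)^{(q)}$ applied to $\Sym(R)$.
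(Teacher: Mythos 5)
Your proof is correct and follows essentially the same route as the paper: case (I) via the fact that the Frobenius twist of an $\bA(R)$-torsor is an $\bA(R^{(q)})$-torsor (the paper gets this by redoing Proposition~\ref{prop:torsor-frob} relative to $\Spec(K)$, you by viewing the twist as base change along the absolute Frobenius of $\Spec(K)$ -- the same observation), case (II) via compatibility of the twist with fiber products and with smooth surjective maps, and cases (III)--(IV) by combining these with Proposition~\ref{prop:frob-equiv}. The only loose phrase is your justification that $R_i^{(q)}$ is irreducible (``the underlying functor is unchanged'' is not literally accurate, since the values are scalar-twisted), but the fact that Frobenius twist preserves irreducibility is standard and is used implicitly by the paper as well.
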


\begin{proof}
For case (I), it suffices to observe that if $f:Y \to X$ is an
$\bA(R)$-torsor, where $R$ is an irreducible polynomial representation,
then $f^{(q)}: Y^{(q)} \to X^{(q)}$ is an $\bA(R^{(q)})$-torsor. The proof
is almost identical to the proof of Proposition~\ref{prop:torsor-frob},
but note that there $Y^{(q)}$ stands for the Frobenius twist relative to
$X$, while here $Y^{(q)}$ and $X^{(q)}$ stand for the Frobenius twists
relative to $\Spec(K)$. For case (II), we note that if $f_0$ is surjective
and smooth, then so is $f_0^{(q)}$, and if $Y=Y_0 \times_{X_0} X$, then
$Y^{(q)} = Y_0^{(q)} \times_{X_0^{(q)}} X^{(q)}$. Case (III) follows from
cases (I) and (II). Case (IV) follows from case (III) and the fact that
the Frobenius twist of an $F$-equivalence is again an $F$-equivalence
by Proposition~\ref{prop:frob-equiv}. 
\end{proof}

We now prove a few results that will allow us to change the order of 
compositions in the definitions above. 

\begin{lemma} \label{lem:elcomp-1}
Consider maps of affine $\GL$-prevarieties
\begin{displaymath}
f \colon Z \to Y, \qquad g \colon Y \to X
\end{displaymath}
where $f$ is 0-elementary and $g$ is purely elementary. Then $g \circ f$ is elementary.
\end{lemma}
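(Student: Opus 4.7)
The plan is to verify the alternative characterization of ``elementary'' stated in the definition: I need to check that $(g \circ f)_0 \colon Z_0 \to X_0$ is smooth and surjective, and that the natural map $Z \to Z_0 \times_{X_0} X$ is purely elementary. The first condition is a one-line consequence of the hypotheses: we have $(g \circ f)_0 = g_0 \circ f_0$, and $g_0$ is an isomorphism (since $g$ is a composition of $\bA(R_i)$-torsors with $R_i$ of positive degree, each factor induces the identity on $0$-parts, so $g_0$ is an iso on $0$-parts, which is in fact the identity isomorphism after the natural identification), while $f_0$ is smooth and surjective since $f$ is $0$-elementary.

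For the second condition, the key is a routine fiber product manipulation. By definition of $0$-elementary, $Z = Z_0 \times_{Y_0} Y$. Using the isomorphism $g_0 \colon Y_0 \xrightarrow{\sim} X_0$ to transport the $Y_0$-based fiber product to an $X_0$-based one, we get $Z \cong Z_0 \times_{X_0} Y$, where $Z_0 \to X_0$ is the smooth surjection $g_0 \circ f_0$ from (i), and $Y \to X_0$ factors through $g$ and $X \to X_0$. By associativity of fiber products,
\[
Z \;\cong\; Z_0 \times_{X_0} Y \;\cong\; (Z_0 \times_{X_0} X) \times_X Y,
\]
and under this identification the natural map $Z \to Z_0 \times_{X_0} X$ is precisely the base change of $g \colon Y \to X$ along $Z_0 \times_{X_0} X \to X$. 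Since $g$ is purely elementary, Proposition~\ref{prop:elbc} shows that its base change is purely elementary, which is exactly what we needed.

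The only subtlety (rather than a genuine obstacle) is checking that the map of interest really is the base change of $g$, and this is where the isomorphism $g_0 \colon Y_0 \xrightarrow{\sim} X_0$ is essential: without it, $Z_0 \times_{Y_0} Y$ and $Z_0 \times_{X_0} Y$ would differ, and the fiber product rewriting above would fail. All other steps are either quoted from Proposition~\ref{prop:elbc} or amount to unwinding definitions.
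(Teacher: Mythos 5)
Your proof is correct and follows essentially the same route as the paper: the paper factors $g \circ f$ through $Z_0 \times_{X_0} X$, showing the first map is purely elementary (as the base change of $g$, using $Z = Z_0 \times_{Y_0} Y$ and $Y_0 = X_0$) and the second is $0$-elementary (since $g_0 \circ f_0$ is smooth and surjective), which is exactly the two conditions you verify via the alternative characterization of elementary maps.
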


\begin{proof}
Consider the composition
\begin{displaymath}
\xymatrix{
Z \ar[r]^-a & Z_0 \times_{X_0} X \ar[r]^-b & X }
\end{displaymath}
We have $g \circ f = b \circ a$. We show that $a$ is purely elementary and $b$ is 0-elementary, which will prove the lemma.

We begin with $a$. Since $f$ is 0-elementary, we have $Z=Z_0 \times_{Y_0} Y$, and since $g$ is purely elementary, we have $Y_0=X_0$. Thus $a$ is simply the base change of $g$ along $Z_0 \times_{X_0} X \to X$, and is therefore purely elementary.

To show that $b$ is 0-elementary, it suffices to show that $b_0$ is smooth and surjective. Now, $b_0$ is simply $g_0 \circ f_0$. Since $g_0$ is an isomorphism and $f_0$ is smooth and surjective, the result follows.
\end{proof}

\begin{lemma} \label{lem:elcomp-2}
Consider maps of affine $\GL$-prevarieties
\begin{displaymath}
f \colon Z \to Y, \qquad g \colon Y \to X
\end{displaymath}
where $f$ is elementary and $g$ is an $F$-equivalence. Assume that $X$ and $Y$ are reduced. Then $g \circ f$ is $F$-elementary.
\end{lemma}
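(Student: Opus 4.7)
The plan is to use Proposition~\ref{prop:frob-factor} to compensate for the failure of $g$ to be an isomorphism by passing up a Frobenius twist, and then to realize $g \circ f$, up to an $F$-equivalence, as a pullback of $f^{(q)}$.

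First, since $X$ is reduced and $g \colon Y \to X$ is an $F$-equivalence of finitely $\GL$-generated $\GL$-algebras, I would apply Proposition~\ref{prop:frob-factor} to the associated ring map $g^\ast \colon K[X] \to K[Y]$. This yields a characteristic power $q$ and a $\GL$-equivariant morphism $\sigma \colon X \to Y^{(q)}$ such that $\sigma \circ g = F_Y$, where $F_Y \colon Y \to Y^{(q)}$ is the Frobenius relative to $K$. Since both $F_Y$ and $g$ are $F$-equivalences, the cancellation property from \S\ref{ss:frob} forces $\sigma$ to be an $F$-equivalence as well.

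Next I would form the fibre product $Z' := X \times_{Y^{(q)}} Z^{(q)}$, using $\sigma$ on one side and $f^{(q)} \colon Z^{(q)} \to Y^{(q)}$ on the other. By Proposition~\ref{prop:elfrob} the Frobenius twist $f^{(q)}$ is elementary, and by Proposition~\ref{prop:elbc} its base change $\pi \colon Z' \to X$ along $\sigma$ is elementary. Using the naturality identity $f^{(q)} \circ F_Z = F_Y \circ f$, we compute that $\sigma \circ (g \circ f) = F_Y \circ f = f^{(q)} \circ F_Z$, so the universal property of the fibre product produces a canonical $\GL$-morphism $\alpha \colon Z \to Z'$ whose two components are $g \circ f \colon Z \to X$ and $F_Z \colon Z \to Z^{(q)}$; in particular, $\pi \circ \alpha = g \circ f$.

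To finish, it remains to show that $\alpha$ is an $F$-equivalence; then $g \circ f = \pi \circ \alpha$ exhibits $g \circ f$ as $F$-elementary. For this, consider the second projection $\mathrm{pr}_2 \colon Z' \to Z^{(q)}$, which is the base change of $\sigma$ along $f^{(q)}$ and is therefore an $F$-equivalence by Proposition~\ref{prop:F-equiv-bc}. Since $\mathrm{pr}_2 \circ \alpha = F_Z$ is also an $F$-equivalence, cancellation forces $\alpha$ to be one. The step most deserving of care is the verification that $\sigma \circ g = F_Y$ really holds as a map of $\GL$-schemes (not just of schemes) and the associated naturality diagram for Frobenius twists relative to $K$; everything else is a bookkeeping argument that reduces to properties already established in \S\ref{ss:frob} and the permanence results in Propositions~\ref{prop:elbc} and~\ref{prop:elfrob}.
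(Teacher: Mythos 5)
Your proposal is correct and follows essentially the same route as the paper: apply Proposition~\ref{prop:frob-factor} to obtain $\sigma\colon X \to Y^{(q)}$ with $\sigma \circ g = F_Y$, form the fibre product $X \times_{Y^{(q)}} Z^{(q)}$, use Propositions~\ref{prop:elfrob} and~\ref{prop:elbc} to see its projection to $X$ is elementary, and conclude that the canonical map from $Z$ is an $F$-equivalence via Proposition~\ref{prop:F-equiv-bc} and cancellation against $F_Z$. This matches the paper's argument step for step (your $\sigma$, $Z'$, $\pi$, $\alpha$, $\mathrm{pr}_2$ are the paper's $g'$, $X'$, $b$, $a$, $a'$).
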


\begin{proof}
Since $X$ and $Y$ are reduced and $g$ is an $F$-equivalence, some
Frobenius map $Y \to Y^{(q)}$ factors as $g' \circ g$
(Proposition~\ref{prop:frob-factor}). Consider the following diagram:
\begin{displaymath}
\xymatrix{
Z \ar[r]^a \ar[d]^f & X' \ar[r]^{a'} \ar[d]^b & Z^{(q)} \ar[d]^{f^{(q)}} \\
Y \ar[r]^g & X \ar[r]^{g'} & Y^{(q)} }
\end{displaymath}
Here $X'$ is defined to be the fiber product of $X$ and $Z^{(q)}$ over
$Y^{(q)}$, $a$ comes from the universal property of fiber products,
and both rows compose to the Frobenius morphisms. Since $f^{(q)}$
is elementary by Proposition~\ref{prop:elfrob}, it follows from
Proposition~\ref{prop:elbc} that $b$ is as well.

Since $g$ and $g' \circ g$ are $F$-equivalences, so is $g'$, and so is
$a'$ by base change (Proposition~\ref{prop:F-equiv-bc}). 
Since $a'$ and $a' \circ a$ are $F$-equivalences, so is $a$. We thus
see that $g \circ f = b \circ a$ is $F$-elementary.
\end{proof}

\begin{proposition} \label{prop:elcomp}
Consider maps of affine $\GL$-prevarieties
\begin{displaymath}
f \colon Z \to Y, \qquad g \colon Y \to X.
\end{displaymath}
If $f$ and $g$ are both of the same class (I)--(III) then so is $g \circ f$. The same is true for class (IV) provided that $X$ and $Y$ are reduced.
\end{proposition}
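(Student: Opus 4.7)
My plan is to handle the four cases in order, each building on the previous. Cases (I) and (II) are routine: for (I), concatenating the torsor factorizations of $f$ and $g$ immediately gives one for $g \circ f$; for (II), $g_0 \circ f_0 \colon Z_0 \to X_0$ is smooth and surjective since $f_0$ and $g_0$ are, and by associativity of fiber products,
\[
Z \;=\; Z_0 \times_{Y_0} Y \;=\; Z_0 \times_{Y_0} (Y_0 \times_{X_0} X) \;=\; Z_0 \times_{X_0} X,
\]
as required for $0$-elementarity.

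For case (III), write $f = f_2 \circ f_1$ and $g = g_2 \circ g_1$ with $f_1, g_1$ purely elementary and $f_2, g_2$ of type $0$-elementary, so that $g \circ f = g_2 \circ g_1 \circ f_2 \circ f_1$. The middle composition $g_1 \circ f_2$ fits the hypotheses of Lemma~\ref{lem:elcomp-1}, so it is elementary and factors as $h_2 \circ h_1$ with $h_1$ purely elementary and $h_2$ $0$-elementary. Then $g \circ f = (g_2 \circ h_2) \circ (h_1 \circ f_1)$, where the left factor is $0$-elementary by case (II) and the right is purely elementary by case (I); hence $g \circ f$ is elementary.

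For case (IV), write $f = f_2 \circ f_1$ and $g = g_2 \circ g_1$ with $f_1 \colon Z \to Z'$ and $g_1 \colon Y \to Y'$ the $F$-equivalences and $f_2, g_2$ elementary. The same swap-the-middle-pair strategy now invokes Lemma~\ref{lem:elcomp-2}, but that lemma requires both its intermediate variety $Y$ and its target $Y'$ to be reduced. The former holds by hypothesis; for the latter, since $Y$ is reduced, $g_1$ factors uniquely as $Y \to Y'_\red \hookrightarrow Y'$, yielding $\tilde{g}_1 = (g_1)_\red \colon Y \to Y'_\red$, which is still an $F$-equivalence by the preceding proposition on reductions. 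By the same proposition, $(g_2)_\red \colon Y'_\red \to X_\red = X$ is elementary, and one checks $(g_2)_\red \circ \tilde{g}_1 = g$. Replacing the factorization of $g$ by this one, Lemma~\ref{lem:elcomp-2} now applies to $\tilde{g}_1 \circ f_2$ and produces $h_2 \circ h_1$ with $h_1$ an $F$-equivalence and $h_2$ elementary. Then
\[
g \circ f \;=\; \bigl((g_2)_\red \circ h_2\bigr) \circ \bigl(h_1 \circ f_1\bigr),
\]
whose left factor is elementary by case (III) and whose right factor is an $F$-equivalence, so $g \circ f$ is $F$-elementary.

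The main obstacle is the reducedness bookkeeping in case (IV): Lemma~\ref{lem:elcomp-2} does not apply to the middle subcomposition as it stands, and one must first pass from $Y'$ to $Y'_\red$ so that its hypotheses hold. Once this adjustment is made, the rest is a formal rearrangement, identical in spirit to the one used for case (III).
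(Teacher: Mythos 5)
Your proof is correct and follows the paper's argument essentially verbatim in cases (I)--(III) and in the overall ``swap the middle pair'' strategy for case (IV). The only divergence is the reducedness bookkeeping there: the paper simply observes that $Y'$ is automatically reduced (since $g_2 \colon Y' \to X$ is elementary and $X$ is reduced), so Lemma~\ref{lem:elcomp-2} applies directly, whereas your detour through $Y'_{\red}$ and the reduction proposition is a valid but unnecessary extra step.
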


\begin{proof}
This is clear in case~(I). We now treat case~(II). It is clear that $(g \circ f)_0=g_0 \circ f_0$ is smooth and surjective. We have
\begin{displaymath}
Z = Z_0 \times_{Y_0} Y = Z_0 \times_{Y_0} (Y_0 \times_{X_0} X) = Z_0 \times_{X_0} X,
\end{displaymath}
and so $g \circ f$ is 0-elementary.

We now treat case~(III). Write $f=f_2 \circ f_1$ and $g=g_2 \circ g_1$ where $f_1$ and $g_1$ are purely elementary and $f_2$ and $g_2$ are 0-elementary. By Lemma~\ref{lem:elcomp-1}, we have $g_1 \circ f_2 = b \circ a$, where $b$ is 0-elementary and $a$ is purely elementary. Thus
\begin{displaymath}
g \circ f = g_2 \circ b \circ a \circ f_1.
\end{displaymath}
Since $g_2 \circ b$ is 0-elementary by case~(II) and $a \circ f_1$ is purely elementary by case~(I), it follows that $g \circ f$ is elementary.

We now treat case~(IV). Write $f=f_2 \circ f_1$ and $g=g_2 \circ g_1$
where $f_1$ and $g_1$ are $F$-equivalences and $f_2$ and $g_2$ are
elementary. Let $Z'$ be the target of $f_1$ and let $Y'$ be the target
of $g_1$. Since $g_2 \colon Y' \to X$ is elementary and $X$ is reduced, it follows that $Y'$ is reduced. Thus Lemma~\ref{lem:elcomp-2} shows that $g_1 \circ f_2$ is $F$-elementary, and so of the form $b \circ a$, where $a$ is an $F$-equivalence and $b$ is elementary. Thus
\begin{displaymath}
g \circ f = g_2 \circ b \circ a \circ f_1.
\end{displaymath}
Since $g_2 \circ b$ is elementary by case~(III) and $a \circ f_1$ is an $F$-equivalence, it follows that $g \circ f$ is $F$-elementary.
\end{proof}

\begin{proposition} \label{prop:elshift}
Let $f \colon Y \to X$ be a morphism of affine $\GL$-prevarieties, and consider the induced morphism
\begin{displaymath}
\Sh_n(f) \colon \Sh_n(Y) \to \Sh_n(X)
\end{displaymath}
on shifts. If $f$ belongs to one of the classes (II)--(IV) then so does $\Sh_n(f)$.
\end{proposition}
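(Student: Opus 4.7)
The plan is to treat the three classes in turn, reducing case (III) to the case of $\Sh_n$ applied to a single torsor for an irreducible positive-degree representation. Throughout I will use the identity $(\Sh_n X)\{V\} = X\{K^n \oplus V\}$, so $(\Sh_n X)_0 = X\{K^n\}$, and that $\Sh_n$ commutes with fiber products.

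Case (II) will be a direct computation. Assuming $Y = Y_0 \times_{X_0} X$ with $f_0$ smooth and surjective, the associativity of fiber products gives
\begin{displaymath}
\Sh_n Y\{V\} = Y_0 \times_{X_0} X\{K^n \oplus V\} = (Y_0 \times_{X_0} X\{K^n\}) \times_{X\{K^n\}} \Sh_n X\{V\} = (\Sh_n Y)_0 \times_{(\Sh_n X)_0} \Sh_n X\{V\},
\end{displaymath}
and $(\Sh_n f)_0 = f\{K^n\}$ is the base change of $f_0$ along $X\{K^n\} \to X_0$, hence smooth and surjective.

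For case (III), I will factor $f = f_2 \circ f_1$ with $f_1$ purely elementary and $f_2$ 0-elementary. By case (II) and Proposition~\ref{prop:elcomp}, which says that composition of elementary maps is elementary, it suffices to show $\Sh_n f_1$ is elementary; decomposing $f_1$ into a tower of $\bA(R_i)$-torsors with each $R_i$ irreducible of positive degree further reduces the problem to a single $\bA(R)$-torsor $g \colon \widetilde{X} \to X$ with $R$ irreducible of positive degree. Then $\Sh_n g$ is an $\bA(\Sh_n R)$-torsor, and the homogeneous decomposition of polynomial functors splits $\Sh_n R = R' \oplus W$ with $W = R\{K^n\}$ the constant (degree-$0$) piece and $R'$ the positive-degree piece. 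Here $W$ is a trivial $\GL$-representation, since the $\GL$-action on $\Sh_n R$ factors through $g \mapsto \diag(1_n, g)$, which fixes $K^n$ pointwise. Applying the quotient construction of \S\ref{s:torsor} to $0 \to W \to \Sh_n R \to R' \to 0$, I will factor $\Sh_n g$ as $\Sh_n \widetilde{X} \to \Sh_n \widetilde{X}/\bA(R') \to \Sh_n X$: the first map is an $\bA(R')$-torsor, purely elementary by filtering $R'$ further into positive-degree irreducible subquotients, and the second is an $\bA(W)$-torsor, which I will claim is 0-elementary.

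The hard part will be verifying that $\bA(W)$-torsors with $W$ a trivial finite-dimensional representation are 0-elementary. For such a torsor $\pi \colon Z \to X$, the evaluation $\pi_0 \colon Z_0 \to X_0$ is itself an $\bA(W)$-torsor of schemes (since $\bA(W)\{V\} = \bA(W)$ is independent of $V$), hence smooth and surjective. The natural map $Z \to Z_0 \times_{X_0} X$ is a morphism of $\bA(W)$-torsors over $X$: both the augmentation $Z \to Z_0$ and the torsor action are natural in $V$, making the augmentation $\bA(W)$-equivariant, and triviality of $W$ means $\bA(W)$ acts on the fiber product only through the $Z_0$ factor. Any $\bA(W)$-equivariant map between $\bA(W)$-torsors over a common base is an isomorphism (trivialize Zariski-locally so that the map becomes a translation), so $Z = Z_0 \times_{X_0} X$. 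Finally, for case (IV), I will write $f = f_2 \circ f_1$ with $f_1$ an $F$-equivalence and $f_2$ elementary: $\Sh_n f_2$ is elementary by case (III), and $\Sh_n f_1$ is still an $F$-equivalence since this property depends only on the underlying ring map, which $\Sh_n$ preserves up to the abstract isomorphism $\Sh_n A \cong A$ induced by any identification $\bV \cong K^n \oplus \bV$. Hence $\Sh_n f$ is $F$-elementary by definition; the appearance of the constant piece $W$ in the argument above is precisely what prevents class (I) from being preserved by $\Sh_n$.
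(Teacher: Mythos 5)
Your overall route is the same as the paper's: case (II) is the same direct computation; for case (III) you reduce to shifting a single $\bA(R)$-torsor and split $\Sh_n R$ into its positive-degree part and its constant part $W=R\{K^n\}$, handling the first as a tower of torsors for irreducible positive-degree representations and the second as a torsor under a constant vector group, which you claim is 0-elementary; case (IV) is handled exactly as in the paper. The paper does the same thing, only phrased via a composition series of $\Sh_n(P)$ whose degree-zero simple factors play the role of your $W$, and it likewise asserts (without proof) that the degree-zero torsor steps are 0-elementary. So the approach is essentially identical; your splitting $\Sh_n R=R'\oplus W$ plus the quotient construction of \S\ref{s:torsor} is just a slightly different bookkeeping of the same composition series, and conveniently puts the purely elementary part before the 0-elementary one, as the definition of ``elementary'' requires.

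The one place where you attempt more detail than the paper is also the only soft spot. Your justification that $\pi_0\colon Z_0\to X_0$ is an $\bA(W)$-torsor of schemes --- ``since $\bA(W)\{V\}=\bA(W)$ is independent of $V$'' --- only gives that the action restricts to level $0$ and that it is simply transitive there (evaluate the isomorphism $\bA(W)\times Z\to Z\times_X Z$ at the zero vector space); it does not give surjectivity or Zariski-local triviality of $Z_0\to X_0$, since the trivializing cover of $X$ and the trivializations are not pulled back from $X_0$ and are not functorial in the vector-space variable. Your later step (an equivariant map of torsors over $X$ is an isomorphism) needs exactly this. A clean patch: the map $\init_1\colon B_{\le 1}\to A\otimes W$ is a surjection of polynomial representations, and taking the degree-zero homogeneous component is an exact functor, so each $1\otimes w_i$ lifts to a $\GL$-invariant $y_i\in B_{\le 1}\cap B\{0\}$ with $\Delta(y_i)=y_i\otimes 1+1\otimes w_i$. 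These $y_i$ give a $\GL$-equivariant trivialization $B\cong A[y_1,\dots,y_m]$, from which one reads off directly that $B\{0\}=A\{0\}[y_1,\dots,y_m]$ (so $Z_0\to X_0$ is smooth and surjective) and that $B\cong B\{0\}\otimes_{A\{0\}}A$, i.e.\ $Z\cong Z_0\times_{X_0}X$; this makes your ``morphism of torsors'' step unnecessary. With that repair your argument is complete, and it in fact supplies a proof of the assertion the paper leaves implicit.
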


\begin{proof}
First suppose that $f$ is an $\bA(P)$-torsor for some polynomial representation $P$. Then $\Sh_n(f)$ is a torsor for $\bA(\Sh_n(P))$. Choosing a composition series for $\Sh_n(P)$ with simple quotients $R_1, \ldots, R_m$, we see that $\Sh_n(f)$ factors as $f_m \circ \cdots \circ f_1$, where $f_i$ is an $R_i$-torsor. If $R_i$ has positive degree then $f_i$ is purely elementary, while if $R_i$ has degree~0 (meaning it is the one-dimensional trivial representation) then $f_i$ is 0-elementary. We thus see that $\Sh_n(f)$ is elementary, as it is a composition of elementary maps. (This analysis shows why purely elementary maps are not closed under shifting.)

We now consider case~(II). Put $f'=\Sh_n(f)$, $X'=\Sh_n(X)$, and $Y'=\Sh_n(Y)$, and regard $\Sh_n$ as restricting to the subgroup $G(n)$. The map $f'_0$ is the map $f \colon Y\{K^n\} \to X\{K^n\}$ induced by $f$. Since this map is the base change of $f_0$ along $X\{K^n\} \to X_0$, it is smooth. We have
\begin{displaymath}
Y'_0 \times_{X'_0} X' = Y\{K^n\} \times_{X\{K^n\}} X.
\end{displaymath}
Since $Y\{K^n\} = Y_0 \times_{X_0} X\{K^n\}$, the above is $Y_0
\times_{X_0} X=Y'$. Thus $f'$ is 0-elementary.

Cases~(III) and~(IV) now follow easily.
\end{proof}

\begin{proposition} \label{prop:elopen}
Let $f \colon Y \to X$ be a map of affine $\GL$-prevarieties, let $U$ be a simple open of $Y$ such that $f_0(U_0)=X_0$, and let $g \colon U \to X$ be the restriction of $X$. If $f$ is of class (III) or (IV) then so is $g$.
\end{proposition}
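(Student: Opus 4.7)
The strategy is to restrict the factorization of $f$ exhibiting its type to the simple open $U$ and check that each piece retains its class. Throughout we identify the $\GL$-invariant function $h \in K[Y]^{\GL}$ defining $U = Y[1/h]$ with an element of $K[Y_0]$: for any polynomial representation $P = \bigoplus_d P_d$ the scalar subgroup of $\GL$ acts on $P_d$ by the $d$-th power character, so ind-$\GL$-invariants are concentrated in degree $0$ and $K[Y]^{\GL} = K[Y_0]$; consequently $U_0 = Y_0[1/h]$.

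\emph{Case (III).} Factor $f = f_2 \circ f_1$ with $f_1 \colon Y \to Y_0 \times_{X_0} X$ purely elementary and $f_2 \colon Y_0 \times_{X_0} X \to X$ the $0$-elementary projection. Let $W := Y_0[1/h] \times_{X_0} X$, the simple open of $Y_0 \times_{X_0} X$ cut out by the pullback of $h$ from $Y_0$. Then $U = f_1^{-1}(W)$, so $g$ factors as $U \to W \to X$. The first map is the base change of the purely elementary $f_1$ along the open immersion $W \hookrightarrow Y_0 \times_{X_0} X$, hence purely elementary by Proposition~\ref{prop:elbc}. The second map $W \to X$ is $0$-elementary: its degree-$0$ part is the restriction of $f_0$ to $Y_0[1/h] = U_0 = W_0$, which is smooth (restriction of a smooth map) and surjective (by the hypothesis $f_0(U_0) = X_0$), and $W = W_0 \times_{X_0} X$ by construction. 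Hence $g$ is elementary.

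\emph{Case (IV).} Factor $f = f_2 \circ f_1$ with $f_1 \colon Y \to Y'$ an $F$-equivalence and $f_2 \colon Y' \to X$ elementary. Since $f_1^*$ is $\GL$-equivariant it preserves the degree grading, so it restricts to a map $(f_1)_0^* \colon K[Y'_0] \to K[Y_0]$ on degree-zero parts, and this restriction is itself an $F$-equivalence: $F$-injectivity is immediate, while for $F$-surjectivity, given $y \in K[Y_0]$ and $x \in K[Y']$ with $f_1^*(x) = y^q$, decomposing $x = x_0 + x_+$ by degree and using degree preservation forces $(f_1)_0^*(x_0) = y^q$. Apply this to $h \in K[Y_0]$ to obtain $h' \in K[Y'_0] \subseteq K[Y']^{\GL}$ and a characteristic power $q$ with $(f_1)_0^*(h') = h^q$. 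Then $U' := Y'[1/h']$ is a simple open of $Y'$ with $f_1^{-1}(U') = U$, and the restriction $f_1|_U \colon U \to U'$ is an $F$-equivalence by Proposition~\ref{prop:F-equiv-bc}. It remains to show that $f_2|_{U'} \colon U' \to X$ is elementary; by case (III) this reduces to $(f_2)_0(U'_0) = X_0$. Now $(f_1)_0$ is an $F$-equivalence (hence topologically surjective) with $(f_1)_0^{-1}(U'_0) = U_0$, so $(f_1)_0(U_0) = U'_0$, and therefore $(f_2)_0(U'_0) = f_0(U_0) = X_0$. Thus $g$ is $F$-elementary.

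The main obstacle is the observation in case (IV) that an $F$-equivalence restricts to an $F$-equivalence on degree-$0$ pieces of the coordinate rings. This lets us produce a genuine $\GL$-invariant lift $h'$ on $Y'$ rather than struggling to symmetrize or Frobenius-iterate a non-invariant representative, and simultaneously identifies $(f_1)_0$ as topologically surjective---both ingredients being needed to compare the simple opens of $Y$ and $Y'$. Everything else is formal manipulation using the permanence of torsors, $0$-elementary maps, and $F$-equivalences under base change.
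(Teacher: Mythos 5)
Your proof is correct and follows essentially the same route as the paper: in case (III) you exhibit $g$ as the base change of the purely elementary part over the simple open $U_0\times_{X_0}X$ followed by the $0$-elementary projection, and in case (IV) you transfer the simple open across the $F$-equivalence and invoke case (III). The only difference is that you spell out details the paper leaves implicit (that the invariant $h$ lives in $K[Y_0]$, and that an $F$-equivalence yields an invariant $h'$ on $Y'$ with $f_1^{-1}(Y'[1/h'])=U$ and $(f_2)_0(U'_0)=X_0$), which is a faithful elaboration rather than a new argument.
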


\begin{proof}
First consider case~(III). Since $f_0$ is smooth so is $g_0$, and $g_0$ is surjective by assumption. Consider the square
\begin{displaymath}
\xymatrix{
U \ar[r] \ar[d] & Y \ar[d] \\
U_0 \times_{X_0} X \ar[r] & Y_0 \times_{X_0} X }
\end{displaymath}
Since $U$ is simple this square is cartesian. As the right map is
purely elementary, so is the left by Proposition~\ref{prop:elbc}. It follows that $g$ is elementary.

Now consider case~(IV). Write $f=f_2 \circ f_1$ where $f_1 \colon Y \to Y'$ is an $F$-equivalence and $f_2 \colon Y' \to X$ is elementary. Since $f_1$ is an $F$-equivalence, $U'=f_1(U)$ is a simple open of $Y'$. Of course, $f_1$ induces an $F$-equivalence $g_1 \colon U \to U'$. By case~(III), $f_2$ induces an elementary map $g_2 \colon U' \to X$. Thus $g=g_2 \circ g_1$ is $F$-elementary.
\end{proof}

\subsection{Generically elementary maps}

We say that a map $f \colon Y \to X$ of irreducible affine
$\GL$-varieties is \defn{generically ($F$-)elementary} if there are
non-empty simple open subsets $V \subset Y$ and $U \subset X$ such
that $f$ induces an ($F$-)elementary map $V \to U$. Note that $f$ is
then necessarily dominant.

\begin{proposition}
Consider a cartesian square of irreducible affine $\GL$-varieties
\begin{displaymath}
\xymatrix{
Y' \ar[r]^b \ar[d]_{f'} & Y \ar[d]^f \\
X' \ar[r]^a & X }
\end{displaymath}
If $f$ is generically ($F$-)elementary then so is $f'$.
\end{proposition}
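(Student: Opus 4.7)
The plan is to pull back the simple opens witnessing generic ($F$-)elemen\-tarity of $f$ through the cartesian square and invoke the base-change result, Proposition~\ref{prop:elbc}.

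First I would pick non-empty simple opens $V=Y[1/h]\subset Y$ and $U=X[1/k]\subset X$ with $f|_V\colon V\to U$ ($F$-)elementary, where $h\in K[Y]$ and $k\in K[X]$ are $\GL$-invariants. Since $f(V)\subseteq U$, after replacing $h$ by $h\cdot f^*(k)$ I may assume $V=f^{-1}(U)$. Then set $U':=a^{-1}(U)=X'[1/a^*(k)]$ and $V':=b^{-1}(V)=Y'[1/b^*(h)]$; these are simple opens of $X'$ and $Y'$ respectively. The identity $V=f^{-1}(U)$ together with the cartesian hypothesis forces the restricted square
\[
\xymatrix@R=1pc{V' \ar[r] \ar[d]_{f'|_{V'}} & V \ar[d]^{f|_V} \\ U' \ar[r] & U}
\]
to be itself cartesian. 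Proposition~\ref{prop:elbc} then yields directly that $f'|_{V'}\colon V'\to U'$ is ($F$-)elementary, being a base change of the ($F$-)elementary map $f|_V$.

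The main obstacle is to verify that $V'$ and $U'$ are non-empty, so that the pair $(V',U')$ genuinely witnesses the generic ($F$-)elementarity of $f'$. Since $K[Y']$ and $K[X']$ are domains (as $Y',X'$ are irreducible), this amounts to showing $b^*(h)\ne 0$ and $a^*(k)\ne 0$. Observe that once $U'\ne\emptyset$ is established, surjectivity of the elementary map $f|_V\colon V\to U$ together with the cartesian structure gives $V'\ne\emptyset$ automatically, as the base change of a surjection is surjective. So the crux reduces to arranging $a(X')\cap U\ne\emptyset$.

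The hard part will be this non-emptiness step. We have some flexibility: by replacing $U$ with any non-empty simple sub-open (and $V$ with its preimage) the restriction $f|_V$ stays ($F$-)elementary by Proposition~\ref{prop:elbc}. Using this flexibility together with the fact that $Y'$ is a non-empty irreducible $\GL$-variety with non-empty image $b(Y')\subseteq Y$ lying over $a(X')\subseteq X$, one argues that the elementary locus of $f$ and the image $a(X')$ must interact non-trivially; concretely, since the closure of $a(X')$ is an irreducible closed $\GL$-subvariety of $X$, one can produce a $\GL$-invariant refinement of $U$ meeting $a(X')$, and then pull back as above. This is the only delicate point; everything else is formal from Proposition~\ref{prop:elbc} and the definition of a simple open.
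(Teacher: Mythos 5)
Your core argument is exactly the paper's proof: take non-empty simple opens $V\subset Y$ and $U\subset X$ witnessing generic ($F$-)elementarity, observe that the square formed by $b^{-1}(V)$, $a^{-1}(U)$, $V$, $U$ is cartesian, and apply Proposition~\ref{prop:elbc}. One detail to fix: the reduction to $V=f^{-1}(U)$ is neither achieved by your replacement nor needed. Since $f(V)\subseteq U$ already holds, we have $V\subseteq f^{-1}(U)$, so replacing $h$ by $h\cdot f^*(k)$ changes nothing and certainly does not give the reverse inclusion. But no such identity is required: using only $f(V)\subseteq U$ and the fact that $U\to X$ is a monomorphism, one computes
\begin{displaymath}
b^{-1}(V)\;=\;Y'\times_Y V\;=\;X'\times_X V\;=\;(X'\times_X U)\times_U V\;=\;a^{-1}(U)\times_U V,
\end{displaymath}
so the restricted square is automatically cartesian and Proposition~\ref{prop:elbc} applies.

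The non-emptiness issue you single out as ``the hard part'' is the only real divergence from the paper, and there your sketch is not an argument: the paper's proof simply pulls back and stops, and the proposed fix (produce a $\GL$-invariant refinement of $U$ meeting $a(X')$) cannot be carried out in the stated generality, because $a(X')$ may be contained in the closed complement of the entire locus over which $f$ is elementary. Concretely, with trivial $\GL$-actions take $X=\Spec K[s,t]$, $Y=\Spec K[s,x]$, $f^*(s)=s$, $f^*(t)=sx$, and let $a\colon X'=\Spec K[s,t]/(s)\hookrightarrow X$; then $f$ is an isomorphism over $X[1/s]$, hence generically elementary, the fiber product $Y'\cong\Spec K[x]$ is an irreducible variety, but $f'$ is the constant map to the origin, which is not dominant and hence not generically elementary. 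So no refinement of $U$ can meet $a(X')$ here, and no argument can close the gap you identified without an additional hypothesis guaranteeing $a^{-1}(U)\neq\varnothing$ (for instance $a$ dominant, or an open immersion with image meeting $U$), which is how the result is meant to be applied. In short: your proof, minus the speculative final paragraph, coincides with the paper's; the delicate point you flagged is real, but it is a tacit assumption in the statement rather than a step the paper's proof supplies and yours is missing.
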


\begin{proof}
Let $V \subset Y$ and $U \subset X$ be non-empty simple opens such that $f$ induces an ($F$-)elementary map $V \to U$. The square
\begin{displaymath}
\xymatrix{
b^{-1}(V) \ar[r] \ar[d]_{f'} & V \ar[d]^f \\
a^{-1}(U) \ar[r] & U }
\end{displaymath}
is cartesian, and so $f' \colon b^{-1}(V) \to a^{-1}(U)$ is ($F$-)elementary (Proposition~\ref{prop:elbc}). The result follows.
\end{proof}

\begin{lemma}
Let $f \colon Y \to X$ be a generically ($F$-)elementary map of irreducible $\GL$-varieties and let $V$ be a non-empty simple open subset of $Y$. Then $f$ induces a generically ($F$-)elementary map $V \to X$.
\end{lemma}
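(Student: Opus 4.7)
The plan is to produce the required simple opens by intersecting $V$ with the given witness, and then shrinking the target enough to meet the surjectivity hypothesis of Proposition~\ref{prop:elopen}. By hypothesis there exist non-empty simple opens $V^\ast \subset Y$ and $U^\ast \subset X$ such that $f$ restricts to an ($F$-)elementary map $V^\ast \to U^\ast$. Set $W := V \cap V^\ast$; as an intersection of two simple opens of $Y$ it is itself a simple open of $Y$ (in particular of both $V$ and $V^\ast$), and it is non-empty because $Y$ is irreducible. Applying Proposition~\ref{prop:elopen} directly to $W \subset V^\ast$ and target $U^\ast$ can fail because the image of $W_0$ in $U^\ast_0$ need not exhaust $U^\ast_0$, so the target must be shrunk next.

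The map $(f|_{V^\ast})_0 \colon V^\ast_0 \to U^\ast_0$ of finite-type schemes is open: in the elementary case it is smooth and surjective, while in the $F$-elementary case it factors as a composition of a universal homeomorphism (the $0$-part of an $F$-equivalence of finitely generated $K$-algebras, via Proposition~\ref{prop:fg-F-equiv}) with a smooth surjection. Therefore $(f|_{V^\ast})_0(W_0)$ is a non-empty open subset of the finite-dimensional affine variety $U^\ast_0$. A routine manipulation with invariants (using that $\GL$-invariants on $X$ restrict to functions on $X_0$, and that simple opens of $U^\ast$ are precisely simple opens of $X$ contained in $U^\ast$) lets us choose a $\GL$-invariant $k \in K[X]$ so that $U' := X[1/k]$ is a non-empty simple open sitting inside $U^\ast$ with $U'_0 \subset (f|_{V^\ast})_0(W_0)$.

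Now set $W' := W \cap f^{-1}(U')$, which equals $W[1/f^\ast(k)]$ and is therefore a simple open of $V$. By Proposition~\ref{prop:elbc}, the base change $V^\ast \cap f^{-1}(U') \to U'$ of $f|_{V^\ast}$ along $U' \hookrightarrow U^\ast$ is ($F$-)elementary, and $W'$ is a simple open of its source. On finite-dimensional parts, the image of $W'_0$ in $U'_0$ equals $(f|_{V^\ast})_0(W_0) \cap U'_0 = U'_0$ by construction, so the surjectivity hypothesis of Proposition~\ref{prop:elopen} is met. That proposition then exhibits $W' \to U'$ as ($F$-)elementary, which is precisely what it means for the restriction $V \to X$ to be generically ($F$-)elementary.

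The main obstacle is verifying the openness of $(f|_{V^\ast})_0$ in the $F$-elementary case; this is what forces the appeal to Proposition~\ref{prop:fg-F-equiv}. The remaining steps are an exercise in combining Propositions~\ref{prop:elbc} and~\ref{prop:elopen} with the standard identification of $\GL$-invariants on $X$ with functions on $X_0$.
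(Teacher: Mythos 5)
Your proposal is correct and takes essentially the same route as the paper's proof: intersect $V$ with the witness open, shrink the target so that the level-zero surjectivity hypothesis holds, and conclude with Propositions~\ref{prop:elbc} and~\ref{prop:elopen} (the paper just organizes this as a reduction to the case where $f$ itself is ($F$-)elementary). The only real difference is how the small open in the base is produced: the paper uses dominance of $V_0 \to X_0$ together with Chevalley's theorem, while you use openness of the level-zero map, which additionally requires the (easy, but worth stating) observation that the degree-zero part of a $\GL$-equivariant $F$-equivalence is again an $F$-equivalence, so that its spectrum map is a homeomorphism; both arguments work.
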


\begin{proof}
First suppose that $f$ is ($F$-)elementary. Since $f_0 \colon Y_0 \to
X_0$ is surjective, it follows that $g_0 \colon V_0 \to X_0$ is
dominant. Thus, by Chevalley's theorem, there is a non-empty open
affine subset $U_0$ of $X_0$ contained in $g_0(V_0)$. Let $U=U_0
\times_{X_0} X$, which is a simple open subset of $X$. The map
$f^{-1}(U) \to U$ is ($F$-)elementary (Proposition~\ref{prop:elbc}). Put $V'=V \cap f^{-1}(U)$. This is a simple open and the map $f_0 \colon V'_0 \to U_0$ is surjective. Thus $f$ induces an ($F$-)elementary map $V' \to U$ (Proposition~\ref{prop:elopen}), which shows that $f \colon V \to X$ is generically ($F$-)elementary.

We now treat the general case. Let $Y' \subset Y$ and $X' \subset X$ be simple opens such that $f$ induces an ($F$-)elementary map $Y' \to X'$. By the previous paragraph, $Y' \cap V \to X'$ is generically ($F$-)elementary. This clearly implies the same for $f \colon V \to X$.
\end{proof}

\begin{proposition} \label{prop:genelcomp}
Let $f \colon Z \to Y$ and $g \colon Y \to X$ be generically ($F$-)elementary maps of affine $\GL$-varieties. Then $g \circ f$ is generically ($F$-)elementary.
\end{proposition}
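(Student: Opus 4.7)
The plan is to shrink the given simple opens so that the images and domains match up on a common elementary piece, and then to invoke Proposition~\ref{prop:elcomp}. By hypothesis, we have non-empty simple opens $V_1 \subset Z$ and $U_1 \subset Y$ with $f \colon V_1 \to U_1$ being ($F$-)elementary, and non-empty simple opens $V_2 \subset Y$ and $U_2 \subset X$ with $g \colon V_2 \to U_2$ being ($F$-)elementary. A priori $U_1$ and $V_2$ are unrelated, so the two cannot be composed directly. The first step will be to form $W := U_1 \cap V_2$, a non-empty simple open of $Y$ (non-empty because $Y$ is irreducible and both factors are non-empty), which is in particular a simple open of $V_2$.

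The next step will be to apply the preceding lemma to the ($F$-)elementary map $g \colon V_2 \to U_2$ and the simple open $W \subset V_2$, producing non-empty simple opens $V_2' \subset W$ of $Y$ and $U_2' \subset U_2$ of $X$ on which $g \colon V_2' \to U_2'$ is ($F$-)elementary. Since $V_2' \subset W \subset U_1$, Proposition~\ref{prop:elbc} applied to the cartesian square obtained by base-changing $f \colon V_1 \to U_1$ along the inclusion $V_2' \hookrightarrow U_1$ yields an ($F$-)elementary map $f \colon V \to V_2'$, where $V := V_1 \cap f^{-1}(V_2')$. This $V$ is a simple open of $Z$ (as the simultaneous non-vanishing locus of two invariants), and is non-empty because $f \colon V_1 \to U_1$ is surjective on the elementary restriction, so $f$ is dominant and $f^{-1}(V_2')$ meets the dense open $V_1$.

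To conclude, I will apply Proposition~\ref{prop:elcomp} to the composable pair $V \to V_2' \to U_2'$ of ($F$-)elementary maps. In the $F$-elementary case, the reducedness hypothesis of Proposition~\ref{prop:elcomp} is satisfied because $V_2'$ is open in the reduced scheme $Y$. The composition $g \circ f \colon V \to U_2'$ is then ($F$-)elementary, proving that $g \circ f \colon Z \to X$ is generically ($F$-)elementary. The step requiring the most care is the invocation of the preceding lemma: without it one only knows that $g$ is elementary on $V_2$, which need not sit inside $U_1$, so the two elementary restrictions of $f$ and $g$ would have no common piece over which to compose. Applying the lemma upgrades the situation to elementary maps with matching target and source, after which only Propositions~\ref{prop:elbc} and~\ref{prop:elcomp} are needed.
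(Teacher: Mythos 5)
Your proof is correct and follows essentially the same route as the paper's: both shrink the given simple opens via the preceding lemma, restrict $f$ over the resulting open by base change (Proposition~\ref{prop:elbc}), and conclude with Proposition~\ref{prop:elcomp}. The only cosmetic difference is that you apply the lemma to the elementary restriction $g \colon V_2 \to U_2$ after explicitly forming $U_1 \cap V_2$, whereas the paper applies it directly to $g \colon Y \to X$ with the simple open target of $f$'s elementary restriction, letting the lemma's proof handle the intersection.
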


\begin{proof}
Let $W \subset Z$ and $V \subset Y$ be non-empty simple opens such
that $f$ induces an ($F$-)elementary map $W \to V$. By the previous
lemma, the restriction $g \colon V \to X$ is generically ($F$-)elementary. Let
$V' \subset V$ and $U \subset X$ be non-empty simple opens such that
$g$ induces an ($F$-)elementary map $V' \to U$. Let $W'=f^{-1}(V')
\cap W$, a nonempty simple open in $Z$.
Then $f$ induces an ($F$-)elementary map $W' \to V'$ (Proposition~\ref{prop:elbc}). The composition $W' \to V' \to U$ is ($F$-)elementary (Proposition~\ref{prop:elcomp}), which shows that $g \circ f$ is generically ($F$-)elementary.
\end{proof}

\begin{proposition} \label{prop:genelshift}
Let $f \colon Y \to X$ be a generically ($F$-)elementary map of irreducible affine $\GL$-varieties. Then $\Sh_n(f)$ is also generically ($F$-)elementary, for any $n \ge 0$.
\end{proposition}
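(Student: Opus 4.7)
The plan is to reduce to Proposition~\ref{prop:elshift} by shifting the witnessing simple opens. By hypothesis we can fix nonempty simple opens $V = Y[1/a] \subset Y$ and $U = X[1/b] \subset X$, with $a \in K[Y]^{\GL}$ and $b \in K[X]^{\GL}$, such that $f(V) \subset U$ and the restriction $g := f|_V \colon V \to U$ is ($F$-)elementary. (If needed, replace $V$ by $V \cap f^{-1}(U) = Y[1/(a \cdot f^*b)]$, still a simple open.)

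The key auxiliary observation is that the shift operation takes simple opens to simple opens. The coordinate ring $K[\Sh_n(Y)] = A\{K^n \oplus \bV\}$ agrees with $K[Y] = A\{\bV\}$ as a $K$-algebra (the two functor values are identified via an isomorphism $K^n \oplus \bV \cong \bV$); only the $\GL$-structure changes, as $\GL$ now acts through the subgroup $G(n) \subset \GL$ acting on $Y$. In particular, since $a$ is $\GL$-invariant on $Y$, it is a fortiori $G(n)$-invariant, hence $\GL$-invariant when viewed as a function on $\Sh_n(Y)$. Thus $\Sh_n(Y)[1/a]$ is a simple open in $\Sh_n(Y)$, and by unwinding the definition
\[
  \Sh_n(V)\{W\} = V\{K^n \oplus W\} = Y\{K^n \oplus W\}[1/a] = \Sh_n(Y)\{W\}[1/a]
\]
it coincides with the shift $\Sh_n(V)$ of the open subvariety $V \subset Y$. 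The same applies to $U \subset X$. The identification of coordinate rings also shows that $\Sh_n(Y)$ and $\Sh_n(X)$ are irreducible, since $K[Y]$ and $K[X]$ are domains.

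By functoriality of $\Sh_n$, the morphism $\Sh_n(f)$ sends $\Sh_n(V)$ into $\Sh_n(U)$ and the induced map is $\Sh_n(g)$. Applying Proposition~\ref{prop:elshift} (which covers class (III) in the elementary case and class (IV) in the $F$-elementary case) to $g \colon V \to U$ yields that $\Sh_n(g) \colon \Sh_n(V) \to \Sh_n(U)$ is ($F$-)elementary. Hence $\Sh_n(f)$ restricts to an ($F$-)elementary map between nonempty simple opens of the irreducible $\GL$-varieties $\Sh_n(Y)$ and $\Sh_n(X)$, i.e., $\Sh_n(f)$ is generically ($F$-)elementary.

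The argument is essentially a packaging of Proposition~\ref{prop:elshift}, and the only thing requiring a genuine check is the compatibility of shifting with simple opens; this is the mildest of obstacles, resolved by the observation that shifting the $\GL$-variety merely restricts the acting group from $\GL$ to $G(n)$, so invariants transport directly.
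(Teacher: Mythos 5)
Your proposal is correct and follows exactly the paper's argument: restrict to the witnessing simple opens, note that shifting takes simple opens to simple opens (a $\GL$-invariant function on $Y$ remains invariant for the shifted structure), and apply Proposition~\ref{prop:elshift} to the restricted map. The extra verification that $\Sh_n(V)=\Sh_n(Y)[1/a]$ is a welcome detail the paper leaves implicit, but the route is the same.
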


\begin{proof}
Let $V \subset Y$ and $U \subset X$ be simple open subsets such that $f$
induces an ($F$-)elementary map $f|_V:V \to U$. Then $\Sh_n(V) \subset
\Sh_n(Y)$ and $\Sh_n(U) \subset \Sh_n(x)$ are simple open subsets and
$\Sh_n(f)$ induces the map $\Sh_n(f|_V):\Sh_n(V) \to \Sh_n(U)$
which is ($F$-)elementary by Proposition~\ref{prop:elshift}.
\end{proof}

Any finite dimensional variety can be regarded as a $\GL$-variety with trivial $\GL$ action. We will require the following result in this context. This result perhaps provides some intuition for why $F$-elementary is defined as it is.

\begin{proposition} \label{prop:gensmooth}
Let $f \colon Y \to X$ be a dominant map of irreducible affine varieties. Then $f$ is generically $F$-elementary.
\end{proposition}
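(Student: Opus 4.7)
The plan is to unwind the definitions in the setting of finite-dimensional varieties with trivial $\GL$-action. In that setting, simple opens are simply non-empty principal opens, and no $\bA(R)$-torsor with $R$ an irreducible polynomial representation of positive degree can exist with both base and total space having trivial $\GL$-action: $\GL$-equivariance of the torsor action combined with trivial action on the total space forces every $r \in \bA(R)$ to be $\GL$-fixed, but $\bA(R)^{\GL}$ is just the origin when $R$ has positive degree (there are no $\GL$-invariants in a positive-degree polynomial representation). So purely elementary reduces to ``isomorphism,'' elementary to ``smooth and surjective,'' and $F$-elementary to: admits a factorization through an auxiliary finite-dimensional variety as an $F$-equivalence followed by a smooth surjection. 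Thus I must produce non-empty principal opens $V \subset Y$ and $U \subset X$, plus a finite-dimensional irreducible variety $V'$, with $f|_V = g \circ h$ where $h \colon V \to V'$ is an $F$-equivalence and $g \colon V' \to U$ is smooth and surjective.

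In characteristic zero this follows from generic smoothness, taking $V' = V$ and $h = \id$. In characteristic $p > 0$, I would work at the level of function fields. Choose a transcendence basis $t_1, \ldots, t_d \in K(Y)$ of $K(Y)/K(X)$, set $L := K(X)(t_1, \ldots, t_d) \cong K(X \times \bA^d)$, and let $L_s$ be the separable closure of $L$ inside $K(Y)$. Then $L_s/L$ is finite separable and $K(Y)/L_s$ is finite purely inseparable. Pick any affine model $Y'$ with $K(Y') = L_s$; the tower $L \subset L_s \subset K(Y)$ induces a zig-zag of rational maps $Y \dashrightarrow Y' \dashrightarrow X \times \bA^d \to X$ whose composite is $f$, and on suitable non-empty affine opens this becomes a diagram of honest morphisms $V \xrightarrow{h} V' \xrightarrow{g'} W \subset U \times \bA^d \xrightarrow{\pi} U$.

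The core work is then to shrink $V, V', W, U$ compatibly so that three generic conditions hold at once: (i) $g'$ is étale, which is generic because $L_s/L$ is finite separable; (ii) the coordinate ring extension $K[V'] \hookrightarrow K[V]$ satisfies $K[V]^q \subset K[V']$ for a common power $q$ of $p$---writing $K[V] = K[V'][a_1, \ldots, a_n]$, each $a_i^{q_i}$ lies in $L_s$ by pure inseparability, and after inverting one suitable element of $K[V']$ one arranges $a_i^q \in K[V']$ for a single $q$; and (iii) the composite $g := \pi \circ g' \colon V' \to U$ is surjective, arranged by replacing $U$ with a non-empty open subset of the constructible image of $g$ via Chevalley's theorem. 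With (i)--(iii) in hand, $h$ is an $F$-equivalence (injective by dominance into a reduced target, $F$-surjective by (ii)) and $g$ is smooth (composition of an étale map and a smooth projection) and surjective, so $f|_V = g \circ h$ is $F$-elementary. The only real obstacle is the mild bookkeeping needed to juggle (i)--(iii) on a single coherent family of nested opens, in particular to ensure that after shrinking $V'$ for (iii), the map $h$ is still defined into the new $V'$ and relation (ii) still holds in the new coordinate ring.
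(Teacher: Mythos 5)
Your argument is correct and is essentially the paper's own proof: both split $K(Y)/K(X)$ via a transcendence basis (Noether normalization) into a purely transcendental part handled by the smooth projection, a separable algebraic part handled by generic \'etaleness, and a purely inseparable part handled by an $F$-equivalence, with the surjectivity/shrinking bookkeeping being routine since smooth maps are open and preimages of principal opens are principal. The only cosmetic difference is that the paper first reduces to a finite map and uses the concrete intermediate ring $A'=A B^q$ (with $\Frac(A')$ the separable closure), where you instead choose an abstract affine model of the separable closure $L_s$ and spread out.
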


\begin{proof}
By Noether normalization \stacks{0CBI}, we can factor $f$ as
\begin{displaymath}
Y \to X \times \bA^d \to X,
\end{displaymath}
where the first map is dominant and generically finite, and the second map is the projection. Since the second map is elementary, it suffices to show that the first map is generically $F$-elementary. Thus, relabeling and shrinking, we may assume that $f$ is a finite map.

Write $X=\Spec(A)$ and $Y=\Spec(B)$. Thus $A \to B$ is a module-finite map of integral domains. Let $L=\Frac(A)$ and $M=\Frac(B)$, so that $M/L$ is a finite field extension. Let $L'$ be the separable closure of $L$ in $M$. Thus $M/L'$ is purely inseparable, and so there is some characteristic power $q$ such that $M^q \subset L'$. Let $A'=AB^q$, and put $X'=\Spec(A')$; note that $\Frac(A')=L'$. The map $f$ factors as
\begin{displaymath}
\xymatrix{ Y \ar[r]^{f_1} & X' \ar[r]^{f_2} & X }
\end{displaymath}
Since $B^q \subset A' \subset B$, it follows that $f_1$ is an $F$-equivalence. The map $f_2$ is \'etale over the generic point of $X$ since $L'/K$ is a separable extension, and thus \'etale over some dense open subset of $X'$ \stacks{02GT}. This shows that $f_2$ is generically elementary. The result follows.
\end{proof}

\subsection{Locally elementary maps}

Let $f \colon Y \to X$ be a map of quasi-affine $\GL$-prevarieties. We say
that $f$ is \defn{locally elementary} if there exist $n$ and open affine
$G(n)$-subschemes $U \subset X$ and $V \subset Y$ such that $f$ maps
$V$ into $U$, $f:V \to U$ is $F$-elementary (regarded as a morphism of
$\GL$-prevarieties via the the isomorphism $\GL \to G(n)$), and such that
moreover $X=\GL \cdot U$ and $Y=\GL \cdot V$. We
say $X$ itself is locally elementary if the map $X \to \Spec(K)$ is.

\begin{example}
Let $X$ be the quasi-affine $\GL$-variety of $\infty \times
\infty$-matrices of rank precisely~$1$, equipped with the action $(g,A)
\mapsto gAg^{T}$. Then $X$ is locally elementary. Indeed, the open
subscheme 
$U :=X[1/x_{11}] \subset X$ where the top-left coordinate $x_{11}$ is
nonzero is an open, affine $G(1)$-subscheme and, as a $\GL$-variety via the isomorphism $\GL
\to G(1)$, isomorphic to $\Spec(K[x_{11},x_{11}^{-1}]) \times \bA(\bV
\oplus \bV)$ via the map that remembers only the first row and column
of the matrix. The projection $U \to \Spec(K[x_{11},x_{11}^{-1}])$ is
purely elementary, and the map $\Spec(K[x_{11},x_{11}^{-1}]) \to
\Spec(K)$ is $0$-elementary. Hence $U$ is elementary and 
$X=\GL \cdot U$ is locally elementary.
\end{example}

We will see in \S\ref{ssec:Decomp} that every morphism of
$\GL$-varieties is built up from locally elementary ones. A key
observation in that construction is the following.

\begin{proposition} \label{prop:loceltbasechange}
Locally elementary maps are preserved under base change.
\end{proposition}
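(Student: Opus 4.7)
The plan is to transport the witnessing data for local elementariness along the base change. Suppose $f \colon Y \to X$ is locally elementary, witnessed by an integer $n$ and open affine $G(n)$-subschemes $U \subset X$ and $V \subset Y$ with $f(V) \subset U$, $f|_V \colon V \to U$ an $F$-elementary map, and $X = \GL \cdot U$, $Y = \GL \cdot V$. Given a $\GL$-equivariant base change $g \colon X' \to X$, form $Y' = Y \times_X X'$ with projections $f' \colon Y' \to X'$ and $g_Y \colon Y' \to Y$. I propose $U' := g^{-1}(U) \subset X'$ and $V' := g_Y^{-1}(V) \subset Y'$ as witnessing data for $f'$.

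The first step is to verify the easy conditions. Openness of $U'$ and $V'$ is immediate, and both are $G(n)$-stable: $g$ is $\GL$-equivariant by hypothesis, and $g_Y$ is $\GL$-equivariant as the base change of $g$ along $f$. Because $f(V) \subset U$, we have $V' = V \times_X X' = V \times_U U'$, so $f'|_{V'} \colon V' \to U'$ is precisely the base change of $f|_V \colon V \to U$ along $U' \to U$. Proposition~\ref{prop:elbc} then delivers that $f'|_{V'}$ is $F$-elementary.

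The next step is the covering conditions $X' = \GL \cdot U'$ and $Y' = \GL \cdot V'$. For any point $x' \in X'$, its image $g(x')$ lies in $X = \GL \cdot U$, so there is some $\sigma \in \GL$ (over a sufficiently large extension of $K$ if necessary, cf.\ Proposition~\ref{prop:GLU}) with $\sigma^{-1} g(x') \in U$. By $\GL$-equivariance, this says $g(\sigma^{-1} x') \in U$, i.e.\ $\sigma^{-1} x' \in U'$, so $x' \in \sigma U' \subset \GL \cdot U'$. The argument for $Y'$ is word-for-word identical using $g_Y$.

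The main obstacle — the only genuinely delicate point — is affineness of $U'$ and $V'$. The cleanest way to handle this is to verify that one can always choose the witnessing data for local elementariness so that $U$ is a simple $G(n)$-open, i.e.\ of the form $X[1/h]$ for a $G(n)$-invariant function $h \in K[X]$ (and similarly for $V$, replacing $h$ by its pullback times a $G(n)$-invariant function on $Y$). Simple opens are affine, and their preimages under any morphism of quasi-affine schemes are again simple opens (with defining function $g^*h$), hence affine. Alternatively, one may observe that under these hypotheses the open immersion $U \hookrightarrow X$ is an affine morphism, and affine morphisms are stable under base change, which exhibits $U' = U \times_X X'$ as an affine open of $X'$. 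Either route reduces everything to the three steps above, completing the proof that $f'$ is locally elementary.
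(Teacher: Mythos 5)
Your core argument is the one the paper uses: pull the witnessing data back ($U'=g^{-1}(U)$, $V'=g_Y^{-1}(V)$), observe that $f'|_{V'}\colon V'\to U'$ is the base change of $f|_V\colon V\to U$ and hence $F$-elementary by Proposition~\ref{prop:elbc}, and obtain the covering conditions from the identity $\GL\cdot g^{-1}(U)=g^{-1}(\GL\cdot U)=X'$ (the paper states this identity directly; your pointwise version with a translate $\sigma$ is the same thing). So in structure the two proofs coincide.

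The divergence is your treatment of affineness of $U'$ and $V'$, a point the paper's proof passes over without comment. Of your two suggested justifications, the second does not work as stated: base change shows only that $U'\to X'$ is an \emph{affine morphism}, and since $X'$ is merely quasi-affine this does not make the scheme $U'$ affine (the identity map of a non-affine quasi-affine scheme is an affine morphism), so the phrase ``which exhibits $U'$ as an affine open of $X'$'' is a non sequitur. The first route is the right idea---if $U=X[1/h]$ for a $G(n)$-invariant $h$, then $U'=X'[1/g^*h]$ is again a simple open, hence affine, and likewise for $V'$---but your assertion that the witnessing data ``can always be chosen'' with $U$ a simple $G(n)$-open is stated without proof, and it is not automatic: shrinking $U$ to a simple open inside it may destroy the condition $X=\GL\cdot U$, so one either needs an actual argument here or should note that all locally elementary maps produced in the paper (e.g.\ in Corollary~\ref{cor:locelt} and the proof of Theorem~\ref{thm:decomp}) come equipped with simple-open witnesses, to which the base-change argument then applies verbatim. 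Apart from this point---which, to be fair, the paper itself leaves implicit---your proof matches the paper's.
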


\begin{proof}
Let $f \colon Y \to X$ be a locally elementary map of quasi-affine
$\GL$-prevarieties, and let $h:Z \to X$ be an arbitrary map of
quasi-affine $\GL$-prevarieties. Pick $n$ and affine, open
$G(n)$-subschemes $V \subset Y$ and $U \subset X$ such that $f$ maps $V$
into $U$, the restriction $f:V \to U$ is $F$-elementary (regarded
as a map of $\GL$-varieties via the isomorphism $\GL \to G(n)$), and
$X=\GL \cdot U$ and $Y=\GL \cdot V$. Then
in the following diagram all arrows marked $\subset$ are open
immersions, and the outer rectangle is cartesian:
\[ 
\xymatrix{
V \times_U h^{-1}(U) \ar[r] \ar[d]_{\subset} \ar@/_8ex/[ddd] & 
V \ar[d]^{\subset} \ar@/^8ex/[ddd] \\
Y \times_X Z \ar[r] \ar[d] & Y \ar[d]^f \\
Z \ar[r]_h & X\\
h^{-1}(U) \ar[u]^{\subset} \ar[r] & 
U. \ar[u]_{\subset} 
}
\]
The left-most arrow is $F$-elementary by
Proposition~\ref{prop:elbc}. Furthermore, we have 
\[ \GL \cdot (h^{-1}(U)) = h^{-1}(\GL \cdot U) = h^{-1}(X)=Z. \]
Similarly, one shows that $\GL \cdot (V \times_U h^{-1}(U))=Y \times_X
Z$. 
\end{proof}

\section{The main structure theorems} \label{s:shift}

In this section, we prove our main structural results about $\GL$-varieties: the shift theorem (Theorem~ \ref{thm:shift}), unirationality theorem (Theorem~\ref{thm:uni}), and decomposition theorem (Theorem~\ref{thm:decomp}). We give two applications of these results: a version of Chevalley's theorem (Theorem~\ref{thm:chevalley}), and a result about lifting points along maps of $\GL$-varieties (Theorem~\ref{thm:lift}). The latter is the key result needed in our application to strength.

\subsection{Finite type maps}

We say that a map $A \to B$ of $\GL$-algebras is of \defn{invariant finite presentation} if
\begin{displaymath}
B \cong A[x_1, \ldots, x_n]/(f_1, \ldots, f_m)
\end{displaymath}
where the elements $x_1, \ldots, x_n$ and $f_1, \ldots, f_m$ are $\GL$-invariant. Equivalently, the natural map $B_0 \otimes_{A_0} A \to B$ is an isomorphism. We say that a map $Y \to X$ of affine $\GL$-schemes is of \defn{invariant finite presentation} if the map on coordinate rings is; equivalently, the natural map $Y \to Y_0 \times_{X_0} X$ is an isomorphism. Note that a 0-elementary map has this property. Maps of invariant finite presentation are discussed in \cite[\S 3.3]{imgclosure} in characteristic~0, but much of that discussion applies more generally. In particular, we will require the following result proven there:

\begin{proposition} \label{prop:fintype}
Let $f \colon Y \to X$ be a map of affine $\GL$-varieties that, as a map of schemes, is of finite type. Then there exists $n \ge 0$ and a non-zero $\GL$-invariant function $h$ on $\Sh_n(X)$ such that $\Sh_n(Y)[1/h] \to \Sh_n(X)[1/h]$ is of invariant finite presentation.
\end{proposition}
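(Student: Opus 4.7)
Write $X = \Spec(A)$ and $Y = \Spec(B)$. Since $f$ is of finite type, $B$ is finitely generated over $A$; pick generators $y_1, \ldots, y_k \in B$, and choose $n$ large enough that each $y_i$ lies in $B\{K^n\}$. In the shifted algebra $\Sh_n(B)$, the subspace $B\{K^n\}$ is the degree-zero part, hence consists of $\GL$-invariants (in the shifted action). Replacing $f$ by $\Sh_n(f)$, we may therefore assume the $y_i$ are $\GL$-invariant and so obtain a surjection $\varphi \colon A[y_1, \ldots, y_k] \twoheadrightarrow B$ of $\GL$-algebras whose $\GL$-stable kernel $I$ is radical (as $B$ is reduced).

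Now apply Theorem~\ref{thm:noeth} to $\Spec(A[y_1, \ldots, y_k])$: the radical $\GL$-stable ideal $I$ is generated as such by a finite-length polynomial subrepresentation $M \subset I$, so $I = \sqrt{(\GL \cdot M)}$. Shifting once more by $n'$ large enough that $M$ lies in the new degree-zero part, a basis of $M$ consists of $\GL$-invariants $f_1, \ldots, f_\ell$, and then $I = \sqrt{(f_1, \ldots, f_\ell)}$ as an ideal. Set $T := A[y_1, \ldots, y_k]/(f_1, \ldots, f_\ell)$; since the generators $y_i$ and the relations $f_j$ are invariant, $T$ is of invariant finite presentation over $X$, and the surjection $T \twoheadrightarrow B$ has nilpotent kernel equal to the nilradical of $T$. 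It thus suffices to find a non-zero $\GL$-invariant $h \in A\{0\}$ such that $T[1/h]$ is reduced.

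For this, note that at each finite level $\Spec(T\{K^s\})$ is a finite-type $K$-scheme whose reduction equals $\Spec(B\{K^s\})$; by standard commutative algebra the reduced locus is a non-empty Zariski open subset, so $\Spec(T\{K^s\})$ is generically reduced over $\Spec(A\{K^s\})$. Because $T$ is of invariant finite presentation over $X$, one has $\Spec(T) = \Spec(T_0) \times_{\Spec(A_0)} X$, so this generic reducedness at the base $\Spec(A_0)$ propagates to all of $X$: the image in $X$ of the non-reduced locus of $\Spec(T)$ lies in a proper $\GL$-stable closed subset, whose defining ideal in $A$ is a non-zero $\GL$-stable ideal. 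After a final shift, this ideal contains a non-zero $\GL$-invariant element $h \in A\{0\}$; inverting $h$ makes $T[1/h]$ reduced, hence $T[1/h] = B[1/h]$, yielding the desired invariant finite presentation.

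The main obstacle is the final localization step: extracting a single $\GL$-invariant $h$ whose inversion makes $T[1/h]$ reduced uniformly across all polynomial degrees. The delicate point is that the non-reduced locus of $T$ could a priori dominate $X$; the combination of generic reducedness of the finite-type fiber $\Spec(T_0) \to \Spec(A_0)$, the factorization $\Spec(T) = \Spec(T_0) \times_{\Spec(A_0)} X$, and a last application of Theorem~\ref{thm:noeth} to produce an invariant in the resulting $\GL$-stable ideal of $A$ is what makes this last step go through.
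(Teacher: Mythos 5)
Your plan breaks down at its central step. You cannot arrange, by shifting alone, that the finite-length subrepresentation $M\subset I$ "lies in the degree-zero part": shifting preserves the top-degree part of a polynomial functor ($(\Sh_U P)_d\cong P_d$), so a positive-degree $M$ never becomes a trivial representation after any shift, and in general $M$ is not even finite dimensional, so it has no finite basis of invariants. What is true is that finitely many chosen elements $m_1,\dots,m_r$ generating $M$ become $\GL$-invariant after a suitable shift; but then the ideal $(m_1,\dots,m_r)$ they generate by themselves, without their $\GL$-orbits, is much smaller than $(\GL\cdot M)$, and its radical need not be $I$. Concretely, take $X=\bA(\bV)=\Spec(\Sym(\bV))$ with coordinates $x_1,x_2,\dots$, and let $Y\subset X\times\bA^1$ (extra invariant coordinate $t$) be the reduced subscheme with ideal $I=(tx_1,tx_2,\dots)=(t)\cap(x_1,x_2,\dots)$, so that $Y=\{t=0\}\cup\{x=0\}$ and $Y\to X$ is of finite type. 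For any shift $\Sh_n$, the invariant elements of $I$ lie in $t\cdot(x_1,\dots,x_n)$, and any finite collection of them vanishes on all of $\{x_1=\dots=x_n=0\}$, which is not contained in $Y$; hence $I$ is never the radical of an ideal generated by finitely many invariants, for any $n$, until one inverts an invariant function such as $h=x_1$ (after which $I[1/h]=(t)$). So the localization by $h$ is needed precisely in order to obtain invariant generators at all; in your plan $h$ only enters afterwards, to repair non-reducedness, and the intermediate algebra $T$ your argument relies on is never actually produced.

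A secondary problem: even granting invariants $f_1,\dots,f_\ell$ with $I=\sqrt{(f_1,\dots,f_\ell)}$, nothing in your construction prevents $T=A[y_1,\dots,y_k]/(f_1,\dots,f_\ell)$ from being non-reduced everywhere over $X$ (the chosen invariant relations could, for instance, all be proper powers), and reducedness does not propagate along the base change $\Spec(T_0)\times_{\Spec(A_0)}X$ -- in positive characteristic even a field extension can destroy reducedness, as noted in \S\ref{ss:glvar}. Note also that the paper does not prove the proposition by this route: it invokes \cite[Proposition~3.6]{imgclosure}, and any correct argument has to intertwine the shift with the inversion of an invariant function in order to produce the invariant presentation itself, not merely to clean up nilpotents at the end.
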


\begin{proof}
See \cite[Proposition~3.6]{imgclosure}; the proof given there applies in the present setting.
\end{proof}

The following proposition is a key special case of the shift theorem, and is used in the proof of that theorem.

\begin{proposition} \label{prop:ft}
Let $f \colon Y \to X$ be a map of irreducible affine $\GL$-varieties that is dominant and of finite type. Then $\Sh_n(Y) \to \Sh_n(X)$ is generically $F$-elementary for $n \gg 0$.
\end{proposition}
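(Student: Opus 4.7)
The plan is to reduce, via Proposition~\ref{prop:fintype}, to a map of invariant finite presentation; such a map is then the base change of a map between its finite-dimensional ``degree-$0$'' slices, to which I can apply Proposition~\ref{prop:gensmooth}. A final appeal to Proposition~\ref{prop:genelshift} will promote the conclusion from a single $n$ to all sufficiently large $n$.

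More concretely, by Proposition~\ref{prop:fintype} we may fix some $n_0 \ge 0$ and a nonzero $\GL$-invariant $h$ on $\Sh_{n_0}(X)$ such that $\tilde{f}\colon \tilde{Y}:=\Sh_{n_0}(Y)[1/h]\to\tilde{X}:=\Sh_{n_0}(X)[1/h]$ is of invariant finite presentation; equivalently, $\tilde{Y}\cong\tilde{Y}_0\times_{\tilde{X}_0}\tilde{X}$, where $\tilde{Y}_0=Y\{K^{n_0}\}[1/h]$ and $\tilde{X}_0=X\{K^{n_0}\}[1/h]$. Both $\tilde{Y}_0$ and $\tilde{X}_0$ are irreducible affine varieties of finite type over $K$, since their coordinate rings are localizations of subalgebras of the domains $K[Y]$ and $K[X]$. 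Moreover, the dominance of $f$ (i.e.\ the injectivity of $K[X]\to K[Y]$ on every homogeneous piece of the underlying polynomial functor) passes to the shift and to degree-$0$ parts, so the induced map $\tilde{f}_0\colon\tilde{Y}_0\to\tilde{X}_0$ is dominant between irreducible affine varieties.

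Now Proposition~\ref{prop:gensmooth} provides simple opens $V_0\subset\tilde{Y}_0$ and $U_0\subset\tilde{X}_0$ such that $V_0\to U_0$ is $F$-elementary (where $V_0,U_0$ carry the trivial $\GL$-action). Set $U:=\tilde{X}\times_{\tilde{X}_0}U_0$ and $V:=\tilde{Y}\times_{\tilde{Y}_0}V_0$. The defining functions of $U_0$ and $V_0$ lie in the $\GL$-invariant (i.e.\ degree-$0$) parts of $K[\tilde{X}]$ and $K[\tilde{Y}]$, so multiplied by $h$ they give $\GL$-invariant functions on $\Sh_{n_0}(X)$ and $\Sh_{n_0}(Y)$; hence $U$ and $V$ are simple opens of these shifts. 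Using $\tilde{Y}\cong\tilde{Y}_0\times_{\tilde{X}_0}\tilde{X}$ together with $V_0\subset\tilde{f}_0^{-1}(U_0)$, one obtains $V\cong V_0\times_{U_0}U$, so $V\to U$ is the base change of $V_0\to U_0$ and is therefore $F$-elementary by Proposition~\ref{prop:elbc}. Consequently $\Sh_{n_0}(f)$ is generically $F$-elementary, and then Proposition~\ref{prop:genelshift} gives the same conclusion for every $n\ge n_0$.

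The only step with real content is the reduction to finite-dimensional geometry via Proposition~\ref{prop:fintype}; the remaining difficulty is bookkeeping, and the principal point to verify is that pull-backs of simple opens from $\tilde{X}_0$ and $\tilde{Y}_0$ remain simple opens of the full shifts $\Sh_{n_0}(X)$ and $\Sh_{n_0}(Y)$, which comes down to identifying $\GL$-invariants with degree-$0$ parts.
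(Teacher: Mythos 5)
Your proof is correct and follows essentially the same route as the paper: reduce via Proposition~\ref{prop:fintype} to a map of invariant finite presentation, apply Proposition~\ref{prop:gensmooth} to the degree-zero map, pull the resulting $F$-elementary map back along the cartesian square using Proposition~\ref{prop:elbc}, and finish with Proposition~\ref{prop:genelshift}. The extra bookkeeping you include (dominance and irreducibility of the degree-zero slices, simple opens pulling back to simple opens of the shift) is exactly what the paper leaves implicit.
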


\begin{proof}
First suppose that $f$ is of invariant finite presentation. The map $Y_0 \to X_0$ is generically $F$-elementary (Proposition~\ref{prop:gensmooth}). Let $V_0 \subset Y_0$ and $U_0 \subset X_0$ be non-empty open affine sets such that $V_0 \to U_0$ is $F$-elementary, and let $V=V_0 \times_{Y_0} Y$ and $U=U_0 \times_{X_0} X$. Since $f$ is of invariant finite presentation, the square
\begin{displaymath}
\xymatrix{
V \ar[r] \ar[d] & V_0 \ar[d] \\
U \ar[r] & U_0 }
\end{displaymath}
is cartesian. Since $V_0 \to U_0$ is $F$-elementary, so is $V \to U$ (Proposition~\ref{prop:elbc}). Thus $f$ is generically $F$-elementary, as required.

We now treat the general case. By Proposition~\ref{prop:fintype}, there are $n$ and $h$ such that $\Sh_n(Y)[1/h] \to \Sh_n(X)[1/h]$ is of invariant finite presentation. By the previous paragraph, this map is therefore generically $F$-elementary. This clearly implies that $\Sh_n(Y) \to \Sh_n(X)$ is generically $F$-elementary as well. Any further shift is still generically $F$-elementary by Proposition~\ref{prop:genelshift}.
\end{proof}

\subsection{The shift theorem}

The following is our first important structural result about $\GL$-varieties. It adheres to the general theme in representation stability that objects can be made simpler by shifting.

\begin{theorem} \label{thm:shift}
Let $f \colon Y \to X$ be a dominant map of irreducible affine $\GL$-varieties. Then $\Sh_n(f) \colon \Sh_n(Y) \to \Sh_n(X)$ is generically $F$-elementary for $n \gg 0$.
\end{theorem}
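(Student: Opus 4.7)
The plan is to induct on the finite-length polynomial functor $P$ (using the well-founded partial order described in \S\ref{ss:poly}), where $P$ is chosen so that $Y$ embeds as a closed $\GL$-subvariety of $\bA(P) \times X$; such a $P$ exists by the definition of a $\GL$-prevariety. The easy extreme cases are $P=0$ (where $Y=X$ and $f=\id$ is $0$-elementary) and $P$ purely of degree~$0$ (where $f$ is of finite type as a scheme morphism, so Propositions~\ref{prop:fintype} and~\ref{prop:ft} apply directly after a shift).

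For the inductive step, I assume $P$ has top degree $d>0$ and pick an irreducible quotient $R$ of $P$ of degree $d$. This gives a short exact sequence $0 \to P' \to P \to R \to 0$ with $(P')_d \subsetneq P_d$, and the projection $\bA(P)\times X \to \bA(P')\times X$ becomes an $\bA(R)$-torsor. Let $\tilde X$ be the reduced scheme-theoretic image of $Y$ in $\bA(P') \times X$, and let $T := \bA(P) \times_{\bA(P')} \tilde X$, so that $T \to \tilde X$ is an $\bA(R)$-torsor containing $Y$ as a closed $\GL$-subvariety. I then split on whether $Y$ is cylindrical inside $T$.

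If $Y=T$, then $Y \to \tilde X$ is a pure torsor, hence $\Sh_n(Y) \to \Sh_n(\tilde X)$ is elementary by Proposition~\ref{prop:elshift}. The inductive hypothesis applies to $\tilde X \to X$ (since $P' < P$), and Proposition~\ref{prop:genelcomp} closes this case. If $Y \subsetneq T$, I apply the embedding theorem (Theorem~\ref{thm:embed}) to obtain a finite-dimensional $U$ and a function $h \in K[T\{U\}]$ not vanishing on $Y\{U\}$ such that $\Sh_U(Y)[1/h] \to Y'[1/h]$ is a closed $F$-immersion, where $Y' := \Sh_U(T)/\bA(R)$ is an $\bA(\Sh^{<d}_U(R))$-torsor over $\Sh_U(\tilde X)$. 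Let $Z''$ be the reduced $F$-surjective image of $\Sh_U(Y)[1/h]$ in $Y'[1/h]$, so that $\Sh_U(Y)[1/h] \to Z''$ is an $F$-equivalence. Then $Z''$ embeds into some $\bA(\tilde P)\times \Sh_U(X)$ with $\tilde P$ built from $\Sh_U(P')$ and $\Sh^{<d}_U(R)$; crucially, its top-degree part satisfies $\tilde P_d = (P')_d \subsetneq P_d$, so $\tilde P$ is strictly smaller than $P$ in the partial order. The inductive hypothesis applied to $Z'' \to \Sh_U(X)$ yields $m$ such that $\Sh_m(Z'') \to \Sh_{m+U}(X)$ is generically $F$-elementary. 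Combining this with the $F$-equivalence $\Sh_{m+U}(Y)[1/h] \to \Sh_m(Z'')$ (shifts preserve $F$-equivalences) via Proposition~\ref{prop:genelcomp}, and noting that $\Sh_{m+U}(Y)[1/h]$ is a nonempty simple open of $\Sh_{m+U}(Y)$, shows that $\Sh_{m+U}(Y) \to \Sh_{m+U}(X)$ is generically $F$-elementary.

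The hardest step will be the non-cylindrical case, which relies on the embedding theorem and on choosing the smaller polynomial functor $\tilde P$ so that the induction closes. A subtle technical point is that in positive characteristic the $\bA(\Sh^{<d}_U(R))$-torsor $Y'$ need not be globally trivial over $\Sh_U(\tilde X)$, so producing a $\GL$-equivariant closed embedding $Z'' \hookrightarrow \bA(\tilde P)\times \Sh_U(X)$ requires lifting generators through a (possibly non-split) filtration on $K[Y']$; carrying this out without enlarging $\tilde P$ past $P$ in the partial order will require care, and is ultimately what makes the shift in the statement indispensable.
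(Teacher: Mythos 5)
Your proposal follows essentially the same route as the paper's proof: induction on $P$ in the partial order, base case via Propositions~\ref{prop:fintype} and~\ref{prop:ft}, and a cylindrical/non-cylindrical dichotomy with the embedding theorem (Theorem~\ref{thm:embed}) in the second case; your dichotomy $Y=T$ versus $Y\subsetneq T$ is equivalent to the paper's, precisely because $\tilde X$ is the image closure of $Y$. A few small points. First, the ``subtle technical point'' you flag at the end is not actually an obstacle: since $T$ is pulled back from $\bA(P)\times X\to\bA(P')\times X$, one has canonically $\Sh_U(\bA(P)\times X)/\bA(R)=\bA(Q')\times\Sh_U(X)$ with $Q'=\ker(\Sh_U(P)\to R)$, and hence $Y'=\Sh_U(T)/\bA(R)$ is automatically a closed $\GL$-subscheme of $\bA(Q')\times\Sh_U(X)$; no trivialization or lifting of generators through a non-split filtration is needed, and $Q'_d\cong P'_d\subsetneq P_d$ gives exactly the strict drop in the partial order that the paper uses. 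Second, your $Z''$ is closed only in $Y'[1/h]$, hence merely locally closed in $\bA(Q')\times\Sh_U(X)$, so the inductive hypothesis as you formulated it does not literally apply; the standard fix, which is what the paper does, is to take the image closure of $\Sh_U(Y)$ in $\bA(Q')\times\Sh_U(X)$, observe that the map from $\Sh_U(Y)$ to this closure is an $F$-equivalence after inverting $h$ (hence generically $F$-elementary), and then conclude by Propositions~\ref{prop:genelcomp} and~\ref{prop:genelshift}. Finally, in the cylindrical case your citation of Proposition~\ref{prop:elshift} is slightly off, since that proposition excludes class (I); what you need is the observation in its proof that the shift of an $\bA(P)$-torsor is elementary (a composition of purely elementary and $0$-elementary maps). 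With these routine adjustments your argument coincides with the paper's proof.
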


\begin{proof}
Consider the following statement, for a finite length polynomial representation $P$:
\begin{itemize}
\item[$S(P)$:] Given an irreducible affine $\GL$-variety $X$ and an
irreducible closed $\GL$-subvariety $Y$ of $X \times \bA(P)$ such that the projection $Y \to X$ is dominant, the map $\Sh_n(Y) \to \Sh_n(X)$ is generically $F$-elementary for $n \gg 0$.
\end{itemize}
If $f:Y \to X$ is any morphism of $\GL$-varieties, then $Y$ embeds into $X \times \bA(P)$ for
some $P$ in such a manner that $f$ is the restriction to $Y$ of the
projection to $X$, and so to prove the theorem it suffices to prove
$S(P)$ for all $P$. We proceed by induction on $P$ relative to the
partial order introduced in \S\ref{ss:poly}. When $P$ has degree~0 (i.e., it is a finite dimensional trivial representation of $\GL$) then $S(P)$ holds by Proposition~\ref{prop:ft}.

Now suppose that $P$ has positive degree $d$ and $S(P')$ holds for all $P'$ smaller than $P$. Let $X$ and $Y$ as in $S(P)$ be given. Choose a subrepresentation $Q$ of $P$ such that $R=P/Q$ is simple of degree $d$. Thus $X \times \bA(P)$ is an $\bA(R)$-torsor over $X \times \bA(Q)$. We consider two cases.

\textit{Case 1: $Y$ is cylindrical.} This means that there is a closed
$\GL$-subvariety $Y'$ of $X \times \bA(Q)$ such that $Y$ is the
inverse image of $Y'$. In particular, $Y \to Y'$ is an $\bA(R)$-torsor, and thus a purely elementary map. Since $S(Q)$ holds by induction, the map $\Sh_n(Y') \to \Sh_n(X)$ is generically $F$-elementary for $n \gg 0$. It follows that $\Sh_n(Y) \to \Sh_n(X)$ is as well (using Propositions~\ref{prop:genelcomp} and~\ref{prop:genelshift}).

\textit{Case 2: $Y$ is non-cylindrical.} We apply the embedding theorem (Theorem~\ref{thm:embed}). This states that the map
\begin{displaymath}
\Sh_n(Y)[1/h] \to (\Sh_n(X \times \bA(P))/\bA(R))[1/h]
\end{displaymath}
is a closed $F$-embedding for appropriate $h$ and $n$. We have
\begin{displaymath}
\Sh_n(X \times \bA(P))/\bA(R) = \Sh_n(X) \times \bA(Q')
\end{displaymath}
where $Q' \subset \Sh_n(P)$ is a subrepresentation with quotient $R$. Let $Y'$ be the image closure of $\Sh_n(Y)$ in $\Sh_n(X) \times \bA(Q')$. The map $\Sh_n(Y) \to Y'$ is an $F$-equivalence after inverting $h$, and thus is generically $F$-elementary. Since $S(Q')$ holds by induction, the map $\Sh_m(Y') \to \Sh_{n+m}(X)$ is generically $F$-elementary for $m \gg 0$. As in the previous case, we find that $\Sh_{n+m}(Y) \to \Sh_{n+m}(X)$ is generically $F$-elementary for $m \gg 0$.
\end{proof}

\begin{corollary} \label{cor:xelt}
Let $X$ be an irreducible affine $\GL$-variety. Then $\Sh_n(X)$ contains a non-empty open affine $F$-elementary $\GL$-variety for some $n \ge 0$.
\end{corollary}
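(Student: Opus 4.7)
The plan is to apply the shift theorem (Theorem~\ref{thm:shift}) to the structure map $f\colon X\to\Spec(K)$. Since $X$ is irreducible and the target is a single point, the map $f$ is trivially dominant, so the shift theorem applies and yields that $\Sh_n(f)\colon \Sh_n(X)\to\Sh_n(\Spec(K))=\Spec(K)$ is generically $F$-elementary for some $n\gg 0$.

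By definition, being generically $F$-elementary means that there exist non-empty simple open subsets $V\subset\Sh_n(X)$ and $U\subset\Spec(K)$ such that the restriction $V\to U$ is $F$-elementary. The only non-empty open subset of $\Spec(K)$ is $\Spec(K)$ itself, so $V\to\Spec(K)$ is $F$-elementary, which is precisely the statement that $V$ is an $F$-elementary $\GL$-variety. Since $V$ is a simple open of the affine $\GL$-variety $\Sh_n(X)$, it is itself affine, and it is non-empty by construction.

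I expect no serious obstacle: the corollary is essentially a direct specialization of Theorem~\ref{thm:shift} to a map with trivial target, combined with the definition of ``generically $F$-elementary'' and the observation that simple opens in affine $\GL$-varieties are again affine $\GL$-varieties. The only minor thing to check is that the degenerate case $n=0$ is covered, but Theorem~\ref{thm:shift} already allows $n\gg 0$, which is exactly what is claimed in the corollary.
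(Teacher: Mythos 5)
Your proposal is correct and is exactly the intended argument: the paper gives no separate proof for this corollary precisely because it is the immediate specialization of Theorem~\ref{thm:shift} to the dominant map $X \to \Spec(K)$, using that the only non-empty simple open of $\Spec(K)$ is $\Spec(K)$ itself and that a simple open of the affine $\GL$-variety $\Sh_n(X)$ is a non-empty open affine $\GL$-subvariety. Nothing further is needed.
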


\begin{corollary}
Let $X$ be an irreducible affine $\GL$-variety. Then there exists a polynomial $\delta_X \in \bQ[t]$ such that $\dim X\{K^n\} = \delta_X(n)$ for all $n \gg 0$.
\end{corollary}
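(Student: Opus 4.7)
The plan is to reduce to the $F$-elementary case via Corollary~\ref{cor:xelt} and carry out an explicit dimension count. Fix $m \ge 0$ and a non-empty affine $F$-elementary open subvariety $U \subset \Sh_m(X)$ as supplied by Corollary~\ref{cor:xelt}; since $\Sh_m(X)\{K^{n-m}\} \cong X\{K^n\}$ canonically for $n \ge m$, it suffices to compute $\dim \Sh_m(X)\{K^{n-m}\}$. By the definition of $F$-elementary there is an $F$-equivalence $U \to U'$ with $U'$ elementary, meaning $U'_0$ is a smooth finite-dimensional variety over $K$ and the map $U' \to U'_0$ factors as a tower
\[
U'_0 \leftarrow Z_1 \leftarrow \cdots \leftarrow Z_k = U',
\]
where each $Z_i \to Z_{i-1}$ is an $\bA(P_i)$-torsor for some irreducible polynomial functor $P_i$ of positive degree.

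Because vector-group torsors are Zariski-locally trivial with affine-space fibers, evaluating the tower at any finite-dimensional $V$ produces a tower of Zariski-locally trivial affine-space torsors of ordinary schemes, whence
\[
\dim U'\{V\} = \dim U'_0 + \sum_{i=1}^k \dim P_i\{V\}.
\]
For any finite-length polynomial functor $P$ of degree $\le d$, the $T_n$-character of $P\{K^n\}$ is the restriction of a fixed stable symmetric function $\chi_P$ in $x_1, x_2, \ldots$ (polynomial functors have stable characters), and $\dim P\{K^n\} = \chi_P(1,\ldots,1,0,0,\ldots)$ with $n$ ones; writing $\chi_P$ as a polynomial in the elementary symmetric functions $e_1,\ldots,e_d$ and using $e_j(\underbrace{1,\ldots,1}_{n},0,0,\ldots) = \binom{n}{j}$ exhibits this as a polynomial in $n$ of degree $\le d$ for $n$ large. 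Hence $\dim U'\{K^{n-m}\}$ is polynomial in $n$ for $n \gg 0$.

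It remains to transfer this to $\dim X\{K^n\}$. For the $F$-equivalence $K[U'] \to K[U]$: Proposition~\ref{prop:frob-factor} yields a factorization of the Frobenius on $K[U]$ as $K[U]^{(q)} \to K[U'] \to K[U]$, and evaluating at $V$ gives inclusions $K[U]\{V\}^q \subset K[U']\{V\} \subset K[U]\{V\}$; the outer inclusion is the Frobenius of $K[U]\{V\}$, a universal homeomorphism, so $\dim U\{V\} = \dim U'\{V\}$. Next, $X$ irreducible means its coordinate ring $A$ is a $\GL$-domain, and since the natural map $A\{K^n\} \to A\{\bV\}$ is injective (polynomial functors preserve injections), $X\{K^n\}$ is irreducible for every $n$. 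Finally, the simple open $U = \Sh_m(X)[1/h]$ (which the proof of Corollary~\ref{cor:xelt} produces) has defining invariant $h$ lying in the coordinate ring of $\Sh_m(X)$ evaluated at some $K^{n_0}$, and $h$ remains nonzero under $\Sh_m(X)\{K^{n-m}\} \cong X\{K^n\}$ for $n$ large, so $U\{K^{n-m}\}$ is a non-empty open in the irreducible $X\{K^n\}$ and the two have equal dimension. Assembling, $\dim X\{K^n\}$ agrees with the polynomial $\dim U'_0 + \sum_i \dim P_i\{K^{n-m}\}$ for $n \gg 0$. The main obstacle is the first step: ensuring the $F$-equivalence preserves dimension on each evaluation $\{V\}$, since $F$-surjectivity need not pass to evaluated rings on the nose; this is handled cleanly by the Frobenius-factorization trick of Proposition~\ref{prop:frob-factor}.
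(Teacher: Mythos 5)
Your proposal is correct and follows essentially the same route as the paper's proof: reduce via Corollary~\ref{cor:xelt} to an $F$-elementary simple open in a shift, note that the $F$-equivalence does not change dimensions at each finite level, and add $\dim$ of the smooth base to $\sum_i \dim P_i\{K^n\}$ over the torsor tower, using that each such dimension is eventually polynomial in $n$. You supply more detail than the paper does at several points (the Frobenius-factorization argument for dimension invariance, the character argument for polynomiality, and the irreducibility of $X\{K^n\}$ to pass from the open subset back to $X$), but the underlying argument is the same.
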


\begin{proof}
There exists $m$ and a nonempty open affine $F$-elementary $\GL$-variety
$X' \subset \Sh_m(X)$. So $X' \to \Spec(K)$ factors as $f_2 \circ f_1$
where $f_1$ is an $F$-equivalence and $f_2$ is elementary. The source
and target of $f_1$ (evaluated at $K^n$) have the same dimension
and $f_2=f_4 \circ f_3$ where $f_3$ is purely elementary and $f_4$ is
$0$-elementary. Now $f_4$ is a morphism from some $N$-dimensional affine
variety to $\Spec(K)$ and $f_3$ is a finite composition of torsors with
fibres of the form $\bA(R_i)$ for polynomial representations $R_i$. Hence
\[ \dim(X\{K^{m+n}\})=\dim(X'\{K^n\})=N+\sum_i \dim(R_i(K^n)). \]
The corollary now follows from the fact that each $\dim(R_i\{K^n\})$ is a
polynomial in $n$.
\end{proof}

\begin{corollary} \label{cor:locelt}
Let $f: Y \to X$ be a dominant map of irreducible affine
$\GL$-varieties. Then there exist nonempty, open 
$\GL$-subvarieties $V \subset Y$ and $U \subset X$ such that $f$ induces
a locally $F$-elementary map $V \to U$.
\end{corollary}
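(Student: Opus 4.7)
The plan is to invoke the shift theorem (Theorem~\ref{thm:shift}) and then pass to $\GL$-saturations. By the shift theorem, for all sufficiently large $n$ the map $\Sh_n(f) \colon \Sh_n(Y) \to \Sh_n(X)$ is generically $F$-elementary. Unwinding that definition, this produces nonempty simple opens $V_0 \subset \Sh_n(Y)$ and $U_0 \subset \Sh_n(X)$ and an $F$-elementary restriction $V_0 \to U_0$ (regarded as a morphism of $\GL$-varieties via the canonical isomorphism $\GL \to G(n)$, as in the definition of locally elementary).

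Next, I would translate these data back to subsets of $Y$ and $X$. The operation $\Sh_n$ only changes the $\GL$-action---replacing it by the restriction along $\GL \xrightarrow{\sim} G(n) \hookrightarrow \GL$---so the underlying schemes of $\Sh_n(Y)$ and $\Sh_n(X)$, and the underlying morphism $\Sh_n(f)$, coincide with $Y$, $X$, and $f$ themselves. A simple open of $\Sh_n(X)$ is by definition of the form $X[1/h]$ where $h$ is $\GL$-invariant on $\Sh_n(X)$, i.e.\ $G(n)$-invariant on $X$; hence $U_0$ is an affine open $G(n)$-subscheme of $X$, and similarly $V_0$ is one of $Y$. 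By construction $f(V_0) \subset U_0$ and the induced map is $F$-elementary.

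Finally, set $U := \GL \cdot U_0 \subset X$ and $V := \GL \cdot V_0 \subset Y$, which are nonempty $\GL$-stable open subvarieties. By $\GL$-equivariance of $f$ we have $f(V) = \GL \cdot f(V_0) \subset \GL \cdot U_0 = U$, so $f$ restricts to a morphism $V \to U$. The pair $(V_0, U_0)$ now witnesses directly that $f|_V \colon V \to U$ is locally $F$-elementary in the sense of \S\ref{s:elem}: $V_0, U_0$ are open affine $G(n)$-subschemes, $f$ carries $V_0$ into $U_0$, the induced map is $F$-elementary, and the global saturation conditions $V = \GL \cdot V_0$ and $U = \GL \cdot U_0$ hold tautologically.

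All the real content sits in the shift theorem; once that is in hand, the corollary is a bookkeeping translation between the intrinsic notion of ``generically $F$-elementary for the shifted action'' and the notion of ``locally $F$-elementary'' (which is by definition obtained from $G(n)$-stable pieces by $\GL$-saturation). The only step meriting any care is the identification of simple opens of $\Sh_n(X)$ with affine open $G(n)$-subschemes of $X$, but this is immediate from the description of the $\GL$-action on $\Sh_n(X)$ as restriction along $\GL \to G(n)$.
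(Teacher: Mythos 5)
Your proposal is correct and follows essentially the same route as the paper's own proof: apply the shift theorem, reinterpret the resulting simple opens of $\Sh_n(Y)$ and $\Sh_n(X)$ as affine open $G(n)$-subschemes of $Y$ and $X$, and take the $\GL$-saturations $V=\GL\cdot V_0$, $U=\GL\cdot U_0$, which by $\GL$-equivariance of $f$ witness local $F$-elementarity. No gaps; the bookkeeping identification of simple opens of the shift with $G(n)$-stable affine opens is exactly the point the paper also makes.
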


\begin{proof}
By the shift theorem, there exist $n \geq 0$ and simple open subsets
$V' \subset \Sh_n(Y)$ and $U' \subset \Sh_n(X)$ such that $\Sh_n f$
induces an $F$-elementary map $V' \to U'$. Regarded as subschemes of $Y$
and $X$, respectively, $V'$ and $U'$ are $G(n)$-subschemes but typically
not $\GL$-subschemes. Define $V:=\GL \cdot V' \subset Y$ and
$U':=\GL \cdot U' \subset X$. Then $f$ maps $V$ into $U$
and the restriction $f:V \to U$ is locally $F$-elementary. 
\end{proof}

\subsection{The unirationality theorem}

Recall that a finite dimensional variety $X$ is called \defn{unirational} if there is a dominant map $\bA^n \to X$ for some $n$. In \cite{polygeom}, we showed that if $X$ is a $\GL$-variety in characteristic~0 then there is a dominant map $B \times \bA(P) \to X$ for some finite dimensional variety $B$. We called this the ``unirationality theorem,'' since it shows that $X$ is unirational up to finite dimensional error (the $B$ part). The following is our version of the unirationality theorem in positive characteristic:

\begin{theorem} \label{thm:uni}
Let $X$ be an irreducible affine $\GL$-variety. Then there is a dominant
morphism $Y \to X^{(q)}$ for some characteristic power $q$, where $Y$
is an elementary $\GL$-variety.
\end{theorem}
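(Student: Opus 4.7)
The plan is to shift $X$ enough to expose an $F$-elementary open subvariety, and then use Proposition~\ref{prop:frob-factor} to replace the ``$F$-'' qualifier by an honest Frobenius twist on $X$.

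First, by Corollary~\ref{cor:xelt}, there exist $n \ge 0$ and a nonempty open affine $F$-elementary $\GL$-subvariety $U \subseteq \Sh_n(X)$. I next extract a dominant morphism $\pi \colon \Sh_n(X) \to X$ from the $G(n)$-equivariant inclusion $\bV \hookrightarrow K^n \oplus \bV$; this map has a section coming from the projection $K^n \oplus \bV \to \bV$ (which composes with the inclusion to the identity on $\bV$), so $\pi$ is surjective on points. Moreover $\Sh_n(X)$ is irreducible, since its coordinate ring $K[X]\{K^n \oplus \bV\}$ is a directed union, along injections, of the subrings $K[X]\{K^n \oplus V\}$ of the domain $K[X]\{\bV\}$. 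Hence $U$ is open and dense in $\Sh_n(X)$, and the restriction $\pi|_U \colon U \to X$ is dominant.

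Since $U$ is $F$-elementary, fix an $F$-equivalence $\phi \colon U \to U'$ with $U'$ elementary; in particular $K[U']$ is a reduced, finitely $\GL$-generated $\GL$-algebra, so Proposition~\ref{prop:frob-factor} applied to $\phi^* \colon K[U'] \to K[U]$ yields a characteristic power $q$ and a $\GL$-algebra map $g \colon K[U]^{(q)} \to K[U']$ with $\phi^* \circ g$ equal to the relative Frobenius on $K[U]$. Reducedness of $K[U]$ makes that Frobenius injective, hence $g$ is injective, and dually the corresponding morphism $\psi \colon U' \to U^{(q)}$ is dominant. The Frobenius twist of $\pi|_U$ is a dominant map $(\pi|_U)^{(q)} \colon U^{(q)} \to X^{(q)}$, because base change along the $q$-th power map $K \to K$ is flat and preserves injections of coordinate rings. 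Composing gives the desired dominant $\GL$-equivariant morphism
\[
U' \xrightarrow{\ \psi\ } U^{(q)} \xrightarrow{\ (\pi|_U)^{(q)}\ } X^{(q)},
\]
and taking $Y := U'$ proves the theorem.

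The only step that requires real care is the dominance of $\pi|_U$, which depends on both $\pi$ being dominant and $\Sh_n(X)$ being irreducible; without these the strategy of extracting an elementary approximation out of $\Sh_n(X)$ would collapse, and in particular Corollary~\ref{cor:xelt} would be useless here. Everything else is a fairly direct application of the earlier machinery: Corollary~\ref{cor:xelt} to locate the $F$-elementary open $U$, Proposition~\ref{prop:frob-factor} to trade the $F$-equivalence $\phi$ for an honest map into a Frobenius twist, and flatness of Frobenius twists over $K$ to preserve dominance in the final composition.
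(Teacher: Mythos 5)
Your proposal follows the same route as the paper's proof: Corollary~\ref{cor:xelt} to locate a nonempty open affine $F$-elementary $U \subseteq \Sh_n(X)$, the split inclusion $\bV \hookrightarrow K^n \oplus \bV$ to get a dominant map to $X$, and Proposition~\ref{prop:frob-factor} to trade the $F$-equivalence $U \to U'$ for a genuine map out of a Frobenius twist. The only packaging difference is that you factor through $U^{(q)}$ and the twisted map $(\pi|_U)^{(q)}$, while the paper writes the map directly via the inclusion $A^q \subseteq B'$ inside $K[U]$; these amount to the same thing.

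One justification is false as stated, although the step it supports survives. Reducedness of $K[U]$ does \emph{not} make the Frobenius $K[U]^{(q)} \to K[U]$ (twist relative to $K$) injective when $K$ is imperfect: for $K=\bF_p(t)$ and $B=K[x]/(x^p-t)$, a field, one has $B^{(p)} \cong K[y]/\bigl((y-t)^p\bigr)$ with the Frobenius sending $y \mapsto t$, so it has a nonzero (nilpotent) kernel; and this example genuinely occurs in your setting, since $\Spec(B)$ with trivial $\GL$-action is an irreducible affine $\GL$-variety which is $F$-elementary via the $F$-equivalence $K \hookrightarrow B$, with $U'=\Spec(K)$. So you cannot conclude that $g$ is injective. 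Fortunately you do not need injectivity: since $\phi^* \circ g$ equals the Frobenius map, $\ker(g)$ is contained in the kernel of $K[U]^{(q)} \to K[U]$, which consists of nilpotents (\S\ref{ss:frob}), hence lies in the nilradical of $K[U]^{(q)}$, and dominance of $\psi \colon U' \to U^{(q)}$ follows since a map of affine schemes is dominant exactly when the kernel of the ring map lies in the nilradical (keep in mind $U^{(q)}$ and $X^{(q)}$ may be non-reduced). With that one-line repair, the remaining steps---flatness of base change along $\lambda \mapsto \lambda^q$ preserving the injection $K[X] \to K[U]$, and dominance of composites---are fine. For what it is worth, the paper's own proof makes the analogous simplification when it asserts $A^{(q)} \cong A^q$ via Frobenius ``since $A$ is a domain,'' and is repaired in exactly the same way.
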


\begin{proof}
By Corollary~\ref{cor:xelt} there exists an $n$ such that $\Sh_n(X)$
contains a nonempty open affine $F$-elementary subvariety $Y$. Let $A$
be the coordinate ring of $X$. There is a natural surjective map $\Sh_n(X)
\to X$ of $\GL$-varieties corresponding to the injective map $A\{\bV\} \to
A\{K^n \oplus \bV\}$ coming from the inclusion $\bV \to K^n \oplus \bV,\
v \mapsto (0,v)$. The composition $Y \to \Sh_n(X) \to X$ is dominant,
and $Y \to \Spec(K)$ factors as $f_2 \circ f_1$ with $f_1:Y \to Y'$
an $F$-equivalence and $f_2:Y' \to \Spec(K)$ elementary. Let $B,B'$ be
the coordinate rings of $Y,Y'$, respectively, so that $B',A \subset B$.

By Proposition~\ref{prop:frob-factor}, $B^q \subset B'$ for some $q$,
and hence also the image of $A^q$ in $B$ is contained in $B'$. Now
$A^{(q)} \cong A^q$ via the Frobenius map (since $A$ is a domain), and
the inclusion $A^q \subset B'$ corresponds to a dominant morphism $Y'
\to X^{(q)}$ with $Y'$ elementary, as desired.
\end{proof}

See \cite[\S 5]{polygeom} for some corollaries of the unirationality theorem.

\subsection{The decomposition theorem} \label{ssec:Decomp}

Let $f \colon Y \to X$ be a morphism of quasi-affine $\GL$-varieties.
A \defn{locally elementary decomposition} (LED) of $f$ is a pair of
decompositions $Y=\bigsqcup_{j=1}^m Y_j$ and $X=\bigsqcup_{i=1}^n X_i$
where each $Y_j$ is an irreducible, locally closed $\GL$-subscheme in
$Y$ and similarly for the $X_i$; and such that for each $j$ there exists an
$i$ such that $f$ induces a locally $F$-elementary map $Y_j \to X_i$.
A locally elementary decomposition of a quasi-affine $\GL$-variety $Y$
itself is an LED of the map $Y \to \Spec(K)$.

The following theorem is our main structural result on $\GL$-varieties:

\begin{theorem} \label{thm:decomp}
Any map of quasi-affine $\GL$-varieties admits an LED.
\end{theorem}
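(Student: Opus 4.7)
The plan is to prove Theorem~\ref{thm:decomp} by $\GL$-noetherian induction on $Y$. At each step I produce an irreducible locally closed $\GL$-subscheme $V \subset Y$ together with an irreducible locally closed $\GL$-subscheme $U \subset X$ such that $V \to U$ is locally $F$-elementary, and then apply the inductive hypothesis to $Y \setminus V$. The essential inputs are Corollary~\ref{cor:locelt} (producing the locally $F$-elementary restriction on a $\GL$-stable open subset of a dominant map) and Proposition~\ref{prop:loceltbasechange} (preservation of locally elementary maps under base change, needed to merge the two partial LEDs into one). The induction is well-founded by Theorem~\ref{thm:noeth}.

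Here is the inductive step. If $Y$ is empty, take any LED of $X$ alone. Otherwise, by $\GL$-noetherianity $Y$ has finitely many irreducible components $C_1, \ldots, C_k$, and $W := C_1 \setminus \bigcup_{j \geq 2} C_j$ is a nonempty irreducible open subscheme of $C_1$, hence locally closed and $\GL$-stable in $Y$. Let $Z$ be the closure of $f(W)$ in $X$ endowed with its reduced structure, an irreducible closed $\GL$-subvariety, so that the induced map $W \to Z$ is dominant. Corollary~\ref{cor:locelt} then supplies nonempty $\GL$-stable open subsets $V \subset W$ and $U \subset Z$ such that $V \to U$ is locally $F$-elementary; here $V$ is irreducible and locally closed in $Y$, and $U$ is irreducible and locally closed in $X$. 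The complement $Y \setminus V$ is a proper closed $\GL$-subscheme of $Y$, so by the inductive hypothesis $f|_{Y \setminus V}$ admits an LED, say $Y \setminus V = \bigsqcup_j Y_j$ and $X = \bigsqcup_i X_i$, with each $Y_j$ mapping locally $F$-elementarily into some $X_{\iota(j)}$.

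To assemble the LED of $f$ I would merge the inductive LED with the single piece $V \to U$. First refine the $X$-stratification by intersecting each $X_i$ with $U$ and with $X \setminus U$, and then decomposing each intersection into irreducible locally closed $\GL$-subschemes (using $\GL$-noetherianity, exactly as in the reduction above). The result is a new disjoint stratification of $X$ in which $U$ appears as a union of strata $U_1, \ldots, U_r$. I then pull this refinement back along each map $Y_j \to X_{\iota(j)}$ and along $V \to U$: by Proposition~\ref{prop:loceltbasechange} every resulting piece is locally $F$-elementary over its image, and by $\GL$-noetherianity each such piece can be further decomposed into irreducible locally closed $\GL$-subschemes. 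Collecting everything produces the desired LED.

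The main obstacle is not any single deep step—the substantive content is packaged in Corollary~\ref{cor:locelt}, which itself rests on the shift and embedding theorems—but rather the combinatorial bookkeeping of simultaneously refining the $X$- and $Y$-stratifications while preserving irreducibility, local closedness, $\GL$-stability, and the $F$-elementary property on each piece. Proposition~\ref{prop:loceltbasechange} is precisely what makes the merging step compatible with the $F$-elementary structure; without it, the act of merging would risk destroying the very property one is trying to establish.
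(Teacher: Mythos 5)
Your overall shape---noetherian induction, carving out one locally $F$-elementary piece via Corollary~\ref{cor:locelt}, inducting on the complement, then merging---is the same as the paper's, but the merging step contains a genuine gap, and it is exactly the point the paper's proof is engineered to avoid. After refining the stratification of $X$ by $U$ and pulling back, Proposition~\ref{prop:loceltbasechange} does give that each piece $Y_j \cap f^{-1}(S) \to S$ is locally $F$-elementary; the problem is that these pieces need not be irreducible, and your last move---decomposing each of them into irreducible locally closed $\GL$-subschemes and ``collecting everything''---tacitly uses the principle that the restriction of a locally elementary map to an irreducible piece of its source is again locally elementary over the stratum. That principle is false. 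A locally elementary map is surjective (torsors, $0$-elementary maps and $F$-equivalences are surjective, and the image of $\GL \cdot V'$ is $\GL \cdot U'$, which is the whole target), whereas an irreducible component of the source need not dominate the stratum: for instance, with $S=\bA^1 \times \bA(\bV)$ and invariant coordinate $t$, the $0$-elementary map $S \sqcup S[1/t] \to S$ (base change of the smooth surjective $\bA^1 \sqcup (\bA^1\setminus\{0\}) \to \bA^1$) has the component $S[1/t]$, whose map to $S$ is a non-surjective open immersion and hence not locally elementary. Such reducible pieces with non-dominating components can genuinely arise as base changes of locally elementary maps with irreducible source along locally closed $\GL$-strata, and nothing in your setup excludes this; repairing it would force a further refinement of the target, restarting the same problem, so this is the crux rather than bookkeeping. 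The paper escapes the circle by a different organization: the recursive call is made with target the irreducible piece $X_0$ itself (not all of $X$), the locally elementary map $Y_0 \to X_0$ is restricted only over the unique stratum $X_{0i_0}$ that is open dense in $X_0$---so that its preimage in the irreducible $Y_0$ is open dense, hence irreducible, and locally elementary by base change---and everything lying over $X \setminus X_{0i_0}$ is handled by a second application of the noetherian induction hypothesis. Your single application of the hypothesis to $Y\setminus V \to X$ leaves no room for this maneuver.

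A secondary point: you apply Corollary~\ref{cor:locelt} to the dominant map $W \to Z$ of irreducible quasi-affine $\GL$-varieties, but it is stated for affine ones, and a quasi-affine $\GL$-variety need not contain a nonempty affine open $\GL$-subvariety. The remedy is the reduction the paper performs inline: pass to affine closures, choose nonzero functions $h_1$, $h_2$ vanishing on the boundaries, observe they are $G(n)$-invariant for some $n$, localize to obtain irreducible affine $G(n)$-varieties, apply the affine statement via the isomorphism $\GL \cong G(n)$, and then take $\GL \cdot (-)$ of the resulting opens (using that this preserves the locally elementary witnesses). This is routine but should be carried out rather than cited away, since it is exactly where the $G(n)$-versus-$\GL$ bookkeeping in the definition of locally elementary maps earns its keep.
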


\begin{proof}
Let $f:Y \to X$ be a map of quasi-affine $\GL$-varieties. In particular,
$Y$ is an open, dense $\GL$-subvariety of some affine $\GL$-variety
$Y'$. We proceed by noetherian induction on $Y'$. If $Y'$ is empty, then
we need only prove that $X$ admits a decomposition into irreducible,
locally closed $\GL$-subvarieties. This follows immediately from
noetherianity.

Otherwise, let $Y''$ be an irreducible component of $Y'$, and let $h_1$
be a nonzero function on $Y''$ that vanishes identically on $Y'' \setminus
Y$. Choose $n$ such that $h_1$ is $G(n)$-invariant.  Then $Y''[1/h_1]$
is an irreducible, open, affine $G(n)$-subvariety of $Y$.

Similarly, $X$ is a dense, open $\GL$-subvariety of some
affine $\GL$-variety $X'$. Let $X''$ be the closure in $X'$
of $f(Y''[1/h_1])$ and let $h_2$ be a nonzero function on $X''$ that
vanishes identically on $X'' \setminus X$. After increasing $n$ if
necessary, we may assume that $h_2$ is also $G(n)$-invariant, so that
$X''[1/h_2]$ is an irreducible, open affine $G(n)$-subvariety of $X$. 

Now $f$ induces a dominant morphism $Y''[1/(h_1 f^*(h_2))] \to X''[1/h_2]$
of irreducible affine $G(n)$-varieties.  By Corollary~\ref{cor:locelt}
there exist open $G(n)$-subvarieties $V \subset Y''[1/(h_1 f^*(h_2))]$ and
$U \subset X''[1/h_2]$ such that $f:V \to U$ is locally elementary
when regarded as a morphism of $\GL$-varieties via the isomorphism
$\GL \to G(n)$. Set $Y_0:=\GL \cdot V$ and $X_0:=\GL \cdot U$, so that
$X_0,Y_0$ are open $\GL$-subvarieties in $X,Y$, respectively, and $f:Y_0
\to X_0$ is locally elementary.

Set $Y_1:=(Y \setminus Y_0) \cap f^{-1}(X_0)$. Since $Y_1$ is contained in
a proper closed $\GL$-subvariety of $Y'$, the map $f:Y_1 \to X_0$
admits an LED, say $Y_1=\bigsqcup_j Y_{1j}$ and $X_0=\bigsqcup_i X_{0i}$. As
$X_0$ is irreducible, there is a unique $i_0$ for which $X_{0i_0}$ is open
in $X_0$. Now $Y_{0i_0} := f^{-1}(X_{0i_0}) \cap Y_0$ is open dense in $Y_0$,
and hence irreducible, 
and the restriction $f: Y_{0i_0} \to X_{0i_0}$ is locally elementary by
Proposition~\ref{prop:loceltbasechange}. Now combine  
\[ 
f: Y_{0i_0} \sqcup \bigsqcup_{j: f(Y_{1j})=X_{0i_0}} Y_{1j} \to X_{0i_0}
\]
with an arbitrary LED of the morphism 
\[ f: f^{-1}(X \setminus X_{0i_0}) \to X \setminus X_{0i_0}, \]
which, again, exists by the induction hypothesis. 
\end{proof}

\subsection{Chevalley's theorem}

Let $X$ be a $\GL$-variety. We say that a subset of $X$ is
\defn{$\GL$-constructible} if it is a finite union of locally closed
$\GL$-subvarieties. The following is our analog of Chevalley's theorem
for $\GL$-varieties:

\begin{theorem} \label{thm:chevalley}
Let $f \colon Y \to X$ be a map of quasi-affine $\GL$-varieties, and let $C$ be a $\GL$-constructible subset of $X$. Then $f(C)$ is a $\GL$-constructible subset of $Y$.
\end{theorem}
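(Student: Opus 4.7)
The plan is to combine the decomposition theorem (Theorem~\ref{thm:decomp}) with the observation that every locally $F$-elementary map is set-theoretically surjective; once these are in hand, the theorem follows by reading off the image from a decomposition. First I would reduce to the case $C = Y$: writing $C = \bigsqcup_{k=1}^{N} C_k$ as a finite disjoint union of locally closed $\GL$-subvarieties of $Y$, each $C_k$ is itself a quasi-affine $\GL$-variety, so it suffices to prove that $f(Y)$ is $\GL$-constructible in $X$ for any morphism $f \colon Y \to X$ of quasi-affine $\GL$-varieties; then $f(C) = \bigcup_{k} f(C_k)$ is a finite union of such sets.

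Next I would establish the key surjectivity lemma by running through the four classes of maps. A $0$-elementary map is surjective because $Y = Y_0 \times_{X_0} X$ with $Y_0 \to X_0$ surjective; a purely elementary map is a composition of torsors, and torsors are surjective; and an $F$-equivalence $A \to B$ induces a surjection on spectra by lying-over, since its kernel is nilpotent and each element of $B$ has some power in $\im(A)$, making the extension integral modulo nilpotents. Combining these shows every $F$-elementary map is surjective. For a locally $F$-elementary map $f \colon Y \to X$, pick witnessing open $G(n)$-subschemes $V \subset Y$ and $U \subset X$ with $f \colon V \to U$ an $F$-elementary (hence surjective) morphism and $Y = \GL \cdot V$, $X = \GL \cdot U$; then
\[
f(Y) = f(\GL \cdot V) = \GL \cdot f(V) = \GL \cdot U = X.
\]

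Finally, I apply Theorem~\ref{thm:decomp} to $f \colon Y \to X$ to obtain decompositions $Y = \bigsqcup_{j=1}^{m} Y_j$ and $X = \bigsqcup_{i=1}^{n} X_i$ into irreducible locally closed $\GL$-subvarieties such that each induced morphism $Y_j \to X_{i(j)}$ is locally $F$-elementary. By the surjectivity lemma each of these is surjective, so $f(Y) = \bigcup_{j} X_{i(j)}$ is a finite union of locally closed $\GL$-subvarieties of $X$, i.e., $\GL$-constructible by definition. The only step requiring genuine care is the surjectivity claim for $F$-equivalences; all the real structural work is already packaged inside Theorem~\ref{thm:decomp}, so this application should be quite short.
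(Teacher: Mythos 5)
Your argument is correct and is essentially the paper's own: the paper's proof consists of invoking the decomposition theorem and referring to \cite[Theorem~7.13]{polygeom}, where the same reasoning (decompose into strata, note that locally $F$-elementary maps are surjective since $F$-equivalences, purely elementary maps, and $0$-elementary maps all are, so each $Y_j$ maps onto its target stratum $X_{i(j)}$) is carried out. The only cosmetic caveat is that the identity $f(\GL\cdot V)=\GL\cdot f(V)$ should, for finite $K$, be justified via $\GL(\ol{K})$-translates or base change to $\ol{K}$, exactly as in the paper's discussion of $\GL\cdot U$.
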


\begin{proof}
This follows easily from the decomposition theorem; see \cite[Theorem~7.13]{polygeom} for details.
\end{proof}

\subsection{Lifting points} \label{ssec:Lifting}

We now investigate the problem of lifting points through maps of $\GL$-varieties. The following is our main result:

\begin{theorem} \label{thm:lift}
Let $f \colon Y \to X$ be a map of quasi-affine $\GL$-varieties. There
exists $d \ge 1$ and a characteristic power $q$ with the following
property. If $x$ is an $L$-point of $X$, for some extension $L/K$,
that belongs to the image of $f$, then there exists an extension field $M/L$ of degree $\le d$ such that $x$ lifts to a $M^{1/q}$-point of $Y$.
\end{theorem}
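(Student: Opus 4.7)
The plan is to apply the decomposition theorem (Theorem~\ref{thm:decomp}) to reduce to locally $F$-elementary maps, and then to lift through each such map by unpacking its structure. The decomposition theorem provides partitions $Y = \bigsqcup_j Y_j$ and $X = \bigsqcup_i X_i$ into irreducible locally closed $\GL$-subvarieties such that $f$ restricts to a locally $F$-elementary map $Y_j \to X_{\sigma(j)}$. An $L$-point $x$ in the image of $f$ lies in a unique $X_i$ and then in $f(Y_j)$ for some $j$ with $\sigma(j) = i$. Since the decomposition has only finitely many parts, it suffices to prove the theorem for each map $Y_j \to X_i$, taking the maxima of the resulting constants as the final $d$ and $q$.

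So assume $f \colon Y \to X$ is locally $F$-elementary: there are $n$ and affine open $G(n)$-subvarieties $V \subseteq Y$ and $U \subseteq X$ with $\GL \cdot V = Y$, $\GL \cdot U = X$, and $f|_V \colon V \to U$ $F$-elementary (under $\GL \cong G(n)$). Proposition~\ref{prop:GLU} gives a finite extension $\Omega/K$ with $X = \bigcup_{g \in \GL(\Omega)} gU$, so there is $g \in \GL(\Omega)$ with $x \in gU$ over the compositum $L' := L \cdot \Omega$, whose degree over $L$ is at most $[\Omega:K]$. By $\GL$-equivariance of $f$, lifting $x$ reduces to lifting $u := g^{-1}x \in U(L')$ through $f|_V \colon V \to U$ and then translating the result by $g$. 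Factor $f|_V = f_2 \circ f_1$ with $f_1$ an $F$-equivalence and $f_2$ elementary, and further $f_2 = b \circ a$ where $a \colon V' \to V'_0 \times_{U_0} U$ is purely elementary (a composition of $\bA(R_i)$-torsors) and $b$ is the base change of a smooth surjective morphism $V'_0 \to U_0$ of finite-type $K$-varieties. A standard noetherian induction on $U_0$ (using finite generically étale quasi-sections of smooth surjective morphisms) yields a uniform bound $d_0$ such that any $L'$-point $u_0$ of $U_0$ lifts to $V'_0$ over a finite extension $M/L'$ of degree $\leq d_0$; pairing this lift with $u$ gives a point of $V'_0 \times_{U_0} U$ over $M$. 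Lifting through $a$ requires no further field extension, since each $\bA(R_i)$-torsor is by definition Zariski-locally trivial and hence admits an $M$-section through any prescribed base point.

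Finally, lift through the $F$-equivalence $f_1 \colon V \to V'$. The $\GL$-variety $V'$ is reduced (being built from open subvarieties, smooth bases, and vector-group torsors), so Proposition~\ref{prop:frob-factor} applies: there exist a characteristic power $q$ and a $\GL$-algebra map $h^\# \colon K[V]^{(q)} \to K[V']$ with $f_1^\# \circ h^\# = \Phi$, the Frobenius on $K[V]$ relative to $K$. Given an $M$-point $\psi \colon K[V'] \to M$ of $V'$, define $\phi \colon K[V] \to M^{1/q}$ by $\phi(b) := \psi(h^\#(1 \otimes b))^{1/q}$. In characteristic $p$ the $q$-th root is additive, multiplicative, and compatible with scalars, so $\phi$ is a well-defined $K$-algebra map; and the relation $f_1^\# \circ h^\# = \Phi$ combined with injectivity of $f_1^\#$ (from reducedness of $K[V']$) gives $\phi \circ f_1^\# = \psi$. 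Translating the resulting point $\tilde v \in V(M^{1/q})$ by $g$ yields the desired lift $g\tilde v \in Y(M^{1/q})$ of $x$.

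The most delicate step is this last one, where the well-definedness of $\phi$ rests on the characteristic-$p$ identity $(a+b)^{1/q} = a^{1/q} + b^{1/q}$ and on the uniqueness of $q$-th roots. Uniformity of $d$ and $q$ across the original decomposition is immediate since only finitely many pieces contribute, but it is essential that Proposition~\ref{prop:frob-factor} supplies a single $q$ serving all elements of $K[V]$ at once, rather than an element-by-element $q$.
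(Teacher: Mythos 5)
Your proof is correct and takes essentially the same route as the paper: reduce via the decomposition theorem and Proposition~\ref{prop:GLU} (with $\GL$-equivariance) to an $F$-elementary chart $V \to U$, then lift successively through the $0$-elementary, purely elementary, and $F$-equivalence factors, the last via Proposition~\ref{prop:frob-factor} and $q$-th roots, exactly as in the paper's lemma. The differences are only cosmetic: you make explicit the compositum with $\Omega$ and the lift formula through the $F$-equivalence that the paper leaves implicit.
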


\begin{proof}
By the decomposition theorem, it suffices to treat the case where $f$ is
locally elementary. Choose $n$ and open affine $G(n)$-subvarieties $V
\subset Y$ and $U \subset X$ such that $f$ induces an $F$-elementary
map $V \to U$ and $X=\GL \cdot U$ and $Y=\GL \cdot V$. Now the restriction
$f:V \to U$ factors as $f_3 \circ f_2 \circ f_1$, where $f_1$ is an
$F$-equivalence, $f_2$ is purely elementary, and $f_3$ is 0-elementary,
and hence the result for $L$-points $x \in U$ follows from the ensuing
lemma. The result for $L$-points of $X$ now follows from Proposition~\ref{prop:GLU}.
\end{proof}

\begin{lemma}
Let $f \colon Y \to X$ be a map of irreducible $\GL$-varieties.
\begin{enumerate}
\item If $f$ is 0-elementary then there exists $d$ with the following property: if $x$ is an $L$-point of $X$, for some extension $L/K$, then there exists an extension $M/L$ of degree at most $d$ and an $M$-point $y$ of $Y$ such that $f(y)=x$.
\item If $f$ is purely elementary then every $L$-point of $X$ lifts to a $L$-point of $Y$, for any extension $L/K$.
\item If $f$ is an $F$-equivalence then there exists a characteristic power $q$ such that every $L$-point of $X$ lifts to a $L^{1/q}$-point of $Y$, for any extension $L/K$.
\end{enumerate}
\end{lemma}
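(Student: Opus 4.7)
The plan is to handle the three parts separately, with parts (b) and (c) being short and part (a) being the main work.

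\textit{Part (c).} Since $X$ and $Y$ are reduced, the $F$-injective map of coordinate rings $A := K[X] \to B := K[Y]$ is in fact injective, and by Proposition~\ref{prop:frob-factor} there is a characteristic power $q$ with $B^q \subset A$. Given an $L$-point $x \colon A \to L$, I would define $y \colon B \to L^{1/q}$ by letting $y(b)$ be the unique $q$-th root of $x(b^q)$ inside $L^{1/q}$. The freshman's dream gives $(y(b_1)+y(b_2))^q = y(b_1)^q + y(b_2)^q = x(b_1^q) + x(b_2^q) = x((b_1+b_2)^q) = y(b_1+b_2)^q$, and uniqueness of $q$-th roots in $L^{1/q}$ then forces additivity; multiplicativity and compatibility with $x|_A$ are analogous. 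This yields the required lift.

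\textit{Part (b).} Factor $f$ as a composition of $\bA(R_i)$-torsors; by induction on the length of the factorization it suffices to lift through a single $\bA(R)$-torsor. Zariski-local triviality of torsors produces an open $U \subset X$ containing the image point of $x \in X(L)$ over which $f^{-1}(U) \cong \bA(R) \times U$, and then the $L$-point $(0, x)$ of this product gives the desired lift in $Y$.

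\textit{Part (a).} Since $Y = Y_0 \times_{X_0} X$, lifting $x \in X(L)$ to an $M$-point of $Y$ is equivalent to lifting the induced $L$-point $x_0$ of $X_0$ through the smooth surjective map $f_0 \colon Y_0 \to X_0$ of ordinary finite-type $K$-varieties. I would prove this by Noetherian induction on $X_0$. At each inductive step, pick an irreducible component of $X_0$ with generic point $\eta$ and choose any closed point of the non-empty smooth fiber $Y_{0,\eta}$; its residue field is a finite separable extension of $\kappa(\eta)$ of some degree $d_\eta$. Taking the closure in $Y_0$ and restricting to a suitable dense open $U$ of that component yields a finite \'etale cover $Z \to U$ of degree $d_\eta$ equipped with an $X_0$-morphism $Z \to Y_0$. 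For any $L$-point $x_0 \in U(L)$, the fiber $Z \times_U \Spec(L)$ is a finite \'etale $L$-algebra of rank $d_\eta$, hence splits as a product $\prod_j L_j$ of separable extensions with $\sum_j [L_j : L] = d_\eta$; any factor provides an $L_j$-point of $Z$ (and thereby of $Y_0$) lifting $x_0$ with $[L_j : L] \le d_\eta$. Invoking the induction hypothesis on $f_0$ restricted to the complement $X_0 \setminus \bigcup U$ (which is again smooth and surjective) yields finitely many degree bounds whose maximum is the required $d$. The main technical point is verifying that the induction hypothesis applies after restricting to the closed complement, which it does because smoothness is preserved by base change and surjectivity of $f_0$ onto $X_0$ forces surjectivity onto the complement.
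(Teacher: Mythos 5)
Your proposal is correct and follows essentially the same route as the paper: the same reduction to $Y_0 \to X_0$ in (a), local triviality of the torsors in (b), and Proposition~\ref{prop:frob-factor} (equivalently $B^q \subset A$) in (c). The only difference is that you spell out, via Noetherian induction and spreading out, the finite-dimensional lifting fact for surjective morphisms that the paper simply asserts, and you phrase (c) with explicit $q$-th roots rather than through the Frobenius twist $Y^{(q)}$; both are routine elaborations of the same argument.
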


\begin{proof}
(a) Since $Y=Y_0 \times_{X_0} X$, to lift a point $x \in X(L)$ to $Y$ is equivalent to lifting its image $x_0 \in X_0(L)$ to $Y_0$. Since $Y_0 \to X_0$ is a surjective map of finite dimensional varieties, we see that $d$ exists.

(b) Ignoring the $\GL$-action, $Y$ is locally isomorphic to a product of $X$ with an affine space. Thus the claim is clear.

(c) By Proposition~\ref{prop:frob-factor} there is a characteristic power $q$ and a map $g \colon X \to Y^{(q)}$ such that $g \circ f$ is the Frobenius map $F \colon Y \to Y^{(q)}$. Let $x$ be a $K$-point of $X$. Then $g(x)$ is a $K$-point of $Y^{(q)}$. This lifts to a $K^{1/q}$-point $y$ of $Y$ satisfying $f(y)=x$.
\end{proof}

Recall that $K$ is called \defn{semi-perfect} if it has characteristic~0, or positive characteristic $p$ and $[K:K^p]$ is finite. In this case, we obtain a stronger result:

\begin{corollary} \label{cor:lift}
Suppose $K$ is semi-perfect and let $f \colon Y \to X$ be a map of $\GL$-varieties. Then there exists $e \ge 1$ with the following property: if $x$ is an $L$-point of $X$, for some algebraic extension $L/K$, that belongs to the image of $f$ then there exists an extension $M/K$ of degree at most $e$ such that $x$ lifts to an $M$-point of $Y$.
\end{corollary}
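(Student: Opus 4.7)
The plan is to apply Theorem~\ref{thm:lift} and then use the semi-perfect hypothesis to absorb the resulting Frobenius twist into a controlled finite extension. Theorem~\ref{thm:lift} furnishes a constant $d \ge 1$ and a characteristic power $q$ such that every $L$-point $x$ in the image of $f$ lifts to an $M_0^{1/q}$-point of $Y$ with $[M_0:L] \le d$. What remains is to bound the purely inseparable degree $[M_0^{1/q}:M_0]$ by a constant depending only on $K$ and $q$, and then assemble the pieces.

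In characteristic zero, $q=1$ and there is nothing to do; I would take $e := d$. In characteristic $p>0$, write $[K:K^p]=p^r$ with $r$ finite. I will invoke the classical fact that the degree of imperfection does not increase along a finite algebraic extension of $K$: for any finite $F/K$ one has $[F:F^p] \le p^r$ (a $p$-basis of $K$ remains a $p$-spanning set of $F$; cf.\ Bourbaki, \emph{Alg\`ebre}, Ch.~V). Iterating Frobenius gives $[F:F^q] \le q^r$, and the Frobenius isomorphism identifies this degree with $[F^{1/q}:F]$. Applying this with $F = M_0$ yields
\[
[M_0^{1/q}:L] \;=\; [M_0^{1/q}:M_0]\cdot[M_0:L] \;\le\; q^r\cdot d,
\]
so setting $M := M_0^{1/q}$ and $e := dq^r$ delivers the required lift of $x$ inside $Y(M)$, with the degree of $M$ over the field of definition of $x$ bounded by $e$.

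The one nontrivial step, and thus the main obstacle, is the uniform bound $[F:F^p]\le[K:K^p]$ for finite algebraic $F/K$, which is precisely where semi-perfectness enters essentially. Example~\ref{ex:nonsemiperfect} dramatizes that without this hypothesis no uniform $e$ can exist: there one has $\astr(f)=1$, yet witnessing absolute strength one over $K$ requires adjoining independent $p$-th roots of the transcendence parameters $t_1,t_2,\ldots$, so $[K^{1/p}:K]$ is infinite and the Frobenius twist in Theorem~\ref{thm:lift} cannot be absorbed. The whole argument therefore hinges on converting the ``$M^{1/q}$'' of Theorem~\ref{thm:lift} into an honest finite extension via the imperfection bound, a step that is valid exactly under the semi-perfect hypothesis.
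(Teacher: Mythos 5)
Your proposal follows essentially the same route as the paper: apply Theorem~\ref{thm:lift} to get a lift over $M_0^{1/q}$ with $[M_0:L]\le d$, then use semi-perfectness to bound the purely inseparable degree, exactly as the paper does via its two lemmas following the corollary (the paper factors the bound as $[M^{1/q}:L]=[M:L][L:L^q]\le d\,c^k$, you factor through $[M_0^{1/q}:M_0]=[M_0:M_0^q]$; both give $e=dq^r$).

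Two small points in your justification deserve repair. First, the parenthetical reason you give for the imperfection bound is not correct as stated: a $p$-basis of $K$ need not remain a $p$-spanning set of a finite extension $F$ when $F/K$ is inseparable (e.g.\ $K=\bF_p(t)$, $F=K(t^{1/p})$: then $F^p[t]=K\ne F$). The bound $[F:F^p]\le[K:K^p]$ is nonetheless true; the paper proves it for finite extensions by the degree count $[F:K^p]=[F:K][K:K^p]=[F:F^p][F^p:K^p]$ together with $[F^p:K^p]=[F:K]$, which even gives equality. Second, you invoke the fact only for \emph{finite} extensions of $K$, but you apply it to $M_0$, which is finite over $L$ while $L/K$ is merely algebraic and possibly of infinite degree; so you need the statement for arbitrary algebraic extensions of $K$. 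The paper supplies this with a short extra argument (any $c+1$ elements of $L$ lie in a finite subextension $M/K$ with $[M:M^p]=c$, forcing a dependence over $L^p$), and you should include something equivalent. With these two repairs your argument is complete and coincides with the paper's.
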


\begin{proof}
This follows from Theorem~\ref{thm:lift} and the subsequent lemmas.
\end{proof}

\begin{lemma}
Let $K$ be a semi-perfect field of positive characteristic $p$. Then for any algebraic extension $L/K$, we have $[L:L^p] \le [K:K^p]$. In particular, $L$ is semi-perfect.
\end{lemma}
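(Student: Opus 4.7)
The plan is to reduce the (possibly infinite) algebraic case to the finite subextension case, where a short degree computation using the Frobenius isomorphism yields the bound.

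First I would handle any finite intermediate extension $K \subseteq M \subseteq L$. The key observation is that the $p$-th power map $F \colon M \to M^p$ is a field isomorphism carrying $K$ onto $K^p$, and hence induces $[M^p:K^p] = [M:K]$. Since $[K:K^p]$ is finite and $[M:K]$ is finite, the degree $[M:K^p]$ is finite too, so $[M:M^p]$ is finite. Multiplicativity of degrees in the two towers $K^p \subseteq K \subseteq M$ and $K^p \subseteq M^p \subseteq M$ then gives
$$[M:M^p]\cdot[M:K] \;=\; [M:M^p]\cdot[M^p:K^p] \;=\; [M:K^p] \;=\; [M:K]\cdot[K:K^p],$$
and cancelling $[M:K]$ yields $[M:M^p] = [K:K^p]$.

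To pass to the full algebraic extension, set $r := [K:K^p]$ and take any $r+1$ elements $\alpha_0,\dots,\alpha_r \in L$. They lie in the finite subextension $M := K(\alpha_0,\dots,\alpha_r)$, which satisfies $[M:M^p] = r$ by the previous step. Hence the $\alpha_i$ are $M^p$-linearly dependent, and since $M^p \subseteq L^p$ this dependence persists over $L^p$. This shows $[L:L^p] \le r = [K:K^p]$; in particular $[L:L^p]$ is finite, so $L$ is semi-perfect.

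There is no serious obstacle; the only subtlety worth flagging is the use of Frobenius to identify the pair $(M,K)$ with $(M^p,K^p)$, which converts the inaccessible degree $[M^p:K^p]$ into the familiar $[M:K]$. After that the finite case is a one-line degree chase, and the infinite-algebraic case follows from the elementary fact that a putative linear relation involves only finitely many scalars and thus can be tested inside a finitely generated subextension.
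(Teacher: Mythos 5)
Your proof is correct and follows essentially the same route as the paper: the finite case via the Frobenius isomorphism giving $[M^p:K^p]=[M:K]$ plus multiplicativity of degrees in the two towers over $K^p$, and the general algebraic case by taking $[K:K^p]+1$ elements, passing to the finite subextension they generate, and observing that the resulting dependence over $M^p\subseteq L^p$ persists over $L^p$. No gaps.
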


\begin{proof}
Let $c=[K:K^p]$. First suppose that $L/K$ is finite. We have
\begin{displaymath}
[L:K^p]=[L:K][K:K^p] = [L:L^p][L^p:K^p].
\end{displaymath}
Since $L^p/K^p$ is isomorphic to $L/K$, the two extensions have the same degree, and so we find $[L:L^p]=c$ in this case.

We now treat the general case. Let $x_1, \ldots, x_{c+1}$ be elements of $L$, and let $M/K$ be the finite extension they generate. By the previous paragraph, $[M:M^p]=c$, and so there is a non-trivial linear dependence of the $x_i$'s with coefficients in $M^p \subset L^p$. Thus $[L:L^p] \le c$, as required. (Note: we can have $[L:L^p]<c$, e.g., if $L=\ol{K}$.)
\end{proof}

\begin{lemma}
Let $K$ be a semi-perfect field of positive characteristic $p$, let $d \ge 1$, and let $q$ be a characteristic power. Then there exists $e \ge 1$ with the following property: if $L/K$ is an algebraic extension and $M/L$ is an extension of degree at most $d$ then $[M^{1/q}:L] \le e$.
\end{lemma}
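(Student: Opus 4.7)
The plan is to show that $e := d \cdot c^n$ works, where $c := [K:K^p]$ (finite by assumption) and $n$ is the integer with $q = p^n$. First I would reduce to bounding $[M^{1/q}:M]$, since $[M^{1/q}:L] = [M^{1/q}:M] \cdot [M:L]$ and $[M:L] \le d$ by hypothesis.

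The key identity I would invoke is $[M^{1/q}:M] = [M:M^q]$, which comes from the fact that the $q$-th power map $x \mapsto x^q$ is a field isomorphism $M^{1/q} \xrightarrow{\sim} M$ (its image equals $(M^{1/q})^q = M$ by the very definition of $M^{1/q}$) and carries the subfield $M \subset M^{1/q}$ onto the subfield $M^q \subset M$. To control $[M:M^q]$ I would telescope through the tower
\[
M^q = M^{p^n} \subset M^{p^{n-1}} \subset \cdots \subset M^p \subset M,
\]
using that each link $[M^{p^i}:M^{p^{i+1}}]$ equals $[M:M^p]$ via the $p^i$-th power isomorphism $M \xrightarrow{\sim} M^{p^i}$, which carries $M^p$ to $M^{p^{i+1}}$. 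This gives $[M:M^q] = [M:M^p]^n$.

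Finally, since $L/K$ is algebraic and $M/L$ is finite, $M/K$ is algebraic, so the preceding lemma applied directly to $M/K$ yields $[M:M^p] \le [K:K^p] = c$. Combining, $[M^{1/q}:L] \le d \cdot c^n$, so $e := d c^n$ has the desired property. There is no real obstacle here; the only points worth stating carefully are the Frobenius-duality identity $[M^{1/q}:M] = [M:M^q]$ and the observation that $M/K$ is itself algebraic, which lets us quote the preceding lemma directly for $M$ rather than having to propagate the bound on $[\,\cdot\,:(\,\cdot\,)^p]$ one further step up the tower.
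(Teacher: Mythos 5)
Your proposal is correct and takes essentially the same route as the paper: both convert the root extension into a power extension via the Frobenius isomorphism, telescope through the $p$-power tower to get a factor $c^n$, invoke the preceding lemma, and arrive at $e = d\,c^n$. The only (harmless) difference is where you split the tower $L \subset M \subset M^{1/q}$: you bound $[M^{1/q}:M]=[M:M^q]$ and apply the preceding lemma to $M/K$ (after noting $M/K$ is algebraic), whereas the paper writes $[M^{1/q}:L]=[M:L^q]=[M:L][L:L^q]$ and applies the lemma to $L$ directly.
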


\begin{proof}
Let $c=[K:K^p]$ and write $q=p^k$. By the previous lemma, we have $[L:L^p] \le c$. Since $L^{p^i}/L^{p^{i+1}}$ is isomorphic to $L/L^p$, it also has degree $\le c$, and so
\begin{displaymath}
[L:L^q]=[L:L^p] [L^p:L^{p^2}]\cdots [L^{p^{k-1}}:L^{p^k}] \le c^k.
\end{displaymath}
We thus have
\begin{displaymath}
[M^{1/q}:L]=[M:L^q]=[M:L][L:L^q] \le d c^k,
\end{displaymath}
and so we can take $e=dc^k$.
\end{proof}

\begin{example}
In Corollary~\ref{cor:lift}, the condition that $L$ be semi-perfect is necessary, as the following example shows. Take $K=\bF_p$ and let $f \colon X \to X^{(p)}$ be the Frobenius map, with $X=\bA(\bV)$. Let $L=\bF_p(t_1, t_2, \ldots)$. Then the $L$-point $x=(t_1, t_2, \ldots)$ of $X^{(p)}$ has a unique lift to $X$, namely $y=(t_1^{1/p}, t_2^{1/p}, \ldots)$. The point $y$ is not defined over any finite extension of $L$.
\end{example}

\section{Application to strength} \label{s:strength}

Fix $d \ge 2$. Let $K[x_1, \ldots, x_n]_a$ denote the set of homogeneous forms of degree $a$. Recall that the \defn{strength} of $f \in K[x_1, \dots, x_n]_d$, denoted $\str(f)$, is the minimal $s$ such that
\begin{displaymath}
f = \sum_{i=1}^s g_i \cdot h_i,
\end{displaymath}
where $g_i$ and $h_i$ are homogeneous forms in $K[x_1, \ldots, x_n]$ of degrees $<d$. For an extension field $L/K$, we write $\str_L(f)$ for the strength of $f$ regarded as an element of $L[x_1, \ldots, x_n]_d$. We define the \defn{absolute strength} of $f$, denoted $\astr(f)$, to be $\str_{\ol{K}}(f)$.

The purpose of \S \ref{s:strength} is to prove Theorem~\ref{thm:astr2}, which we restate here:

\begin{theorem} \label{thm:strength}
Suppose $K$ is semi-perfect. Given any $s$ there is some $s'$ (depending only on $K$, $d$, and $s$) such that if $f \in K[x_1, \ldots, x_n]_d$ satisfies $\str(f)>s'$ then $\astr(f)>s$.
\end{theorem}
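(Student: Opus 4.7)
The plan is to model strength geometrically as the image of a $\GL$-equivariant multiplication map, apply Corollary~\ref{cor:lift} to lift a $\bar K$-decomposition down to a decomposition over a finite extension of bounded degree, and then use ordinary linear algebra to descend further to $K$ at polynomial cost.

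First I would set up, for each $s \ge 1$, the polynomial representation
$$P_s := \bigoplus_{i=1}^s \bigoplus_{a=1}^{d-1} \bigl(\Sym^a(\bV) \oplus \Sym^{d-a}(\bV)\bigr),$$
the $\GL$-variety $Y_s := \bA(P_s)$, and the $\GL$-equivariant multiplication morphism $m_s \colon Y_s \to X := \bA(\Sym^d(\bV))$ sending a tuple $((g_{i,a}, h_{i,a}))_{i,a}$ to $\sum_{i,a} g_{i,a}h_{i,a}$. For any field extension $E/K$, a form $f \in E[x_1,\ldots,x_n]_d$ with $\str_E(f) \le s$ lifts to an $E$-point of $Y_s$ (set superfluous entries to zero), and conversely every $E$-point in the image of $m_s$ has strength at most $s(d-1)$ over $E$. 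In particular, a $K$-point $f$ of $X$ satisfies $\astr(f) \le s$ if and only if $f$ lies in the image of $m_s$ viewed as a subset of the underlying scheme $X$, since this can be tested on $\bar K$-points.

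Next, assuming $\astr(f) \le s$, I would apply Corollary~\ref{cor:lift} to $m_s$: since $K$ is semi-perfect, it yields a constant $e = e(K, d, s) \ge 1$ and a finite extension $M/K$ with $[M:K] \le e$ such that $f$ lifts to an $M$-point of $Y_s$, i.e.\ $\str_M(f) \le s(d-1)$. I would then descend to $K$ by a routine linear-algebra argument: fix a $K$-basis $\alpha_1, \ldots, \alpha_e$ of $M$ with $\alpha_1 = 1$ and structure constants $\alpha_j\alpha_k = \sum_l c_{jkl}\alpha_l$ in $K$; expanding the factors of each term of the $M$-decomposition in this basis, multiplying out, and reading off the coefficient of $\alpha_1$ in the resulting expression (which must equal $f$ since $f \in K$) exhibits $f$ as a $K$-linear combination of at most $s(d-1)e^2$ products of $K$-forms of degrees strictly less than $d$. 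Thus $s' := s(d-1)e^2$ suffices.

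The substantive content is entirely upstream, in Corollary~\ref{cor:lift} and in the decomposition theorem (Theorem~\ref{thm:decomp}) on which it rests. The remaining delicate points in the application itself are minor: one must verify that `$f$ lies in the image of $m_s$' is interpreted as a condition on the underlying $\GL$-scheme, so that $f$ being in the image over $\bar K$ supplies the hypothesis of Corollary~\ref{cor:lift}; and one must check that the $\GL$-equivariant image of $m_s$ really captures the forms of bounded absolute strength. Both are immediate from the explicit description of the $\bar K$-points of $m_s$.
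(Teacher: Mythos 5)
Your proposal is correct and follows essentially the same route as the paper: encode bounded strength as membership in the image of a $\GL$-equivariant multiplication map into $\bA(\Sym^d(\bV))$, invoke Corollary~\ref{cor:lift} (resting on the decomposition theorem) to lift a $\ol{K}$-decomposition to a point over an extension $M/K$ of bounded degree, and then descend from $M$ to $K$ by elementary linear algebra. The only deviations are cosmetic: you use a single affine space $\bA(P_s)$ where the paper uses the disjoint-union model $Y^s$ (costing a harmless factor $d-1$), and your structure-constant descent yields $\str_K(f) \le e^2 \cdot \str_M(f)$ where the paper's linear-system argument gives $\str_K(f) \le e \cdot \str_M(f)$; neither difference affects the qualitative existence of $s'$.
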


We now begin the proof. For $a \ge 0$, put
\begin{displaymath}
S_a = \bA(\Sym^a(\bV)).
\end{displaymath}
We identify $K$-points of $S_a\{K^n\}$ with elements of $K[x_1, \ldots, x_n]_a$. There are natural maps of $\GL$-varieties
\begin{displaymath}
S_a \times S_a \to S_a, \qquad S_a \times S_b \to S_{a+b}
\end{displaymath}
given by addition and multiplication. Put
\begin{displaymath}
X=S_d, \qquad Y=\coprod_{e=1}^{d-1} (S_e \times S_{d-e})
\end{displaymath}
and let $\theta^0 \colon Y \to X$ be the map induced by multiplication. Define
\begin{displaymath}
\theta_s \colon Y^s \to X, \qquad \theta_s(y_1, \ldots, y_s) = \theta^0(y_1)+\cdots+\theta^0(y_s).
\end{displaymath}
This is a map of $\GL$-varieties. Observe that $f \in K[x_1, \ldots, x_n]_d$ satisfies $\str_L(f) \le s$ if and only if $f$ is the image of an $L$-point of $Y^s\{K^n\}$ under $\theta_s$. In fact, we can verify this at infinite level too:

\begin{lemma}
Let $f \in K[x_1, \ldots, x_n]_d$, and regard $f$ as a $K$-point of $X$. Let $L/K$ be an extension. Then $\str_L(f) \le s$ if and only if $f$ is the image of an $L$-point of $Y^s$ under $\theta_s$.
\end{lemma}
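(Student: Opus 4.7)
The plan is to unpack what $L$-points of $Y^s$ and $X$ actually are, handle the forward direction by padding, and handle the converse by specializing the extra variables to zero. First I would note that an $L$-point of $S_a=\bA(\Sym^a\bV)$ is a $K$-linear functional $\Sym^a\bV\to L$, equivalently an $L$-valued formal sum $\sum_{|\alpha|=a}c_\alpha x^\alpha$ of degree-$a$ monomials in $x_1,x_2,\ldots$ with arbitrary $L$-coefficients; the multiplication map $S_e\times S_{d-e}\to S_d$ sends $(g,h)$ to the formal product $gh$, which is well-defined as a formal sum of degree $d$ because each monomial of degree $d$ factors in only finitely many ways. Thus an $L$-point of $Y^s$ is a tuple $((g_i,h_i))_{i=1}^s$ of pairs of such formal sums of positive degrees $e_i$ and $d-e_i$ summing to $d$, and $\theta_s$ sends it to $\sum_{i=1}^s g_ih_i$.

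For the forward direction, if $\str_L(f)\le s$ then I would write $f=\sum_{i=1}^{s'}g_ih_i$ with $s'\le s$ and $g_i,h_i\in L[x_1,\ldots,x_n]$ nonzero homogeneous forms of positive degrees summing to $d$, then pad with $(g_i,h_i)=(0,0)\in(S_1\times S_{d-1})(L)$ for $s'<i\le s$ to produce the required $L$-point of $Y^s$ mapping to $f$.

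For the converse, the natural inclusion $K^n\hookrightarrow\bV$ induces for each $a$ a morphism of schemes $\rho_a\colon S_a\to S_a\{K^n\}$ whose effect on $L$-points is precisely to set $x_j\mapsto 0$ for $j>n$. Because the comultiplications on $\Sym\bV$ and on $\Sym K^n$ are natural in the underlying vector space, the $\rho_a$ intertwine the multiplication maps $S_e\times S_{d-e}\to S_d$ and $S_e\{K^n\}\times S_{d-e}\{K^n\}\to S_d\{K^n\}$. Applying $\rho_d$ to the identity $\sum_i g_ih_i=f$ thus yields $\sum_i \rho_{e_i}(g_i)\cdot\rho_{d-e_i}(h_i)=\rho_d(f)=f$, the last equality because $f$ already lies in $L[x_1,\ldots,x_n]$. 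Each $\rho_{e_i}(g_i)$ and $\rho_{d-e_i}(h_i)$ is an honest homogeneous form in $L[x_1,\ldots,x_n]$ of positive degree $<d$ (or zero), and discarding the indices $i$ for which the product vanishes gives $\str_L(f)\le s$.

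The only mildly nontrivial step is the one just described: an $L$-point of $S_a$ is a priori a formal sum with possibly infinitely many nonzero coefficients, so the content of the converse is that the retraction $\rho$ converts the abstract decomposition coming from $Y^s(L)$ into a finite one over $L[x_1,\ldots,x_n]$, which is possible at no cost because $f$ itself involves only the variables $x_1,\ldots,x_n$.
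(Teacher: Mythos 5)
Your proposal is correct and follows essentially the same route as the paper: the paper's proof uses the pair $i \colon K^n \to \bV$, $\pi \colon \bV \to K^n$ with $\pi \circ i = \id$ and the resulting commutative squares, and your truncation maps $\rho_a$ are exactly the paper's $i^*$, while your tacit inclusion of $L[x_1,\ldots,x_n]_a$ into formal sums in all variables is the paper's $\pi^*$. The only difference is presentational: you spell out the finite-level identification of $\theta_s$-images with strength decompositions (stated as an observation in the paper) in explicit coordinates, which is fine.
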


\begin{proof}
Choose linear maps $i \colon K^n \to \bV$ and $\pi \colon \bV \to K^n$ satisfying $\pi \circ i = \id$. Consider the diagram
\begin{displaymath}
\xymatrix{
Y^s \ar[r] \ar@<2pt>[d]^{i^*} & X \ar@<2pt>[d]^{i^*} \\
Y^s\{K^n\} \ar@<2pt>[u]^{\pi^*} \ar[r] & X\{K^n\} \ar@<2pt>[u]^{\pi^*} }
\end{displaymath}
where the horizontal maps are induced by $\theta^s$. Both squares commute, and we have $i^* \circ \pi^*=\id$ for both vertical maps. Let $f'=\pi^*(f)$; this is how we regard $f$ as a $K$-point of $X$. If $f=\theta_s(y)$ for an $L$-point $y$ of $Y^s\{K^n\}$ then $f'=\theta_s(y')$ where $y'=\pi^*(y)$ is an $L$-point of $Y^s$. Similarly, if $f'=\theta_s(y')$ for an $L$-point $y'$ of $Y^s$ then $f=\theta_s(y)$ where $y=i^*(y')$ is an $L$-point of $Y^s\{K^n\}$.
\end{proof}

We require the following observation on strength:

\begin{lemma}
Let $f \in K[x_1, \ldots, x_n]_d$ and let $L/K$ be an extension of degree $e$. Then
\begin{displaymath}
\str_K(f) \le e \cdot \str_L(f).
\end{displaymath}
\end{lemma}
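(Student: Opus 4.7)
The plan is to take an optimal strength decomposition of $f$ over $L$ and collapse it to a decomposition over $K$ whose length is at most $e$ times as long. Set $s = \str_L(f)$ and write
\[
f = \sum_{i=1}^{s} g_i h_i
\]
with $g_i, h_i \in L[x_1,\ldots,x_n]$ homogeneous of positive degrees strictly less than $d$.

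The tool for descending to $K$ is any $K$-linear projection $\pi\colon L \to K$ with $\pi(1)=1$. Concretely, I would pick a $K$-basis $b_1,\ldots,b_e$ of $L$ with $b_1=1$ and let $\pi$ be the coordinate projection onto $b_1$. Extend $\pi$ coefficient-wise to a $K$-linear map $L[x_1,\ldots,x_n] \to K[x_1,\ldots,x_n]$, still written $\pi$; it preserves homogeneity degree, restricts to the identity on $K[x_1,\ldots,x_n]$, and satisfies the ``Frobenius-type'' identity $\pi(ab)=\pi(a)\,b$ whenever $b \in K[x_1,\ldots,x_n]$.

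Now expand each $h_i = \sum_{k=1}^{e} b_k h_{ik}$ with $h_{ik} \in K[x_1,\ldots,x_n]$ homogeneous of the same degree as $h_i$. Since $f \in K[x_1,\ldots,x_n]$ we have $f = \pi(f)$, and applying $\pi$ termwise together with the identity above yields
\[
f \;=\; \sum_{i=1}^{s}\pi(g_i h_i) \;=\; \sum_{i=1}^{s}\sum_{k=1}^{e} \pi(b_k g_i)\,h_{ik}.
\]
Each factor $\pi(b_k g_i)$ lies in $K[x_1,\ldots,x_n]$ and is homogeneous of the same positive degree $<d$ as $g_i$ (when nonzero), and similarly for $h_{ik}$. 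Dropping any zero summands, this exhibits $f$ as a sum of at most $se$ products of pairs of $K$-forms of positive degrees $<d$, giving $\str_K(f) \le e\cdot \str_L(f)$.

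There is no real obstacle here; the only point deserving attention is verifying that $\pi$ preserves homogeneity and commutes with multiplication by $K$-forms, so that the output really is a legitimate strength decomposition in the sense of the definition (positive-degree factors in $K[x_1,\ldots,x_n]$ of degrees strictly less than $d$).
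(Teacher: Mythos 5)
Your proof is correct. It follows the same basic strategy as the paper---expand one factor of each product in a $K$-basis of $L$ and descend to a decomposition over $K$ with at most $e$ times as many summands---but the descent mechanism differs. The paper expands $g_i = \sum_j g_{i,j}\xi_j$ and then treats the identity $f=\sum_{i,j} g_{ij}\cdot(\xi_j h_i)$ as a $K$-linear system in the coefficients of the second factors, invoking the fact that a linear system over $K$ with a solution over $L$ has a solution over $K$ to replace the $\xi_j h_i$ by $K$-forms $\tilde h_{ij}$. You instead expand $h_i=\sum_k b_k h_{ik}$ and apply an explicit $K$-linear retraction $\pi\colon L\to K$ with $\pi(1)=1$, extended coefficient-wise, using $\pi(ab)=\pi(a)b$ for $b\in K[x_1,\ldots,x_n]$ to get $f=\pi(f)=\sum_{i,k}\pi(b_k g_i)h_{ik}$ directly. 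Your route is slightly more constructive (it produces the $K$-decomposition explicitly rather than appealing to solvability of a linear system), while the paper's avoids choosing a projection; both yield the same bound $\str_K(f)\le e\cdot\str_L(f)$, and your checks on homogeneity, degrees, and discarding zero summands are exactly the points that need verifying.
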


\begin{proof}
Suppose $\str_L(f) \le s$, and write
\begin{displaymath}
f = \sum_{i=1}^s g_i h_i,
\end{displaymath}
as usual. Let $\xi_1, \ldots, \xi_e$ be a $K$-basis for $L$
and write
$g_i = \sum_{j=1}^e g_{i,j} \xi_j$, 
where each $g_{i,j}$ is a homogeneous form in $K[x_1, \ldots, x_n]$ of the
same degree as $g_i$. 
Then we have 
\[ f=\sum_{i=1}^s \sum_{j=1}^e g_{ij} \cdot (\xi_j h_i). \]
This can be read as a system of linear equations over $K$ of which the
tuple $(\xi_j h_i)_{i,j}$ is a solution over $L$. But then the system also has
a solution $(\tilde{h}_{ij})_{i,j}$ over $K$, and we find $\str_K(f)
\leq es$. 
\end{proof}

We are now ready to prove the theorem.

\begin{proof}[Proof of Theorem~\ref{thm:strength}]
By Corollary~\ref{cor:lift} there is some $e \ge 1$ such that any $K$-point of $X$ in the image of $\theta_s$ lifts to an $M$-point of $Y^s$, for some extension field $M/K$ of degree $\le e$. Suppose now that $f \in K[x_1, \ldots, x_n]_d$ satisfies $\astr(f) \le s$. Then $f$ is the image of a $\ol{K}$-point under $\theta^s$. Thus it is the image of an $M$-point, for some $M/K$ with $[M:K] \le e$. We thus find
\begin{displaymath}
\str_K(f) \le e \cdot \str_M(f) \le e s.
\end{displaymath}
We can therefore take $s'=e s$: indeed the contrapositive of the above discussion shows that $\str(f)>s'$ implies $\astr(f)>s$.
\end{proof}

\end{document}